\newtheorem{theorem}{Theorem}
\newtheorem{proposition}{Proposition}[section]
\newtheorem{lemma}[proposition]{Lemma}
\newtheorem{corollary}[proposition]{Corollary}
\newtheorem*{claim*}{Claim}
\newtheorem{definition}[proposition]{Definition}
\def\ie{{\em i.e.,\ }}
\def\eg{{\em e.g.\ }}
\def\SS{{\mathbb S}^1} 
\def\eps{\varepsilon}
\def\phi{\varphi}
\def\N{{\mathbb N}}
\def\R{{\mathbb R}}
\def\K{{\mathbb K}}
\newcommand {\CA}{{\mathcal A}}
\newcommand {\CL}{{\mathcal L}}
\newcommand {\CP}{{\mathcal P}}
\newcommand {\CR}{{\mathcal R}}
\newcommand {\CX}{{\mathcal X}}
\def\s{\sigma}
\def\l{\lambda}
\def\1{ {\hbox{{\it 1}} \!\! I} }
\def\?{{\bf ???}}
\def\al{\alpha}
\def\be{\beta}
\def\ga{\gamma}
\def\8{\infty}
\newcommand{\ol}{\overline}
\def\disp{}
\newcommand{\arc}[1]{\stackrel{\frown}{#1}}
\renewcommand{\S}{\Sigma}
\newcommand{\wt}{\widetilde}
\newcommand{\BBone}{{1\!\!1}}
\newcommand{\w}{\mathbf{ w}}
\newcommand{\e}{\mathbf{ e}}
\theoremstyle{definition}
\newtheorem{remark}{Remark}
\begin{document}
\synctex =1
\title[Renormalization and thermodynamics]
{Renormalization, freezing phase transitions and Fibonacci
quasicrystals.}
\author{Henk Bruin and Renaud Leplaideur}
\date{Version of \today}
\thanks{}

\subjclass[2000]{37A35, 37A60, 37D20, 37D35, 47N10} 
\keywords{thermodynamic formalism, equilibrium states, phase transition, substitution, Fibonacci numbers}

\maketitle

\begin{abstract}
We examine the renormalization operator determined by the Fibonacci substitution. We exhibit a fixed point and determine its stable leaf (under iteration of the operator). Then, we study the 
thermodynamic formalism for potentials in this stable leaf, and prove they have a freezing phase transition, with ground state supported on the attracting quasi-crystal associated to the Fibonacci substitution. 
\end{abstract}

%

\section{Introduction}\label{sec:intro}
\subsection{Background}
The present paper studies phase transitions from an ergodic theory and dynamical system viewpoint. It investigates the relations between renormalization, substitutions and phase transition initiated in \cite{baraviera-leplaideur-lopes} and continued in \cite{BL}.

Phase transitions are an important topic in statistical mechanics and also in probability theory (see \eg \cite{georgii,grimmett}). 
The viewpoint presented here is different for several reasons. One of them is that, here, the geometry of the lattice is not relevant\footnote{and we only consider a one-dimensional lattice.}, whereas in statistical mechanics, 
the geometry of the lattice is the most relevant part. 

A {\em phase transition} is characterized by a lack of analyticity of the 
pressure function. 
This definition of phase transition is inspired by statistical mechanics 
and is now standard for dynamical systems, see \cite{bowen, ruelle, sinai}. 
Given a dynamical systems, say $(X,T)$, and a potential $\phi:X\to\R$, the pressure function is given by 
$$
\CP(\be):=\sup\left\{h_{\mu}(T)+\be \int\phi\,d\mu\right\},
$$
where the supremum is taken over the invariant probability measures $\mu$, $h_{\mu}(T)$ is the Kolmogorov entropy and $\be$ is a real parameter. 

For a uniformly hyperbolic dynamical system $(X,T)$ and 
a H\"older continuous potential $\phi$, the pressure function $\be\mapsto \CP(\be)$ is analytic (see \eg \cite{bowen, ruelle, keller}). Even if analyticity is usually considered as a very rigid property and thus quite rare, it turns out that proving non-analyticity for the pressure function is not so easy. 
Currently, this has become an important challenge in smooth ergodic theory to produce and study phase transitions, see \eg \cite{Makarov-Smirnov, coronel-Rivera-Letelier, BL, iommi-todd}. 

To observe phase transitions, one has to weaken hyperbolicity of the system
or of regularity of the potential; it is the latter one that we continue to investigate here. Our dynamical system is the full shift, which is uniformly hyperbolic. 
The first main question we want to investigate is thus what potentials $\phi$ will produce phase transitions. More precisely, we are looking for a machinery to produce potentials with phase transitions. 

The main purpose of \cite{baraviera-leplaideur-lopes} was to investigate possible relation between {\em renormalization} and phase transition. In the shift
space  $(\{0,1\}^{\N}, \s)$, a renormalization is a function $H$ satisfying an equality of the form 
\begin{equation}\label{equ1-renorm}
\s^{k}\circ H=H\circ \s.
\end{equation}
 The link with potentials was made in \cite{baraviera-leplaideur-lopes} by introducing a renormalization operator $\CR$ acting on potentials and related to a solution $H$ for \eqref{equ1-renorm}.

It is easy to check that constant length $k$ substitutions are 
solutions to the renormalization equation.
In \cite{BL}, we studied  the Thue-Morse case substitution,
which has constant length $2$. 
Here we investigate the Fibonacci substitution, which is not of constant length. Several reasons led us to study the Fibonacci case:

$\bullet$ Together with the Thue-Morse substitution, the Fibonacci substitutions is the most ``famous'' substitution and it has been well-studied. In particular, the dynamical properties of their respective attracting sets are well-known and this will be
used extensively in this paper for the Fibonacci shift. As a result, we were 
able to describe the relevant fixed point of renormalization exactly. 
Information of the left and right-special words in these attractors is 
a key ingredient to prove existence of a phase transition; it 
is a crucial issue in the relations between 
substitutions and phase transitions.  

$\bullet$ The type of phase transition we establish is a {\em freezing phase transition}. This means that beyond the phase transition (\ie for large $\be$), 
the pressure function is affine and equal to its asymptote, and the equilibrium state (\ie ground state) is the unique shift-invariant measure
supported on an aperiodic subshift space, sometimes called {\em quasi-crystal}.
One open question in statistical mechanics (see \cite{vanEnter?}) is whether freezing phase transitions can happen and whether {\em quasi-crystal ground state}
can be reach at {\em positive temperature}. An affirmative answer was given for the Thue-Morse quasi-crystal in \cite{BL}; we show here this also holds for the Fibonacci quasi-crystal. 

$\bullet$ 
We think that Fibonacci shift opens the door to study more cases. One natural question is whether any quasi-crystal can be reached as a ground state at positive temperature. In this context we emphasize that the Fibonacci substitution also has a Sturmian shift, that is, it is related to the irrational rotation with angle the golden mean $\ga:=\frac{1+\sqrt5}2$. 
We expect that the machinery developed here for the Fibonacci substitution can
be extended to the Sturmian shift associated to general irrational rotation
numbers 
(although those with bounded entries in the continued fraction expansion
will be the easiest), possibly to rotations on higher-dimensional tori, and also to more general substitutions.

\subsection{Results}

Let $\S = \{0,1\}^{\N}$ be the full shift space; points in $\S$ 
are sequences $x:=(x_{n})_{n\ge 0}$ or equivalently infinite {\em words} $x_{0}x_{1}\ldots$. 
Throughout we let $\ol x_j = 1-x_j$ denote the opposite symbol.
The dynamics is the left-shift 
$$
\s: x=x_{0}x_{1}x_{2}\ldots\mapsto x_{1}x_{2}\ldots.
$$
Given a word $w=w_{0}\ldots w_{n-1}$ of {\em length} $|w|=n$, the corresponding
{\em cylinder} (or {\em $n$-cylinder}) is the set of infinite words starting as $w_{0}\ldots w_{n-1}$. We use the notation
$C_n(x)=[x_0\dots x_{n-1}]$
for the $n$-cylinder containing $x=x_{0}x_{1}\ldots$ 
If $w=w_{0}\ldots w_{n-1}$ is a word with length $n$ and $w'=w'_{0}\ldots$ a word of any length, the {\em concatenation} $ww'$ is the word $w_{0}\ldots w_{n-1}w'_{0}\ldots$. 

The Fibonacci substitution on $\S$ is defined by: 
$$
H: \begin{cases}
0 \to 01\\
1 \to 0.
\end{cases}
$$
and extended for words by the concatenation rule $H(ww')=H(w)H(w')$.
It is convenient for us to count the Fibonacci numbers starting with index $-2$:
\begin{equation}\label{eq:Fibo}
F_{-2} = 1,\ F_{-1} = 0,\ F_0=1, \ F_1 = 1,\ F_2 = 2,\ F_{n+2} = F_{n+1} + F_n,
\end{equation}
We have
\begin{equation}\label{eq:Fiboa}
F_{n}^{a}:=|H^{n}(a)| = \begin{cases}
F_{n+1} & \text{ if } a = 0,\\
F_{n} & \text{ if } a = 1.
\end{cases}
\end{equation}
The Fibonacci substitution has a unique fixed point
$$
\rho = 0\ 1\ 0\ 01\ 010\ 01001\ 01001010\ 0100101001001\dots
$$
We define the orbit closure $\K = \overline{\cup_n \s^n(\rho)}$; it forms a subshift
of $(\S, \sigma)$ associated to $\rho$. More properties on $\K$ are given in Section~\ref{sec-H-K-R}.

\bigskip
We define the renormalization operator 
acting on potentials $V:\S\to \R$ by 
$$
(\CR V)(x)= \begin{cases}
 V\circ \s\circ H(x)+V\circ H(x) & \text{ if }x\in[0],\\
 V\circ H(x) & \text{ if }x\in[1].
\end{cases}
$$
We are interested in finding fixed points for $\CR$ and, where possible, studying their stable leaves,
\ie potentials converging to the fixed point under iterations of $\CR$. 
Contrary to the Thue-Morse substitution, 
the Fibonacci substitution is not of constant length. This is the source of several complications, in particular for the correct expression for $\CR^{n}$. 

For  $\al>0$, let $\CX_{\al}$ be the set of functions $V: \S \to \R$ 
such that $\disp V(x) \sim n^{-\al}$ if $d(x,\K)=2^{-n}$. 
More precisely, $\CX_{\al}$ is the set of functions $V$ such that:
\begin{enumerate}
\item $V$ is continuous and non-negative. 
\item There exist two continuous functions $g,h:\S \to \R$, satisfying $\disp h_{|\K}\equiv 0$ and $g>0$, such that 
$$
V(x) = \frac{g(x)}{n^\al} + \frac{h(x)}{n^\al} \quad \text{ when } \quad 
d(x,\K) = 2^{-n}.
$$
\end{enumerate}
We call $g$ the \textit{$\al$-density}, or just the \textit{density} 
of $V\in \CX_{\al}$. Continuity and the assumption $h_{|\K}\equiv 0$ imply 
that $h(x)/n^{\al} = o(n^{-\al})$.

Our first theorem achieves the existence of a fixed point for $\CR$ and shows that the germ of $V$ close to $\K$, \ie its $\al$-density, allows us to 
determine the stable leaf of that fixed point. 

Given a finite word $w$, let
$\kappa_a(w)$ denote the number of symbols $a\in\{0,1\}$ in $w$.
If $x \in \S \setminus \K$, we denote by $\wt\kappa_{a}(x)$ the number of symbols $a$ in the finite word $x_{0}\ldots x_{n-1}$ where $d(x,\K)=2^{-n}$. 

\begin{theorem} \label{theo-fixedpoint}
If $V \in \CX_\al$, with $\al$-density function $g$, then
$$
\lim_{k\to\8} \CR^{k}V(x) =
\begin{cases}
\quad \8 & \text{ for all  } x\in \S\setminus \K \text{ if } \al < 1; \\[2mm]
\quad 0 & \text{ for all  } x\in \S \text{ if } \al > 1; \\[2mm]
\quad \disp\int g \ d\mu_\K \cdot \wt V(x)
& \text{ for all  } x\in \S \text{ if } \al = 1,
\end{cases}
$$
where  $\wt V\in\CX_{1}$ is a fixed point for $\CR$, given by 
\begin{equation}\label{eq:tildeV}
\wt V(x) = 
\begin{cases}
 \log\left(\disp\frac{\wt\kappa_{0}(x)+\frac1\ga\wt\kappa_{1}(x)+\ga}{\disp\wt\kappa_{0}(x)+\frac1\ga\wt\kappa_{1}(x)+\ga-1}\right) & \text{ if } x \in [0];\\[4mm]
 \log\left(\disp\frac{\ga\wt\kappa_{0}(x)+\wt\kappa_{1}(x)+\ga^{2}}{\disp\ga\wt\kappa_{0}(x)+\wt\kappa_{1}(x)+\ga^{2}-1}\right) & \text{ if } x \in [1].
\end{cases}
\end{equation}
\end{theorem}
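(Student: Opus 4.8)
\emph{Step 1: a closed form for $\CR^n$.} The plan is to first unwind the iterates of $\CR$ despite $H$ not having constant length. Using $H(\sigma x)=\sigma^{|H(x_0)|}H(x)$ together with the self-similar identities $H^{n+1}(0)=H^n(0)H^n(1)$ and $H^{n+1}(1)=H^n(0)$ --- which give $\sigma^{F_{n+1}}H^{n+1}(x)=H^n(\sigma H(x))$ for $x\in[0]$, and so on --- I would prove by induction on $n$ that
\[
\CR^{n}V(x)=\sum_{j=0}^{F_n^{x_0}-1}V\!\bigl(\sigma^{j}H^{n}(x)\bigr),\qquad x\in\S .
\]
Thus $\CR^nV(x)$ is $V$ summed over the $F_n^{x_0}$ points $\sigma^jH^n(x)$, all of which lie inside the initial block $H^n(x_0)$ of $H^n(x)$ --- itself a prefix of $\rho$ --- so they are the ``shadows'' in $\S\setminus\K$ of the orbit segment $\rho,\sigma\rho,\dots,\sigma^{F_n^{x_0}-1}\rho$ of $\rho$.

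\emph{Step 2: the depth of the shadow points --- the crux.} For $x\in\S\setminus\K$ write $d(x,\K)=2^{-m}$, so that $x_0\dots x_{m-1}$ is admissible in $\K$ while $x_0\dots x_m$ is not; since $\K$ has complexity $\ell+1$, the word $x_0\dots x_{m-1}$ is not right-special and extends in $\K$ only by $\overline{x}_m$. The hard part of the whole proof is the following depth estimate, proved using recognizability of the Fibonacci substitution: the longest admissible prefix of $H^n(x)$ has length
\[
M_n:=m\bigl(H^n(x)\bigr)=\underbrace{\wt\kappa_0(x)\,F_{n+1}+\wt\kappa_1(x)\,F_n}_{=\,|H^n(x_0\dots x_{m-1})|}\;+\;c_n(x),\qquad \frac{c_n(x)}{F_n}\;\xrightarrow[\,n\to\8\,]{}\;\ga^{2},
\]
the universal constant $\ga^{2}$ arising from the telescoping overlap $H^n(0)=H^{n-1}(0)H^{n-1}(1)$, $H^{n-1}(0)=H^{n-2}(0)H^{n-2}(1),\dots$ between $H^n(x_m)$ and $H^n(\overline{x}_m)$ and its continuation; hence $M_n/F_n\to\ga\,\wt\kappa_0(x)+\wt\kappa_1(x)+\ga^{2}$. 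Along the same lines one checks that for $0\le j<F_n^{x_0}\ (\le M_n)$ the defect of $H^n(x)$ is merely shifted, so that
\[
m\bigl(\sigma^jH^n(x)\bigr)=M_n-j+O(1)\qquad\text{uniformly in }j,
\]
and that $\sigma^jH^n(x)$ agrees with $\sigma^j\rho\in\K$ on at least $cF_n$ initial symbols, for a fixed $c>0$, again uniformly in $j$. Controlling the $O(1)$ correction --- it comes from recognizability and from the finitely many right-special suffixes of the admissible prefix --- is precisely where the fine word-combinatorics of $\K$ has to be used with care.

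\emph{Step 3: summation, and the three regimes.} Write $L_1(n):=M_n-F_n^{x_0}+1$ and $L_2(n):=M_n$; by Step 2 both tend to $\8$, $L_2/L_1$ stays bounded (as $c_n(x)=\Theta(F_n)$), and $m(\sigma^jH^n(x))\in[L_1+O(1),L_2+O(1)]$ for all $j<F_n^{x_0}$. For \textbf{(i)} $\al<1$: since $g\ge\min g>0$ and $h|_\K\equiv0$, one has $g+h\ge\tfrac12\min g$ on the $2^{-cF_n}$-neighbourhood of $\K$ for $n$ large, hence
\[
\CR^nV(x)\ \ge\ \tfrac12\min g\sum_{j=0}^{F_n^{x_0}-1}\bigl(M_n-j+O(1)\bigr)^{-\al}\ \gtrsim\ F_n^{1-\al}\ \longrightarrow\ \8 .
\]
For \textbf{(ii)} $\al>1$: dually $0\le\CR^nV(x)\le 2\max g\sum_{j=0}^{F_n^{x_0}-1}(M_n-j)^{-\al}\le 2\max g\sum_{\ell\ge L_1}\ell^{-\al}=O(L_1^{1-\al})\to0$; and for $x\in\K$ one has $\CR^nV\equiv0$ directly, since $V|_\K\equiv0$ and $H(\K)\subseteq\K$. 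For \textbf{(iii)} $\al=1$: here one uses the finer estimate --- uniform continuity of $g$ and continuity of $h$ with $h|_\K\equiv0$ give $g(\sigma^jH^n(x))=g(\sigma^j\rho)+o(1)$ and $h(\sigma^jH^n(x))=o(1)$ uniformly, and since $\sum_{\ell=L_1}^{L_2}\ell^{-1}=O(1)$ all error terms sum to $o(1)$, so that
\[
\CR^nV(x)=\sum_{j=0}^{F_n^{x_0}-1}\frac{g(\sigma^j\rho)}{M_n-j}\;+\;o(1).
\]
Unique ergodicity of $(\K,\sigma,\mu_\K)$ gives $\sum_{j<k}g(\sigma^j\rho)=k\!\int g\,d\mu_\K+o(k)$ uniformly in the starting point; summation by parts against the slowly varying weight $(M_n-j)^{-1}$ then yields $\CR^nV(x)=\int g\,d\mu_\K\cdot\log\frac{L_2}{L_1}+o(1)$. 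Finally $\log\frac{L_2}{L_1}=\log\frac{M_n}{M_n-F_n^{x_0}+1}$ converges, by the asymptotics for $M_n$, to exactly the logarithm in \eqref{eq:tildeV} --- distinguishing $x\in[0]$ (where $F_n^{x_0}=F_{n+1}\sim\ga F_n$) from $x\in[1]$ (where $F_n^{x_0}=F_n$) --- which proves the displayed limit with $\wt V$ given by \eqref{eq:tildeV}.

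\emph{Step 4: $\wt V$ is a fixed point.} It remains to see that the limit function in case (iii) is indeed fixed by $\CR$. From \eqref{eq:tildeV} one checks that $\wt V\in\CX_1$, with density $g_{\wt V}$ continuous and positive; using $\wt\kappa_0(x)/m\to\mu_\K([0])=\ga^{-1}$ and $\wt\kappa_1(x)/m\to\mu_\K([1])=\ga^{-2}$ as $x\to\K$, the density $g_{\wt V}$ equals the constant $\frac{2\ga+1}{\ga+2}$ on $\K\cap[0]$ and $\frac{\ga+1}{\ga+2}$ on $\K\cap[1]$, so that $\int g_{\wt V}\,d\mu_\K=\frac{2\ga+1}{\ga+2}\cdot\frac1\ga+\frac{\ga+1}{\ga+2}\cdot\frac1{\ga^{2}}=1$ (using $\ga^{2}=\ga+1$). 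Applying case (iii) to $V=\wt V$ then gives $\lim_k\CR^k\wt V=\bigl(\int g_{\wt V}\,d\mu_\K\bigr)\wt V=\wt V$, and since $\CR$ is continuous for pointwise convergence, $\CR\wt V=\lim_k\CR^{k+1}\wt V=\wt V$. (One can instead verify $\CR\wt V=\wt V$ directly from \eqref{eq:tildeV} via additive cocycle identities for $\wt\kappa_a(\sigma Hx)$ and $\wt\kappa_a(Hx)$, but these in turn rest on the depth analysis of Step 2, which remains the principal difficulty throughout.)
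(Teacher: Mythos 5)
Your skeleton is essentially the paper's: the Birkhoff-sum formula $\CR^nV|_{[a]}=S_{F^a_n}V\circ H^n|_{[a]}$ proved by induction, the depth bookkeeping $M_n=F_{n+1}\wt\kappa_0(x)+F_n\wt\kappa_1(x)+F_{n+2}-2$ with $F_{n+2}/F_n\to\ga^2$, and a weighted ergodic sum over $\K$ whose limit is $\int g\,d\mu_\K\cdot\log\frac{M_n}{M_n-F^{x_0}_n}$, which indeed reproduces \eqref{eq:tildeV} in both cylinders. Two of your deviations are legitimate and even pleasant: you replace the paper's Toeplitz/M\"uller summation by summation by parts against the slowly varying weights $(M_n-j)^{-1}$ combined with unique ergodicity (this works; the error terms are $o(1)$ exactly as you indicate), and you obtain the fixed-point identity $\CR\wt V=\wt V$ as a corollary of the $\al=1$ limit applied to $V=\wt V$ (your densities $\frac{2\ga+1}{\ga+2}$ and $\frac{\ga+1}{\ga+2}$ agree with the paper's $\frac{\ga^2}{2\ga-1}$ and $\frac{\ga}{2\ga-1}$, and integrate to $1$), whereas the paper verifies the fixed-point equation by a direct algebraic computation. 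Since $\CR W(x)$ only evaluates $W$ at $H(x)$ and $\s H(x)$, passing to the limit pointwise is indeed enough, so Step 4 is fine once Step 3 is.

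The genuine gap is Step 2, which you flag but do not close, and it is precisely where the paper does most of its work. Your two assertions there --- that the nearest point of $\K$ to $H^n(x)$ is $H^n(y)$ (so the defect depth is exactly $M_n$), and that for $0\le j<F^{x_0}_n$ the depth is ``merely shifted'', $m(\s^jH^n(x))=M_n-j+O(1)$ uniformly --- are exactly Corollary~\ref{coro-dHnK} and Lemma~\ref{lem-no-accident} of the paper. Their proofs require the classification of the bi-special words of the Fibonacci shift (Proposition~\ref{prop-bispecialfibo}: they are the prefixes of $\rho$ of length $F_m-2$), the fact that any drop in $d(\s^j\cdot,\K)$ forces a bi-special word in a prescribed position (Lemma~\ref{lem-accident-bispecial}), and a de-substitution argument (the $H^{-1}$ procedure, which also underlies Proposition~\ref{prop-K-H}) ending in a contradiction with the structure of $\rho$. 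Invoking ``recognizability'' and ``the finitely many right-special suffixes of the admissible prefix'' names the right tools but is not a proof; moreover your weakened $O(1)$ form is not materially easier than the exact no-accident statement, since one must still exclude that some shifted tail of $H^n(x)$ matches a \emph{different} element of $\K$ to depth much larger than $M_n-j$. Note where this bites: an undetected accident makes $\s^jH^n(x)$ closer to $\K$, hence makes the summand \emph{smaller}, so your upper bound for $\al>1$ survives without it, but the lower bound for $\al<1$ and the exact asymptotics for $\al=1$ --- the cases that produce $\wt V$ --- do not. So the analytic superstructure is correct and matches the paper, but the combinatorial heart of the theorem is asserted rather than proved.
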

This precise expression of $\wt V$ corresponds to a $\alpha$-density
$\tilde g(x) = \ga^2/(2\ga-1)$ if $x \in [0] \cap K$ and $\tilde g(x) =
\ga/(2\ga-1)$ if $x \in [1] \cap \K$,
and $\int \wt V(x) d\mu_\K = 1$.

Our second theorem suggests that renormalization for potentials is a machinery to produce potentials with phase transition. 
We recall that a {\em  freezing phase transition} is characterized
by the fact that the pressure is of the form 
$$
\CP(\be)=a\be+b \quad \text{ for } \be\ge\be_{c}
$$
and that the equilibrium state is fixed for $\be\ge\be_c$.
The word ``freezing'' comes from the fact that in statistical mechanics 
$\be$ is the inverse of the temperature (so the temperature 
goes to $0$ as $\be\to+\8$) and that a {\em ground-state} 
is reached at positive temperature $1/\be_c$, see \cite{CLT, Dyson}. 

\begin{theorem}\label{theo-pt}
Any potential $\varphi:=-V$ with $V\in \CX_{1}$ admits a freezing phase transition at finite $\be$: there exists $\be_{c}>0$ such that 
\begin{enumerate}
\item for $0\le \be<\be_{c}$ the map $\CP(\be)$ is analytic, there exists a unique equilibrium state for $\be \varphi$ and this measure has full support;
\item for $\be>\be_{c}$, $\CP(\be)=0$ and  $\mu_{\K}$ is the unique 
equilibrium state for $\be \varphi$.  
\end{enumerate}
\end{theorem}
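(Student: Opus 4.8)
The plan is to analyze the pressure function $\CP(\be) = \sup_\mu\{h_\mu(\s) + \be\int\phi\,d\mu\}$ for $\phi = -V$, $V\in\CX_1$, $V\ge 0$, using the fact that $V$ vanishes exactly on $\K$ (since $V(x)=g(x)/n^\al + h(x)/n^\al$ with $g>0$ forces $V>0$ off $\K$, while continuity and $V|_\K$ being a limit of these terms gives $V|_\K\equiv 0$). Consequently $\int\phi\,d\mu\le 0$ with equality iff $\mu$ is supported on $\K$; since $\K$ is a uniquely ergodic minimal subshift (the Fibonacci subshift, cf.\ Section~\ref{sec-H-K-R}) with zero topological entropy, the only invariant measure on $\K$ is $\mu_\K$ and $h_{\mu_\K}(\s)=0$. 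Hence for every $\be$ we have $\CP(\be)\ge h_{\mu_\K}+\be\cdot 0 = 0$, and $\CP$ is convex, non-increasing, with $\CP(0)=\log 2>0$. The affine asymptote is $a=0$, $b=0$: the candidate freezing regime is $\CP(\be)\equiv 0$ with equilibrium state $\mu_\K$.

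First I would establish the upper side: for large $\be$, $\CP(\be)=0$ and $\mu_\K$ is the unique equilibrium state. The key estimate is that for $\mu\ne\mu_\K$ the entropy $h_\mu(\s)$ is controlled by how much mass $\mu$ puts near $\K$ versus the rate at which $-\be\int V\,d\mu$ becomes negative. Here the left/right-special word structure of the Fibonacci subshift enters: the number of $n$-cylinders intersecting $\K$ grows linearly (complexity $p(n)=n+1$), and a word of length $n$ at distance $2^{-n}$ from $\K$ that is \emph{not} in $\K$ must contain a ``defect'' whose position is bounded, so that $\wt\kappa_0(x)+\wt\kappa_1(x) = n$ and $V(x)\asymp 1/n$ along such words. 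A measure with positive entropy must spread over exponentially many cylinders, forcing typical points to have $d(x,\K)$ bounded away from $0$, hence $\int V\,d\mu\ge c(\mu)>0$; then $h_\mu + \be\int\phi\,d\mu \le \htop(\s) - \be c(\mu) < 0$ for $\be$ large depending a priori on $\mu$. To get a \emph{uniform} $\be_c$ one runs a standard argument: partition invariant measures by the value $t=\int V\,d\mu$, bound the entropy of measures with $\int V\,d\mu\le t$ by a function $e(t)\to 0$ as $t\to 0$ (this is where the sub-exponential, indeed linear, complexity of the $\eps$-neighborhoods of $\K$ is used, via a Katok-type or Bowen-ball counting argument), and conclude $\CP(\be)\le\sup_{t\ge 0}(e(t)-\be t)$, which is $0$ for $\be$ large. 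Uniqueness of the equilibrium state on the freezing side follows because any maximizing sequence must have $\int V\,d\mu\to 0$ and $h_\mu\to 0$, and weak-$*$ limits are supported on $\K$, hence equal $\mu_\K$ by unique ergodicity; that $\mu_\K$ is genuinely an equilibrium state (not just a limit) is immediate since it attains $\CP(\be)=0$.

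Next I would treat $0\le\be<\be_c$: analyticity of $\CP$ and existence, uniqueness, and full support of the equilibrium state. Since $V\in\CX_1$ is continuous and strictly positive off $\K$, for $\be$ below the freezing threshold the variational problem is still ``pulled'' away from $\K$ enough that $\CP(\be)>0$; the standard route is to show $\be\phi$ satisfies the Walters (or Bowen) condition and has a unique equilibrium state via the transfer operator, using that $V$ is continuous with a modulus of continuity good enough (summable variations: $\Var_n(V)=O(1/n)$ is not summable, so one must be more careful — likely one invokes the fact that $V$ is locally constant off any neighborhood of $\K$, or uses the induced/first-return map to the complement of a cylinder shrinking to $\K$ to recover hyperbolicity and a spectral gap). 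Real-analyticity of $\be\mapsto\CP(\be)$ on $(-\infty,\be_c)$ then comes from analytic perturbation theory for the leading eigenvalue of the transfer operator (Ruelle's theorem), once the spectral gap is in place. Full support of the equilibrium state follows from the Gibbs property relative to the continuous potential. Finally, $\be_c$ is characterized as $\sup\{\be:\CP(\be)>0\}=\inf\{\be:\CP(\be)=0\}$, and one checks $0<\be_c<\infty$: finiteness from the uniform estimate above, positivity because $\CP(0)=\log 2>0$ and $\CP$ is continuous.

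The main obstacle is the uniform entropy-vs-integral estimate near $\K$ — i.e.\ producing $\be_c<\infty$ independent of $\mu$, and simultaneously handling the borderline regularity of $V$ (variations decaying like $1/n$, hence non-summable). This is precisely where the detailed combinatorics of the Fibonacci subshift (linear complexity, the exact location and structure of left- and right-special words, the fact noted in Theorem~\ref{theo-fixedpoint} that $\wt\kappa_0+\frac1\ga\wt\kappa_1$ grows linearly along approaches to $\K$) must be leveraged to quantify how entropy forces distance from $\K$. I expect the cleanest implementation is via a renewal/inducing argument on the first-return map to $\S\setminus[\text{short }\K\text{-word}]$, turning the phase transition into the divergence of an induced pressure series, as in the Thue–Morse treatment of \cite{BL}, with the Fibonacci special-word data replacing the constant-length bookkeeping.
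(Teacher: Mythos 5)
Your overall skeleton is right (pressure $\ge 0$, convex, $\CP(0)=\log 2$; freezing means $\CP\equiv 0$ beyond some finite $\be_c$ with ground state $\mu_\K$; analyticity, uniqueness and full support below $\be_c$ via inducing/transfer operators), and your closing remark that the cleanest route is an induced first-return pressure series is indeed the road the paper takes, following \cite{leplaideur-synth}. But there is a genuine gap: the entire content of the paper's proof is the quantitative estimate you defer. Concretely, the paper fixes a cylinder $J$ avoiding $\CA_\K$, and proves (Proposition~\ref{prop-spectral&zc}) that $\CL_{0,\be}(\BBone_J)<1$ for $\be$ large, by decomposing return paths into free times and excursions, and splitting each excursion at its \emph{accidents}; Lemma~\ref{lem-accident-bispecial}, Proposition~\ref{prop-bispecialfibo} and Lemma~\ref{lem:bij} pin down where bi-special words can recur (gaps $F_{k-2}$ or $F_{k-1}$), and Steps 2--4 (the matrix Lemma~\ref{lem-matrix} and Proposition~\ref{prop-calcul-matrix-majo}) sum the resulting series to show the excursion contribution $C_E\to 0$ as $\be\to+\8$. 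None of this appears in your proposal; you acknowledge it as ``the main obstacle'' but offer only the expectation that Fibonacci special-word data will make it work.

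Moreover, the alternative you sketch in more detail does not close the gap as stated. The claim that ``a measure with positive entropy must spread over exponentially many cylinders, forcing typical points to have $d(x,\K)$ bounded away from $0$'' is false: positive-entropy measures can put most of their mass in arbitrarily small neighborhoods of $\K$, which is exactly why the transition point is delicate. And the bound $\CP(\be)\le\sup_t\bigl(e(t)-\be t\bigr)$ yields a \emph{finite} $\be_c$ only if $e(t)=O(t)$ as $t\to 0$; a qualitative statement $e(t)\to 0$ gives nothing (for $V\in\CX_\al$ with $\al>1$ one still has $e(t)\to0$, yet no freezing at finite $\be$ is expected). Proving the linear bound $e(t)\le Ct$ is equivalent in difficulty to the excursion estimate the paper carries out, and requires precisely the accident/bi-special combinatorics (positions and gaps of the words $\rho_0\dots\rho_{F_m-3}$) rather than just the linear complexity $p(n)=n+1$. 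Finally, note the paper first treats the model potential $\varphi=-\log\frac{n+1}{n}$ and then passes to general $V\in\CX_1$ by the comparison $-V\le\kappa\varphi$, and it obtains analyticity on $[0,\be_c)$ from the implicit equation $\l_{\CP(\be),\be}=1$ plus the strict inequality $\CP(\be)>Z_c(\be)$, together with an argument that $\be_c=\be_0$; your spectral-gap route below $\be_c$ is plausible but, as you yourself note, needs the induced system to handle the non-summable variations, which again lands you in the same framework without supplying its key estimate.
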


These two theorems explain a link between substitution, renormalization and phase transition on quasi-crystals: a substitution generates a quasi-crystal but also allows to define a renormalization operator acting on the potentials. This operator has some fixed point, and the stable leaf of that fixed point furnishes a family of potentials with freezing phase transition. 

\subsection{Outline of the paper}
In Section~\ref{sec-H-K-R} we recall and/or prove various
properties of the Fibonacci subshift and its special words.
We establish the form of $H^n$ and $\CR^nV$ for arbitrary $n$ and relate this
to (special words of) the Fibonacci shift.
In Section~\ref{sec-prooftheofix}, after  clarifying the role of accidents on
the computation of $\CR^nV$, we prove Theorem~\ref{theo-fixedpoint}.
Section~\ref{sec-proffthpt} deals with the thermodynamic formalism.
Following the strategy of \cite{leplaideur-synth} we specify and estimate the required (quite involved) quantities that are the core of the proof of Theorem~\ref{theo-pt}.

%
\section{Some more properties of $H$, $\K$ and $\CR$}\label{sec-H-K-R}
\subsection{The set \boldmath $\K$ as Sturmian subshift  \unboldmath}
In addition to being a substitution subshift, $(\K,\s)$ is the Sturmian subshift associated to the golden mean rotation, $T_{\ga}:x \mapsto x+\ga \pmod 1$. The golden mean is
$\ga=\frac{1+\sqrt5}2$  and it satisfies $\ga^{2}=\ga+1$. 

Fixing an orientation on the circle, 
let $\arc{ab}$ denote the arc of points between $a$ and $b$ in the circle 
in that orientation. 
If we consider the itinerary of $2\ga$ under the action of $T_{\ga}$ with the code $0$ if the point belongs to $\arc{0\ga}$ and $1$ if the point belongs to $\arc{\ga0}$ (see Figure~\ref{fig-codi-fibo}), we get $\rho$, the fixed point of the substitution.

\begin{figure}[htbp]
\begin{center}
\includegraphics[scale=0.5]{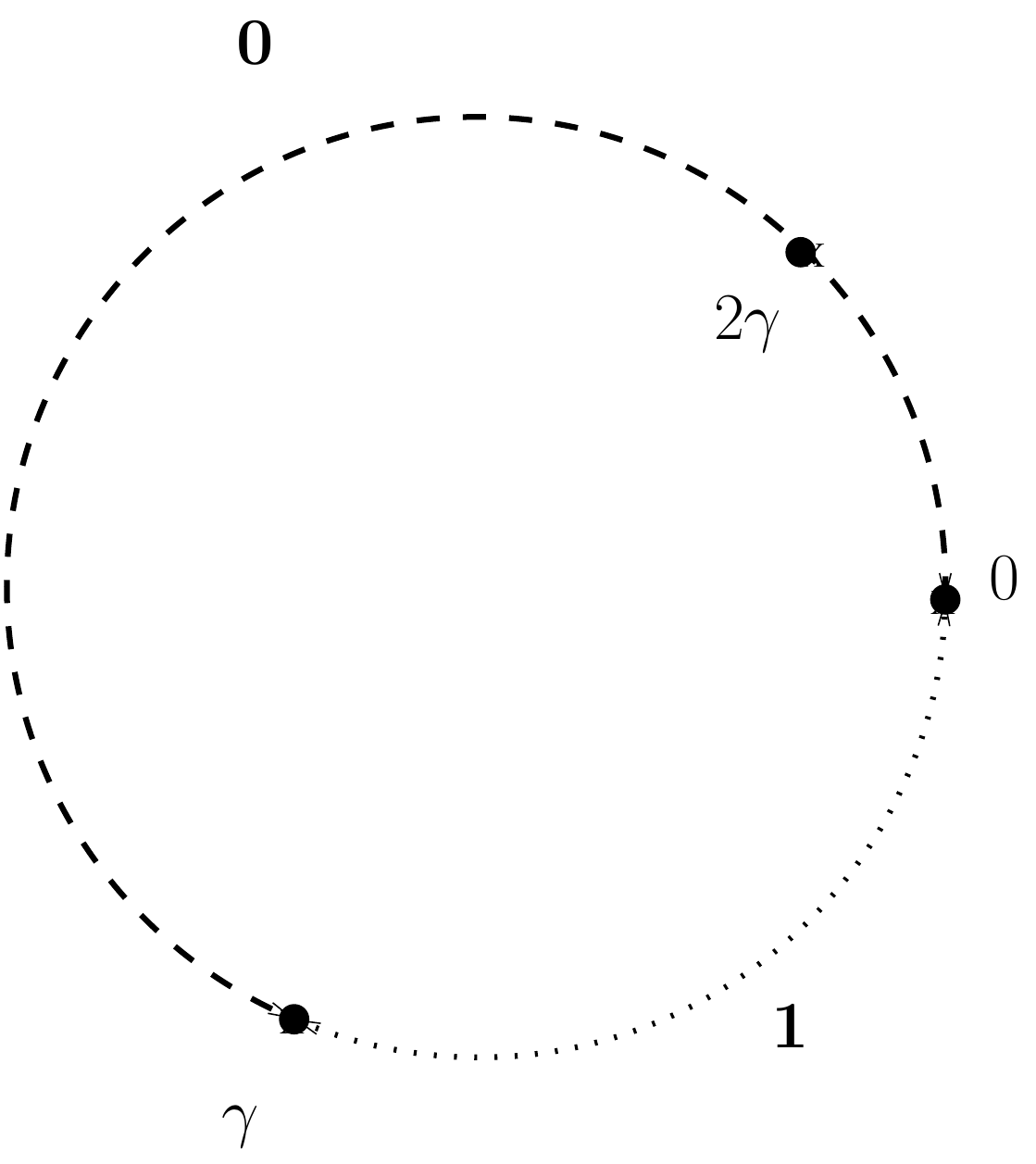}
\caption{Coding for Fibonacci Sturmian subshift.}
\label{fig-codi-fibo}
\end{center}
\end{figure}

There is an almost (\ie up to a countable set) one-to-one correspondence between points in $\K$ and codes of orbits of $(\SS, T_\ga)$,
 expressed by the commutative diagram
$$
\begin{array}{ccc}\SS & \stackrel{\disp T_\ga}{\longrightarrow} & \SS \\\pi\downarrow & \circlearrowleft & \downarrow\pi \\\K & \stackrel{\disp\s}{\longrightarrow} & \K\end{array}
$$
and $\pi$ is a bijection, except at points $T_\ga^{-n}(\ga) \in \SS$, $n \ge 0$.
Since Lebesgue measure is the unique $T_\ga$-invariant probability measure,
$\mu_\K := \text{Leb} \circ \pi^{-1}$ is the unique
invariant probability measure of $(\K,\s)$.  

We will use the same terminology for both $\K$ and $\SS$. 
For instance, a cylinder $C_{n}(x)$ for $x \in \SS$ is an interval\footnote{Some work has to be done to check that it actually is an interval.}, 
with the convention that $C_{n}(x)=\pi^{-1}(C_{n}(\pi(x)))$, and we may confuse 
a point $x \in \SS$ and its image $\pi(x) \in \K$. 

\begin{definition}\label{def-words}
Let $\CA_{\K}$ denote the set of finite words that appear in $\rho$.
 A word $\omega:=\omega_{0}\ldots \omega_{n-1}\in \CA_{\K}$ is said to be {\em left-special} if $0w$ and $1w$ both appear in $\CA_{\K}$. It is {\em right-special} if $w0$ and $w1$ both appear in $\CA_{\K}$.
A left and right-special word is called {\em bi-special}. A {\em special} word is either left-special or right-special. 
\end{definition}
 
Since $\rho$ has $n+1$ words of length $n$ 
(a characterization of Sturmian words), there is exactly one left-special 
and one right-special word of length $n$.
They are of the form $\rho_{0}\ldots \rho_{n-1}$ and $\rho_{n-1}\ldots \rho_{0}$
respectively, which can be seen from the forward itinerary
of $x \approx \ga$ and backward itinerary of $x \approx 0$ in the circle. 
Sometimes the left and right-special word merge into a single bi-special 
word $\omega$, but only
one of the two words $0\omega0$, and $1\omega1$ appears in $\CA_{\K}$, see \cite[Section 1]{arnoux-rauzy}, the construction of $\Gamma_{n+1}$ from $\Gamma_{n}$.

\begin{proposition}\label{prop-bispecialfibo}
Bi-special words in $\CA_{\K}$ are of the form $\rho_{0}\ldots \rho_{F_m-3}$
and for each $m \ge 3$, $\rho_{0}\ldots \rho_{F_m-3}$ is bi-special.
\end{proposition}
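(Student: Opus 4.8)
The plan is to identify bi-special words via the return-word/desubstitution structure of the Fibonacci substitution $H$, exploiting the fact that $H$ sends special words to special words of controlled length. First I would recall the two basic mechanisms: (i) every left-special word of length $n$ is the prefix $\rho_0\dots\rho_{n-1}$ of $\rho$, and every right-special word of length $n$ is its mirror $\rho_{n-1}\dots\rho_0$ (both stated in the excerpt, coming from the forward itinerary of $x\approx\ga$ and the backward itinerary of $x\approx 0$), and (ii) under $H$, the image $H(w)$ of a special word is again special, and more precisely $H$ induces a bijection between bi-special words that reflects the hierarchy of the substitution. Since $|H^n(0)| = F_{n+1}$, applying $H$ to a bi-special word raises its length in a way governed by the Fibonacci numbers; tracking the exact index shift is the bookkeeping core of the argument.

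The concrete route I would take is: show that the empty word is bi-special (it is, vacuously, since $00$? — here one uses that the relevant short extensions of the small special words live in $\CA_\K$) and that the single letter $0$ is bi-special ($00$ and $01$, resp.\ $00$ and $10$, all occur in $\rho$), giving the base cases $m=3$ ($F_3-3 = 2-3$, reading off $\rho_0\dots\rho_{F_m-3}$ with the convention that a negative upper index means the empty word) and $m=4$ ($F_4-3=0$, the word $\rho_0$). Then I would prove the inductive step: if $\omega = \rho_0\dots\rho_{F_m-3}$ is bi-special, then $H(\omega)$, suitably adjusted by deleting or appending the last letter so that it again has the palindromic-prefix form, equals $\rho_0\dots\rho_{F_{m+1}-3}$ and is bi-special. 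The key identity to verify is the length computation: $|H(\rho_0\dots\rho_{F_m-3})| = F_{m+1}-3 + (\text{correction})$, where the correction accounts for the extra letter that $H$ produces at the boundary (since $H(0)=01$ has length $2$ and $H(1)=0$ has length $1$, the image length is $F_m-2 + \kappa_0(\omega)$, and one rewrites $\kappa_0$ of a prefix of $\rho$ in terms of Fibonacci numbers). That $H(\omega)$ is still bi-special follows because $H$ is injective on one-sided sequences and maps both one-sided extensions $0\omega, 1\omega$ (resp.\ $\omega 0, \omega 1$) into distinct legal words of $\K$; one must check no new special words of intermediate length are created, which is where the "exactly one left-special and one right-special word of each length" fact is used to pin down that the bi-special words are *precisely* the ones in the asserted list, not merely a subset of it.

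The main obstacle I anticipate is the boundary/off-by-one accounting: $H$ does not preserve the "prefix of $\rho$" normalization exactly — the image of a palindromic prefix acquires or loses a letter at the right end depending on whether it ends in $0$ or $1$ — so one has to track carefully whether it is $\rho_0\dots\rho_{F_m-3}$ or $\rho_0\dots\rho_{F_m-2}$ (or with a deleted final symbol) that maps cleanly, and then match this against the parity of $m$ and the value of $\rho_{F_m-3}$. A clean way to handle this, which I would adopt, is to pass through the Sturmian picture: the bi-special word of "combinatorial length" corresponding to the circle is governed by the continued fraction expansion of $\ga$, which is $[1;1,1,1,\dots]$, so the bi-special lengths are exactly the denominators-minus-something of the convergents, i.e.\ Fibonacci numbers shifted by the constant $-3$ from \eqref{eq:Fibo}. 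Concretely, a word $w$ is bi-special iff the interval $C_{|w|+?}$ at the relevant scale contains both $0$ and $\ga$ as endpoints in the induced partition under $T_\ga$, and this happens exactly at the "closest return" scales, i.e.\ the Fibonacci scales; citing \cite[Section 1]{arnoux-rauzy} for the construction of $\Gamma_{n+1}$ from $\Gamma_n$ lets me import this without redoing the Rauzy-graph analysis. I would then reconcile the two indexings ($F_m-3$ in the word-combinatorics versus the convergents of $\ga$) by a direct check on the first few cases, $m=3,4,5,6$, which fixes the constant once and for all.
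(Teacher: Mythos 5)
Your proposal is correct in outline and overlaps with the paper on the half of the argument that genuinely needs an external input: like the paper, you delegate the statement that \emph{only} the lengths $F_m-2$ can carry a bi-special word to the Rauzy-graph analysis of \cite[Section 1]{arnoux-rauzy} (and you rightly observe that ``one left-special and one right-special word per length'' alone does not tell you at which lengths the two coincide). Where you differ is the existence part. The paper iterates the three two-sided extension blocks $0\cdot01$, $0\cdot10$, $1\cdot01$ under $H$ and observes that the common central block is simultaneously preceded by both letters and followed by both letters inside genuine factors of $\rho$; this identifies the bi-special word of length $F_{n+2}-2$ with the common prefix of $H^n(01)$ and $H^n(10)$ from Lemma~\ref{lem-prop-stopbloc}, an identification that is reused later (e.g.\ in Lemma~\ref{lem-no-accident}). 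You instead propose the classical ``central word'' induction: if $\omega$ is bi-special then the next bi-special word is the adjusted image of $\omega$ under $H$. That route works, but to close it you should pin down the adjustment, which is not ``deleting or appending'' depending on parity: it is always $\omega\mapsto H(\omega)0$. Left-speciality is automatic because $H(\omega)0$ is again a prefix of $\rho=H(\rho)$ (every $H(\cdot)$-image is followed by a $0$) and all prefixes are left-special; right-speciality follows from $H(\omega0)=H(\omega)01$ together with $H(\omega10)=H(\omega)001$, using that $1$ is always followed by $0$ in $\K$; and the length bump $|H(\omega)0|=F_{m+1}-2$ requires the additional fact $\kappa_0(\rho_0\dots\rho_{F_m-3})=F_{m-1}-1$, which needs its own (easy) induction. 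Finally, your base-case indexing ($F_3-3$ negative, $F_4-3=0$) uses the standard Fibonacci convention rather than \eqref{eq:Fibo}, under which $m=3$ gives the word $0$ and $m=4$ gives $010$; this is only bookkeeping, as you anticipate, but it must be reconciled with the paper's convention for the statement as written. In short: same appeal to \cite{arnoux-rauzy} for exclusivity, a different but viable substitution-induction for existence, whose main dividends are the clean recursion $c_{m+1}=H(c_m)0$, at the cost of extra prefix-counting and of losing the direct link to Lemma~\ref{lem-prop-stopbloc} that the paper exploits afterwards.
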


We prove this proposition at the end of Section~\ref{subsec:Hn}

\subsection{Results for \boldmath $H^{n}$ \unboldmath}\label{subsec:Hn}
We recall that $\kappa_{a}(w)$ is the number of symbol $a$ in the finite word $w$. 

\begin{lemma}\label{lem-lengthHn}
For any finite word $w$, the following recursive relations hold:
\begin{eqnarray*}
\kappa_0(H^n(w)) &=& F_n \kappa_0(w) + F_{n-1} \kappa_1(w);\\
\kappa_1(H^n(w)) &=& F_{n-1} \kappa_0(w) + F_{n-2} \kappa_1(w);\\
|H^n(w)| &=& F_{n+1} \kappa_0(w) + F_n \kappa_1(w) = |H^{n-1}(w)|+|H^{n-2}(w)|, 
\end{eqnarray*}
where $|H^{0}(w)|=|w|,\ |H^{1}(w)|=|H(w)|$.
\end{lemma}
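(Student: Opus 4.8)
The plan is a routine induction on $n$, organized around the incidence (abelianization) matrix of $H$. Since $\kappa_0$ and $\kappa_1$ are additive under concatenation and $H$ commutes with concatenation, writing $w=w_0\ldots w_{|w|-1}$ gives $H^n(w)=H^n(w_0)\cdots H^n(w_{|w|-1})$, and hence, for $a\in\{0,1\}$,
$$
\kappa_a(H^n(w)) = \kappa_0(w)\,\kappa_a(H^n(0)) + \kappa_1(w)\,\kappa_a(H^n(1)).
$$
Thus the statement reduces to the four scalar identities $\kappa_0(H^n(0))=F_n$, $\kappa_0(H^n(1))=F_{n-1}$, $\kappa_1(H^n(0))=F_{n-1}$, $\kappa_1(H^n(1))=F_{n-2}$; equivalently, to the assertion that the $n$-th power of the incidence matrix $M=\left(\begin{smallmatrix}1&1\\1&0\end{smallmatrix}\right)$ of $H$ equals $\left(\begin{smallmatrix}F_n&F_{n-1}\\F_{n-1}&F_{n-2}\end{smallmatrix}\right)$.

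To prove the scalar identities I would use $H(0)=01$ and $H(1)=0$, applied to $H^{n-1}$, which yield $H^n(0)=H^{n-1}(0)\,H^{n-1}(1)$ and $H^n(1)=H^{n-1}(0)$ for $n\ge 1$. Taking $\kappa_0$ gives $\kappa_0(H^n(0))=\kappa_0(H^{n-1}(0))+\kappa_0(H^{n-1}(1))$ and $\kappa_0(H^n(1))=\kappa_0(H^{n-1}(0))$; substituting the second into the first shows $n\mapsto\kappa_0(H^n(0))$ obeys the Fibonacci recursion, and since its values at $n=0$ and $n=1$ are both $1$, it equals $F_n$ in the indexing \eqref{eq:Fibo}, whence $\kappa_0(H^n(1))=F_{n-1}$. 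Taking $\kappa_1$ instead gives the same recursion for $n\mapsto\kappa_1(H^n(0))$ but with values $0$ and $1$ at $n=0,1$, so $\kappa_1(H^n(0))=F_{n-1}$ and $\kappa_1(H^n(1))=F_{n-2}$; here one checks that the conventions $F_{-1}=0$ and $F_{-2}=1$ make the cases $n=0,1$ consistent with the direct computations $H^0(1)=1$ and $H(1)=0$.

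Feeding the four values back into the reduction formula gives the first two displayed identities of the lemma. Adding them and using $F_{n+1}=F_n+F_{n-1}$ produces $|H^n(w)|=\kappa_0(H^n(w))+\kappa_1(H^n(w))=F_{n+1}\kappa_0(w)+F_n\kappa_1(w)$, and the two-step recursion $|H^n(w)|=|H^{n-1}(w)|+|H^{n-2}(w)|$ then follows by applying the Fibonacci recursion coefficientwise, the base cases $|H^0(w)|=|w|$ and $|H^1(w)|=|H(w)|$ being immediate. The only point requiring care is the bookkeeping with the shifted Fibonacci indexing across the base cases; otherwise the argument is entirely elementary, so I do not anticipate any real obstacle.
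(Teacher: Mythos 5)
Your proof is correct and follows essentially the same route as the paper: reduce to the symbol counts of $H^n(0)$ and $H^n(1)$ via additivity under concatenation, then sum to get the length formula. The only difference is that you derive those single-symbol counts by an explicit Fibonacci-recursion induction, whereas the paper simply asserts them; your bookkeeping with the conventions $F_{-2}=1$, $F_{-1}=0$ is accurate.
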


Since we have defined $F_{-2} = 1$ and $F_{-1} = 0$, see \eqref{eq:Fibo},
these formulas hold for $n = 0$ and $n=1$ as well.

\begin{proof}
Since $H^n(0)$ contains $F_{n+1}$ zeroes and $F_{n-1}$ ones,
while  $H^n(0)$ contains $F_{n-1}$ zeroes and $F_{n-2}$ ones,
the first two lines follow from concatenation.
The third line is the sum of the first two, and naturally
the recursive relation follows from the same recursive relation
for Fibonacci numbers.
\end{proof}

Since $(\K, \s, \mu_\K)$ is uniquely ergodic, and isomorphic to 
$(\SS, T_\ga, \text{Leb})$, we immediately get that
\begin{equation}\label{eq:symfreq}
\lim_{n \to +\8} \frac{\kappa_a(H^n(w))}{ |H^n(w)| } =
\begin{cases} 
|\arc{0\ga}| = \frac1\ga & \text{ if } a = 0, \\
|\arc{\ga0}| = 1-\frac1\ga & \text{ if } a = 1. 
\end{cases}
\end{equation}

\begin{lemma}\label{lem-prop-stopbloc}
Assume that $x$ and $y$ have a maximal common prefix $w$.
Then $H^{n}(x)$ and $H^{n}(y)$ coincide for $T_{n}(w)+F_{n+2}-2$ digits, where $T_{n}(w)$ is defined by 
\begin{equation}\label{eq:Tnw}
T_{0}= |w|,\ T_{1}=|H(w)|,\ T_{n+2}(w) = T_{n+1}(w)+T_{n}(w).
\end{equation}
\end{lemma}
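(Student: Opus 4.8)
\emph{Plan.} The plan is to strip off the common prefix, reducing everything to a statement about how $H^{n}$ separates two sequences that differ in their first symbol, and then to prove that statement by induction on $n$. The point is that the separation is controlled by an initial segment of the Fibonacci fixed point $\rho$. In fact this will show that $H^{n}(x)$ and $H^{n}(y)$ coincide for \emph{exactly} $T_{n}(w)+F_{n+2}-2$ digits. For the reduction: since $w$ is the maximal common prefix of $x$ and $y$, write $x=wau$ and $y=wbv$ with $a,b\in\{0,1\}$, $a\ne b$, and $u,v\in\S$. As $H$ respects concatenation, $H^{n}(x)=H^{n}(w)\,H^{n}(au)$ and $H^{n}(y)=H^{n}(w)\,H^{n}(bv)$, so the maximal common prefix of $H^{n}(x)$ and $H^{n}(y)$ is $H^{n}(w)$ followed by the maximal common prefix of $H^{n}(au)$ and $H^{n}(bv)$. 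By Lemma~\ref{lem-lengthHn} the integers $|H^{n}(w)|$ obey exactly the recursion and initial data \eqref{eq:Tnw} defining $T_{n}(w)$, so $|H^{n}(w)|=T_{n}(w)$. It therefore suffices to prove: \emph{for every $n\ge0$, every pair of distinct symbols $a\ne b$, and all $s,t\in\S$, the maximal common prefix of $H^{n}(as)$ and $H^{n}(bt)$ equals the prefix $p_{n}$ of $\rho$ of length $F_{n+2}-2$.}

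\emph{The induction.} For $n=0$ the claim holds since $p_{0}$ is empty and $as,bt$ differ at position $0$. Assume it for $n$, and fix $a\ne b$ and $s,t\in\S$. The induction hypothesis gives $H^{n}(as)=p_{n}\,c\,s'$ and $H^{n}(bt)=p_{n}\,d\,t'$ with $\{c,d\}=\{0,1\}$ and $s',t'\in\S$; applying $H$ letterwise, $H^{n+1}(as)=H(p_{n})\,H(cs')$ and $H^{n+1}(bt)=H(p_{n})\,H(dt')$. Now $H(0)=01$, $H(1)=0$, and every $H(\cdot)$ begins with $0$; hence whichever of $H(cs')$, $H(dt')$ comes from the symbol $1$ begins with $00$, while the other begins with $01$. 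So $H(cs')$ and $H(dt')$ agree exactly in their first symbol, and the maximal common prefix of $H^{n+1}(as)$ and $H^{n+1}(bt)$ is $H(p_{n})\,0$.

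\emph{Identifying $H(p_{n})\,0$ with $p_{n+1}$.} First, $H(p_{n})\,0$ is a prefix of $\rho$: since $p_{n}$ is a prefix of $\rho$, $\rho=H(\rho)=H(p_{n})\,H(\sigma^{|p_{n}|}\rho)$, and the second factor begins with $0$. Second, $|H(p_{n})\,0|=F_{n+3}-2$: iterating $H\rho=\rho$ gives $\rho=H^{n+1}(\rho)$, which begins with $H^{n+1}(0)$, and $|H^{n+1}(0)|=F_{n+2}$ by \eqref{eq:Fiboa}, so the prefix of $\rho$ of length $F_{n+2}$ is exactly $H^{n+1}(0)$; thus $p_{n}$ is $H^{n+1}(0)$ with its last two symbols removed. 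Those two symbols are ``$01$'' or ``$10$'' — this is the one mildly delicate point; it follows from $H^{m}(0)=H^{m-1}(0)H^{m-2}(0)$, which forces the terminal pair of $H^{m}(0)$ to depend only on the parity of $m$ once $m\ge3$, being ``$01$'' for $H^{1}(0)$ and ``$10$'' for $H^{2}(0)$. In particular exactly one of them is a $0$, so by Lemma~\ref{lem-lengthHn}, $\kappa_{0}(p_{n})=\kappa_{0}(H^{n+1}(0))-1=F_{n+1}-1$. Hence $|H(p_{n})|=|p_{n}|+\kappa_{0}(p_{n})=(F_{n+2}-2)+(F_{n+1}-1)=F_{n+3}-3$, so $H(p_{n})\,0$ is a prefix of $\rho$ of length $F_{n+3}-2$, i.e.\ $H(p_{n})\,0=p_{n+1}$. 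This closes the induction and, combined with the reduction, proves the lemma.

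\emph{Main obstacle.} The nontrivial issue is that a priori the length of the common prefix of $H^{n}(as)$ and $H^{n}(bt)$ could depend on the tails $s,t$: for $n\ge3$ it already runs past the blocks $H^{n}(a)$ and $H^{n}(b)$, so comparing $H^{n}(0)$ with $H^{n}(1)$ is not enough. Recognizing that this common prefix is always an initial segment of $\rho$ is what makes the recursion close; after that only the length bookkeeping and the terminal-pair parity fact remain, both routine.
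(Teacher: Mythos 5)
Your proof is correct and follows essentially the same route as the paper: strip the maximal common prefix $w$ (so $T_n(w)=|H^n(w)|$ by Lemma~\ref{lem-lengthHn}) and then exploit that $H(0)$ and $H(1)$ both begin with $0$, so each application of $H$ extends the agreement past the differing symbol by exactly one digit, with the length $F_{n+2}-2$ coming from the Fibonacci recursion. The only difference is bookkeeping: the paper reads off the propagation of the swapped blocks $01/10$ from the displayed iteration \eqref{eq:Hn0110}, whereas you make the same induction explicit by identifying the common block with the prefix $p_n$ of $\rho$ and computing $|H(p_n)0|=F_{n+3}-2$ via the $\kappa_0$-count — a slightly more detailed but equivalent argument.
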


\begin{proof}
For $x = w0\dots$ and $y = w1\dots$, we find
\begin{align*}
\begin{array}{c} w \\ w \\\end{array}
& \begin{array}{|c|}\hline 0  \\1  \\\hline \end{array}
\ \stackrel{H}{\longrightarrow}
\begin{array}{cc} H(w) & 0\ \\ H(w) & 0\ \\\end{array}
\begin{array}{|c|}\hline 1 \\0 \\\hline \end{array}
\ \stackrel{H}{\longrightarrow}\begin{array}{cccc} H^2(w) & 0 & 1 & 0\ \\ H^2(w) & 0 & 1 & 0\ \end{array}
\begin{array}{|c|}\hline 0 \\ 1 \\\hline \end{array} \\[3mm]
& \stackrel{H}{\longrightarrow}
\begin{array}{ccccccc} H^3(w) & 0 & 1 & 0 & 0 & 1 & 0\ \\
H^3(w) & 0 & 1 & 0 & 0 & 1 & 0\ \end{array}
\begin{array}{|c|}\hline 1 \\0  \\\hline \end{array}
\ \stackrel{H}{\longrightarrow} \ \cdots
\end{align*}
where we used that $H(a)$ starts with $0$ for both $a=0$ and $a=1$.
We set $T_n(w) = |H^n(w)|$, then the recursive formula \eqref{eq:Tnw}
follows as in Lemma~\ref{lem-lengthHn}.

Iterating $H$ on the words $01$ and $10$, we find:
\begin{equation}\label{eq:Hn0110}
\begin{array}{|cc|}\hline 0 & 1 \\1 & 0 \\\hline \end{array}\stackrel{H}{\longrightarrow}\begin{array}{c} 0 \\0 \\\end{array}\begin{array}{|cc|}\hline 1 & 0 \\0 & 1 \\\hline \end{array}\stackrel{H}{\longrightarrow}\begin{array}{ccc}0 & 1 & 0 \\0 & 1 & 0\end{array}\begin{array}{|cc|}\hline 0 & 1 \\1 & 0 \\\hline \end{array}
\stackrel{H}{\longrightarrow}\begin{array}{cccccc}0 & 1 & 0 & 0 & 1 & 0 \\0 & 1 & 0 & 0 & 1 & 0\end{array}
\begin{array}{|cc|}\hline 1 & 0 \\0 & 1 \\\hline \end{array}\ .
\end{equation}
Thus $|H^n(10)| = |H^n(01| = F_{n+2}$ and the common prefix
of $H^n(10)$ and $H^n(01)$ has length $F_{n+2}-2$ is precisely the same as
the common block of $H^n(w0)$ and $H^n(w0)$ between $H^n(w)$ and
the first difference.

Therefore, $H^n(x)$ and $H^n(y)$
coincide for $T_n(w) + F_{n+2}-2$ digits.
\end{proof}

\begin{corollary}\label{cor-coinc-hn-rho}
For $x \in \K$ and $n\in\N$, $H^{n}(x)$ and $\rho$ coincide for at least $F_{n+3}-2$ digits if $x\in [0]$ and for at least $F_{n+2}-2$ digits if $x\in[1]$. 
\end{corollary}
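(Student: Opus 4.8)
The plan is to read this off from Lemma~\ref{lem-prop-stopbloc} applied to the pair $(x,\rho)$, using crucially that $\rho$ is the fixed point of $H$, so that $H^{n}(\rho)=\rho$ for all $n$. If $x=\rho$ there is nothing to do, since then $H^{n}(x)=\rho$ agrees with $\rho$ in every digit. Otherwise, let $w$ be the maximal common prefix of $x$ and $\rho$, which is a finite word. Since $x$ and $\rho$ are distinct sequences over $\{0,1\}$, immediately past $w$ one of them continues with $0$ and the other with $1$, so the hypothesis of Lemma~\ref{lem-prop-stopbloc} is met (that statement being symmetric in its two points), and it gives that $H^{n}(x)$ and $H^{n}(\rho)=\rho$ coincide for $T_{n}(w)+F_{n+2}-2$ digits, with $T_{n}(w)=|H^{n}(w)|$.

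It then remains to bound $T_{n}(w)$ from below according to the first symbol of $x$. If $x\in[1]$, then since $\rho\in[0]$ the words $x$ and $\rho$ already differ in position $0$, so $w$ is the empty word, $T_{n}(w)=0$, and one gets exactly $F_{n+2}-2$. If $x\in[0]$, then $w$ begins with the symbol $0$, hence $H^{n}(w)$ begins with $H^{n}(0)$; by \eqref{eq:Fiboa} the latter has length $F_{n+1}$, so $T_{n}(w)\ge F_{n+1}$, and together with the Fibonacci recursion $F_{n+1}+F_{n+2}=F_{n+3}$ this yields coincidence for at least $F_{n+3}-2$ digits, as claimed.

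I do not expect any genuine obstacle: this is a direct corollary of the preceding lemma, and the only points needing care are bookkeeping ones --- recording $\rho\in[0]$, the fact that $|H^{n}(w)|\ge|H^{n}(0)|$ whenever $w$ begins with $0$ (because $H^{n}(w)=H^{n}(0)H^{n}(w')$), and the degenerate case $x=\rho$. One may also note in passing that the hypothesis $x\in\K$ is not actually used; the stated lower bounds $F_{n+3}-2$ and $F_{n+2}-2$ hold for every $x\in\S$ whose first symbol is $0$, respectively $1$, which is all that later parts of the paper require.
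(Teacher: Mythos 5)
Your proof is correct and follows the paper's own route: both apply Lemma~\ref{lem-prop-stopbloc} to the pair $(x,\rho)$, using $H^n(\rho)=\rho$ and $T_n(w)\ge |H^n(0)|=F_{n+1}$ when $x\in[0]$. The only (harmless) difference is in the case $x\in[1]$: you take the empty common prefix directly, while the paper notes $H(x)\in[0]$ and reuses the $[0]$ case with $n-1$; your side remark that $x\in\K$ is never needed is also accurate.
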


\begin{proof}
If $x \in [0]$, then, by Lemma~\ref{lem-prop-stopbloc}, 
$H^n(x)$ coincides with $H^n(\rho) = \rho$
for at least $T_n(0) + F_{n+2}-2$ digits.
But $T_n(0) = |H^n(0)| = F_{n+1}$, so $T_n(0) + F_{n+2}-2 = F_{n+3}-2$.

If $x \in [1]$, then $H(x) \in [0]$ and the previous argument gives that 
$H^n(x)$ coincides with $H^n(\rho) = \rho$
for at least $F_{n+2}-2$ digits.
\end{proof}

\begin{proof}[Proof of Proposition~\ref{prop-bispecialfibo}]
We iterate the blocks $0\cdot01$, $0\cdot10$ and $1\cdot01$ under $H$:\\
$$
\begin{array}{|cccc|}\hline 0 & \cdot & 0 & 1 \\ 0 & \cdot & 1 & 0 \\ 1 & \cdot & 0 & 1 \\ \hline \end{array}
\stackrel{H}{\longrightarrow}
\begin{array}{|cc|}\hline 0 & 1 \\ 0 & 1 \\ & 0  \\ \hline \end{array}\
\begin{array}{c} 0 \\0 \\ 0 \end{array}\
\begin{array}{|cc|}\hline 1 & 0 \\ 0 & 1 \\  1 & 0 \\ \hline 
\end{array}\stackrel{H}{\longrightarrow}
\begin{array}{|ccc|}\hline \dots\!\!\! & 1 & 0 \\ \dots\!\!\! & 1 & 0 \\ & 0 & 1 \\ \hline \end{array}\
\begin{array}{ccc} 0 & 1 & 0 \\ 0 & 1 & 0 \\ 0 & 1 & 0 \end{array}\
\begin{array}{|cc|}
\hline 1 & 0 \\ 0 & 1 \\  1 & 0 \\ \hline  
\end{array}
\stackrel{H}{\longrightarrow} \cdots\ , \\
$$
so the common central block here is bi-special, and it is the same 
as the common block $v$ of $H^n(01)$ and $H^n(10)$ 
of length $F_{n+2}-2$ in the proof of Lemma~\ref{lem-prop-stopbloc}.
Thus we have found the bi-special word of length $F_{n+2}-2$, and every
prefix and suffix of $v$ is left and right-special respectively.
The fact that these are the only bi-special words can be derived from 
the Rauzy graph for this Sturmian shift, see
\eg \cite[Sec. 1]{arnoux-rauzy}.
In their notation, there is a bi-special word of length $k$
if the two special nodes in the Rauzy graph coincide: $D_k = G_k$.
The lengths of the two ``buckles'' of non-special nodes between $D_k = G_k$
are two consecutive Fibonacci numbers minus one, as follows from the 
continued fraction expansion 
$$
\ga=1+\frac1{1+\frac1{1+\ddots}}.
$$
Therefore, the complexity satisfies
$$
k+1 = p(k) = \#\{ \text{nodes of Rauzy graph of order } k\}
= F_n-1 + F_{n-1}-1 + 1,
$$
so indeed only the numbers $k = F_{n+1}-2$ can be the lengths of bi-special 
words.
\end{proof}

\subsection{Iterations of the renormalization operator}

The renormalization operator  for potentials can be rewritten under as 
(recall the definition of $F_{n}^{a}$, $a=0,1$, from \eqref{eq:Fiboa})
\begin{equation}\label{equ-def-cr}
\CR V|_{[a]} = \sum_{j=0}^{F^{a}_{1}-1} V \circ \sigma^j \circ H|_{[a]}.
\end{equation}
This general formula may be extended to other substitutions and leads to an expression for $\CR^{n}V$.  The main result here is Lemma~\ref{lem:RkV}, where we show that
\begin{equation}\label{eq:RnV}
(\CR^n V)(x) = \sum_{j=0}^{F_{n^{*}}-1} V \circ \sigma^j \circ H^n(x),
\end{equation}
where
\begin{equation}\label{eq:n*}
n^* = \begin{cases}
n+1 & \text{ if } x \in [0],\\
n & \text{ if } x \in [1].
\end{cases}
\end{equation}
The substitution $H$ solves a renormalization equation of the form \eqref{equ1-renorm}. If $x=0x_{1}\ldots$, then $H(x)=01H(x_{1})\ldots$ and $\s^{2}\circ H(x)=H\circ \s(x)$. If $x=1x_{1}\ldots$ then we simply have 
$\s\circ H(x)=H\circ \s(x)$. The renormalization equation is thus more complicated than for the constant length case. We need an expression for iterations of $H$ and $\s$. 
\begin{lemma}\label{lem:commute_shift_H}
Given $k \ge 0$ and $a=0,1$, let 
$w = w_1w_2\dots w_{F^{a}_{k}} = H^k(a)$.
Then for every $0 \le i < F^{a}_{k}$ we have
$$
H \circ \sigma^i \circ H^k|_{[a]} = \sigma^{| H(w_1\dots w_i)|} \circ H^{k+1}|_{[a]}.
$$
\end{lemma}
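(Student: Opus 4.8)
The plan is to prove the identity pointwise: fix $a \in \{0,1\}$ and an arbitrary $x = a x_1 x_2 \dots \in [a]$, and show that the two infinite words $H \circ \sigma^i \circ H^k(x)$ and $\sigma^{|H(w_1\cdots w_i)|}\circ H^{k+1}(x)$ agree symbol by symbol. The whole argument rests on the single structural fact that $H$ respects concatenation, $H(uv) = H(u)H(v)$, extended to the one-sided infinite concatenations occurring here, together with $H^{k+1}(a) = H(H^k(a)) = H(w)$.

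First I would unwind the left-hand side. Since $H^k$ is a substitution, $H^k(x) = H^k(a)\,H^k(x_1)\,H^k(x_2)\cdots = w_1\cdots w_{F^a_k}\, H^k(x_1)H^k(x_2)\cdots$, so the leading $F^a_k$ symbols of $H^k(x)$ are exactly $w$. Because $0 \le i < F^a_k$, the shift $\sigma^i$ deletes the prefix $w_1\cdots w_i$ and leaves
$$
\sigma^i \circ H^k(x) = w_{i+1}\cdots w_{F^a_k}\, H^k(x_1)H^k(x_2)\cdots ,
$$
and applying $H$ and using concatenation gives
$$
H \circ \sigma^i \circ H^k(x) = H(w_{i+1}\cdots w_{F^a_k})\, H^{k+1}(x_1)H^{k+1}(x_2)\cdots .
$$
Next I would unwind the right-hand side: $H^{k+1}(x) = H(w)\,H^{k+1}(x_1)H^{k+1}(x_2)\cdots$, and splitting $H(w) = H(w_1\cdots w_i)\,H(w_{i+1}\cdots w_{F^a_k})$ exhibits $H(w_1\cdots w_i)$ as a prefix of $H^{k+1}(x)$ of length $|H(w_1\cdots w_i)|$. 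Deleting precisely that prefix,
$$
\sigma^{|H(w_1\cdots w_i)|}\circ H^{k+1}(x) = H(w_{i+1}\cdots w_{F^a_k})\, H^{k+1}(x_1)H^{k+1}(x_2)\cdots ,
$$
which is identical to the expression obtained for the left-hand side. As $x \in [a]$ was arbitrary, this proves the lemma on $[a]$.

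There is no real obstacle; the only point to watch is the role of the hypothesis $i < F^a_k$ (in fact even $i \le F^a_k$ works, with an empty suffix when $i = F^a_k$): it guarantees that $w_1\cdots w_i$ is an honest prefix of the leading block $w = H^k(a)$, so that on both sides the truncation stays inside the block produced by the first letter $a$, and the tails $H^k(x_j)$, resp. $H^{k+1}(x_j)$, for $j \ge 1$ are carried along unchanged, allowing the two sides to be compared term by term. The identity $H^{k+1}(a) = H(H^k(a))$ is what makes the leading blocks agree after $H$ is applied, and is the reason the shift length on the right is measured as $|H(w_1\cdots w_i)|$ rather than $i$.
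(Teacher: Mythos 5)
Your proof is correct: the pointwise computation using $H(uv)=H(u)H(v)$ (extended to infinite concatenations) and $H^{k+1}(a)=H(H^k(a))$ is exactly the structural fact the paper relies on, and your bookkeeping of the prefix $H(w_1\dots w_i)$ is accurate. The paper dresses the argument as an induction on $k$, but its inductive step is in substance your direct computation, so the two proofs coincide; yours is if anything written out more completely.
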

\begin{proof}
For $k = 0$ this is true by default and for $k= 1$, this is precisely
what is done in the paragraph before the lemma.
Let us continue by induction, assuming that the statement is true for $k$.
Then $\sigma^i$ removes the first $i$ symbols of $w = H^k(a)$,
which otherwise, under $H$, would be extended to a word of
length $|H(w_1\dots w_i)|$. We need this number of shifts
to remove $H(w_1\dots w_i)$ from $H([w]) = H^{k+1}([a])$.
\end{proof}


\begin{lemma}\label{lem:RkV}
For every $k \ge 0$ and $a=0,1$, we have
$$
\CR^kV|_{[a]} = S_{F^{a}_{k}}V \circ H^k|_{[a]},
$$
where $S_nV = \sum_{i=0}^{n-1} V \circ \sigma^i$ denotes the $n$-th ergodic sum.
\end{lemma}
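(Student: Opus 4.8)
The plan is to prove the identity by induction on $k$, using Lemma~\ref{lem:commute_shift_H} as the engine that lets one commute the shift past $H$. The base case $k=0$ is trivial, since $\CR^0 V = V = S_1 V \circ H^0$, and the case $k=1$ is just the rewritten definition \eqref{equ-def-cr} of $\CR$, because $S_{F^a_1}V = \sum_{j=0}^{F^a_1-1} V\circ\sigma^j$ and $F^0_1 = |H(0)| = 2$, $F^1_1 = |H(1)| = 1$ match the two cases of the operator. So the real content is the inductive step.

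For the inductive step, assume $\CR^k V|_{[a]} = S_{F^a_k} V \circ H^k|_{[a]}$ for both $a=0,1$. I would compute $\CR^{k+1}V|_{[a]} = \CR(\CR^k V)|_{[a]}$. Writing $w = w_1\dots w_{F^a_k} = H^k(a)$ and using the definition of $\CR$ on the cylinder $[a]$ (which, after one application of $H$, lands in $[w]$, hence in $[0]$ since $H(a)$ always starts with $0$), the outer $\CR$ contributes $V\circ\sigma\circ H + V\circ H$ composed appropriately — but it is cleaner to apply $\CR$ to the ergodic-sum form directly. One gets
$$
\CR^{k+1}V|_{[a]} = \CR\Bigl( S_{F^a_k} V \circ H^k \Bigr)\Big|_{[a]}
= \sum_{i=0}^{F^a_k - 1} \CR\bigl(V\circ\sigma^i\circ H^k\bigr)\Big|_{[a]},
$$
using linearity of $\CR$. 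Now for each $i$, the term $\CR(V\circ\sigma^i\circ H^k)|_{[a]}$ must be re-expressed: applying the definition \eqref{equ-def-cr} of $\CR$ and then Lemma~\ref{lem:commute_shift_H} to rewrite $H\circ\sigma^i\circ H^k|_{[a]} = \sigma^{|H(w_1\dots w_i)|}\circ H^{k+1}|_{[a]}$, one finds that the $i$-th term becomes a block of consecutive ergodic-sum terms $\sum_j V\circ\sigma^j\circ H^{k+1}|_{[a]}$ whose starting index is $|H(w_1\dots w_i)|$ and whose length is $F^{w_i}_1 \in \{1,2\}$ (two shifts if $w_i = 0$, one if $w_i=1$, coming from the two cases in the definition of $\CR$).

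The key combinatorial point — and the step I expect to be the main obstacle to state cleanly — is that as $i$ runs from $0$ to $F^a_k - 1$, these blocks of indices $\bigl[\,|H(w_1\dots w_i)|,\ |H(w_1\dots w_i)| + F^{w_i}_1\,\bigr)$ tile the interval $[0, |H(w)|)$ exactly, with no gaps and no overlaps. This is because $|H(w_1\dots w_{i+1})| = |H(w_1\dots w_i)| + |H(w_{i+1})| = |H(w_1\dots w_i)| + F^{w_{i+1}}_1$, so consecutive blocks abut perfectly, the first starts at $|H(\emptyset)| = 0$, and the last ends at $|H(w_1\dots w_{F^a_k})| = |H(w)| = |H^{k+1}(a)| = F^a_{k+1}$. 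Hence the union over $i$ of these blocks of ergodic-sum terms is exactly $\sum_{j=0}^{F^a_{k+1}-1} V\circ\sigma^j\circ H^{k+1}|_{[a]} = S_{F^a_{k+1}}V\circ H^{k+1}|_{[a]}$, which is the claim for $k+1$. I would present the telescoping identity $|H(w_1\dots w_{i+1})| - |H(w_1\dots w_i)| = F^{w_{i+1}}_1$ explicitly, since it is what makes the reindexing work, and note that $F^a_{k+1} = |H^{k+1}(a)|$ closes the count via Lemma~\ref{lem-lengthHn} (or \eqref{eq:Fiboa}).
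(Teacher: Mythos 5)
Your overall skeleton --- induction on $k$, base cases $k=0,1$, Lemma~\ref{lem:commute_shift_H} as the commutation tool, and the telescoping of index blocks of lengths $F^{w_i}_1$ that tile $[0,F^{a}_{k+1})$ --- is exactly the paper's proof. However, the pivotal manipulation in your inductive step is set up in a way that does not go through as written. You expand $\CR^{k+1}V|_{[a]}$ as $\CR(\CR^k V)|_{[a]}$ and substitute the induction hypothesis $\CR^kV|_{[a]}=S_{F^{a}_{k}}V\circ H^k|_{[a]}$ inside the outer $\CR$. That substitution is not legitimate: $\CR$ evaluated at $x\in[a]$ uses the values of its argument at $H(x)$ and (if $a=0$) at $\sigma\circ H(x)$, and these points lie in cylinders where $\CR^kV$ is given by an ergodic sum of a \emph{different} length (for $x\in[0]$ one has $\sigma H(x)\in[1]$, where the correct length is $F^{1}_{k}=F_k$, not $F^{0}_{k}=F_{k+1}$); applying $\CR$ literally to the single formula $S_{F^{a}_{k}}V\circ H^k$ would produce $F^{a}_{1}\cdot F^{a}_{k}$ terms, e.g.\ $2F_{k+1}\neq F_{k+2}$ for $a=0$. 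Relatedly, after this outer expansion the compositions that appear are $V\circ\sigma^i\circ H^k\circ\sigma^j\circ H$, to which Lemma~\ref{lem:commute_shift_H} (which handles $H\circ\sigma^i\circ H^k$, i.e.\ a single $H$ applied \emph{after} $\sigma^i\circ H^k$) does not apply; and the case split of this outer $\CR$ is governed by $a$, not by $w_i$, so the block length $F^{w_i}_1$ cannot come ``from the two cases in the definition of $\CR$'' in your decomposition.

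The fix is the other order, which is what the paper does: write $\CR^{k+1}V=\CR^k(\CR V)$ and apply the induction hypothesis to the potential $\CR V$, giving $\CR^{k+1}V|_{[a]}=\sum_{i=0}^{F^{a}_{k}-1}(\CR V)\circ\sigma^i\circ H^k|_{[a]}$. Each point $\sigma^i\circ H^k(x)$, $x\in[a]$, begins with the symbol $w_{i+1}$ of $w=H^k(a)$, so expanding $\CR V$ there via \eqref{equ-def-cr} yields exactly $|H(w_{i+1})|\in\{1,2\}$ terms, and now the composition $H\circ\sigma^i\circ H^k|_{[a]}$ appears literally, so Lemma~\ref{lem:commute_shift_H} turns the $i$-th contribution into the block of shifts of $H^{k+1}(x)$ starting at $|H(w_1\dots w_i)|$ and of length $|H(w_{i+1})|$. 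From there your telescoping identity and the count $|H^{k+1}(a)|=F^{a}_{k+1}$ finish the argument exactly as in the paper. (One can salvage your outer decomposition, but only by applying the hypothesis separately at $H(x)$ and $\sigma H(x)$ with the respective lengths $F^{0}_{k}$ and $F^{1}_{k}$, together with the extra identity $H^k\circ\sigma\circ H|_{[0]}=\sigma^{F_{k+1}}\circ H^{k+1}|_{[0]}$, which is not what Lemma~\ref{lem:commute_shift_H} states.)
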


\begin{proof}
For $k = 0$ this is true by default. For $k = 1$, this follows by the definition of the renormalization operator $\CR$.
Let us continue by induction, assuming that the statement is true for $k$.
Write $w = H^k(a)$ and $t_i = |H(w_i)| = F_{w_i}$.
Then
\begin{eqnarray*}
\CR^{k+1}V|_{[a]} &=& (\CR V) \circ S_{F^a_k}V \circ H^k|_{[a]} \hskip 1cm \text{\small (Induction assumption)}\\
 &=& \sum_{i=0}^{F^{a}_{k}-1}  \left( \sum_{j=0}^{t_i-1} V \circ \sigma^j \circ H \right)  \sigma^i\circ H^k|_{[a]}\hskip 1cm \text{\small (by formula \eqref{equ-def-cr})}\\
 &=& \sum_{i=0}^{F^{a}_{k}-1} \left( \sum_{j=0}^{t_i-1} V \circ \sigma^{j+|H(w_1\dots w_i)|} \circ H \right) \circ H^k|_{[a]}  \hskip 1cm\text{\small (by Lemma~\ref{lem:commute_shift_H})}\\
 &=& \sum_{l=0}^{F^{a}_{k+1}-1} V \circ \sigma^l \circ H^{k+1}|_{[a]}, 
\end{eqnarray*}
as required.
\end{proof}



\subsection{Special words are sources of accidents}
Overlaps of  $\rho$ with itself are strongly related to bi-special words. They are of prime importance to determine the fixed points of $\CR$ and their 
stable leaves, see \eg formula \eqref{equ-crkV} below. Dynamically, they correspond to what we call {\em accident} in the time-evolution of the distance between the orbit and $\K$.

For most $x$ close to $\K$, $d(\s(x),\K) = 2d(x,\K)$, but
the variation of $d(\s^{j}(x),\K)$ is not always monotone with respect to $j$. When it decreases, it generates an accident:
\begin{definition}\label{def-accident}
Let $x\in\S$ and $d(x,\K)=2^{-n}$. If $d(\s(x),\K)\le 2^{-n}$, we say that we have an {\em accident} at $\s(x)$. 
If there is an accident at $\s^{j}(x)$, then we shall simply say we have an accident at $j$. 
\end{definition}

The next lemma allows us to detect accidents. 
\begin{lemma}\label{lem-accident-bispecial}
Let $x=x_{0}x_{1}\ldots$ coincide with some $y\in\K$ for $d$ digits. Assume that the first accident occurs at $b$. Then $x_{b}\ldots x_{d-1}$ is a bi-special word in $\CA_{\K}$. Moreover, the word $x_{0}\ldots x_{d-1}$ is not right-special. 
\end{lemma}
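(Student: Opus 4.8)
The statement to prove is Lemma~\ref{lem-accident-bispecial}: if $x=x_0x_1\dots$ coincides with some $y\in\K$ for exactly $d$ digits, and the first accident occurs at position $b$ (meaning $d(\s^b(x),\K)\le 2^{-n}$ where $2^{-n}=d(\s^b(x),\K)$ would be... more precisely at the first $b$ where $d$ fails to grow), then $x_b\dots x_{d-1}$ is a bi-special word in $\CA_\K$, and $x_0\dots x_{d-1}$ is not right-special.

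The plan is to unwind the two relevant notions — ``coincides with $y\in\K$ for $d$ digits'' and ``accident'' — into statements about which words lie in $\CA_\K$, and then read off bi-speciality directly. First I would record the baseline behaviour: if $d(x,\K)=2^{-n}$ then $x_0\dots x_{n-1}\in\CA_\K$ but $x_0\dots x_{n-1}x_n\notin\CA_\K$ (that is the meaning of the maximal coincidence length); and under one application of $\s$, generically the coincidence length drops by one, so $d(\s(x),\K)=2^{-(n-1)}$. An \emph{accident} at $j$ is by Definition~\ref{def-accident} the failure of this: $d(\s^j(x),\K)$ is at least as large as $d(\s^{j-1}(x),\K)$ rather than half of it. So if $b$ is the first accident, then for $j<b$ the distance halves each step, and at step $b$ it does not; concretely this means the suffix $x_b x_{b+1}\dots$ coincides with \emph{some} point of $\K$ for \emph{strictly more} digits than the naive count $d-b$ predicts — i.e. the finite word $x_b\dots x_{d-1}$, which has length $d-b$, extends inside $\CA_\K$ in a way that $x_0\dots x_{d-1}$ does not.

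The key step is then the following dichotomy argument. The word $u:=x_0\dots x_{d-1}\in\CA_\K$, but $u x_d\notin\CA_\K$; since $\CA_\K$ comes from a Sturmian system, $u$ has exactly one right-extension in $\CA_\K$ unless $u$ is right-special, in which case it has two (namely $u0$ and $u1$). Because $ux_d\notin\CA_\K$, the unique legal right-extension of $u$ is $u\bar x_d$ — and in particular $u$ is \emph{not} right-special (if it were, both $u0$ and $u1$ would be legal, contradicting $ux_d\notin\CA_\K$). That disposes of the last assertion. Now apply the same reasoning to the suffix $w:=x_b\dots x_{d-1}$: by the accident at $b$, $w$ \emph{does} extend to the right inside $\CA_\K$ by $x_d$ (the coincidence of $\s^b(x)$ with $\K$ runs past position $d-b$), yet $w$ also has the right-extension $\bar x_d$ legal — this is where I need to show $w\bar x_d\in\CA_\K$ too, which should follow from $u\bar x_d\in\CA_\K$ (the unique extension of $u$) because $w$ is a suffix of $u$ and suffixes of words in $\CA_\K$ that are themselves in $\CA_\K$ inherit legality of extensions in a Sturmian shift. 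Hence $w$ is right-special. For left-speciality, I would argue symmetrically: $w$ sits at the point of $x$ where the distance-to-$\K$ function stops decreasing, so both $x_{b-1}w$ (a prefix of $u$, hence legal) and a competing word $\bar x_{b-1}w$ must be legal because the accident means $\s^b(x)$ re-aligns with a different part of $\K$ than the natural continuation; making this precise is essentially the observation that an accident forces an overlap of $\rho$ with itself, and overlaps of Sturmian words are governed exactly by bi-special words (as the paragraph introducing this subsection asserts). Combining, $w$ is bi-special.

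\textbf{Main obstacle.} The delicate point is not the bookkeeping with $\s$ and distances but pinning down \emph{why an accident yields a genuine two-sided branching} rather than just a one-sided one. The right-special direction is relatively direct from the coincidence-past-$d$ phenomenon; the left-special direction requires understanding that the first accident cannot happen ``by accident'' — it must be caused by the suffix $x_b\dots$ matching a shifted copy of $\rho$, and the set of lengths at which $\rho$ overlaps itself nontrivially is precisely $\{F_m-3\}$, i.e. the bi-special lengths from Proposition~\ref{prop-bispecialfibo}. I expect the cleanest route is to invoke the Rauzy-graph / special-node description already cited: being at an accident means we are sitting at a branching node of the Rauzy graph reached from both sides, which is exactly the definition of a bi-special factor. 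I would therefore spend the bulk of the proof carefully translating ``first accident at $b$, coincidence length $d$'' into ``$x_b\dots x_{d-1}$ labels a closed path through the unique bi-special node of the order-$(d-b)$ Rauzy graph,'' and let the Sturmian structure theory do the rest.
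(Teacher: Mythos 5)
Your treatment of two of the three assertions is correct and essentially the paper's own argument: since $d(x,\K)=2^{-d}$, the word $x_0\ldots x_{d-1}x_d$ is not in $\CA_{\K}$, so $x_0\ldots x_{d-1}$ cannot be right-special; and for $w:=x_b\ldots x_{d-1}$, the accident at $b$ gives $x_b\ldots x_d\in\CA_{\K}$ while $w y_d$ is a factor of $y$ with $y_d=\ol{x_d}$, so $w$ is right-special (your detour through "the unique right extension of $u$" is unnecessary but harmless).

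The genuine gap is the left-special half, which you yourself flag as the "main obstacle" and never close. Neither of your proposed exits works as stated: appealing to "overlaps of $\rho$ with itself are governed by bi-special words" is circular, since that informal remark is precisely what this lemma is meant to substantiate; and the Rauzy-graph plan rests on the unproved assertion that the node reached at the accident has in-degree two, i.e.\ exactly the left-speciality in question. Note that in a Sturmian shift right-special does not imply left-special (in the Fibonacci shift $10$ is right- but not left-special), so the out-degree information you have established cannot be upgraded for free, and Proposition~\ref{prop-bispecialfibo} cannot be invoked before bi-speciality is known. The missing step is short and uses that $b$ is the \emph{first} accident: for $j<b$ the distance doubles at each shift, so $d(\s^{b-1}(x),\K)=2^{-(d-b+1)}$ exactly, which means $x_{b-1}x_b\ldots x_d\notin\CA_{\K}$, whereas the accident gives $x_b\ldots x_d\in\CA_{\K}$. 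Since $\K$ is minimal, $x_b\ldots x_d$ has some left extension in $\CA_{\K}$, which must therefore be $\ol{x_{b-1}}\,x_b\ldots x_d$; hence $\ol{x_{b-1}}w\in\CA_{\K}$, and together with $x_{b-1}w=y_{b-1}\ldots y_{d-1}\in\CA_{\K}$ this shows $w$ is left-special, completing bi-speciality. This two-line argument is exactly what the paper's figure-based proof encodes via the two shadowing points $y$ and $y'$, whose occurrences of $w$ carry different prefixes and suffixes.
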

\begin{proof}
By definition of accident, there exists $y$ and $y'$ in $\K$ such that $d(x,\K)=d(x,y)$ and $d(\s^{b}(x),\K)=d(\s^{b}(x),y')$. 

\begin{figure}[htbp]
\begin{center}
\includegraphics[scale=0.7]{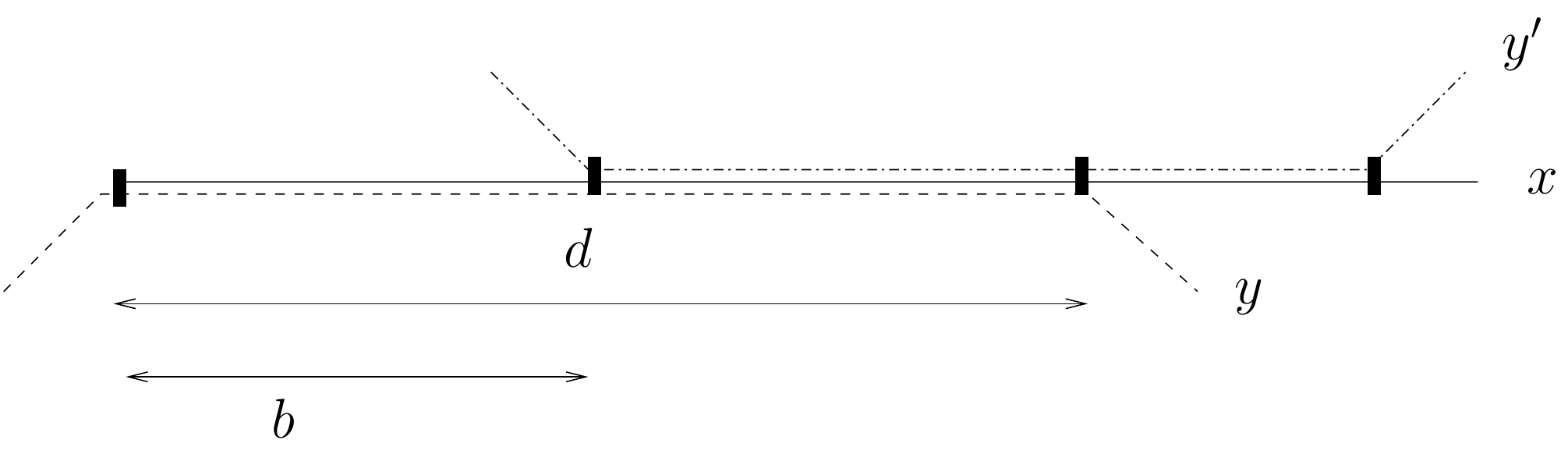}
\caption{Accident and bi-special words}
\label{fig-accident}
\end{center}
\end{figure}
Figure~\ref{fig-accident} shows that the word $x_{b}\ldots x_{d-1}$ is bi-special because its two extensions $y$ and $y'$ in $\K$ 
have different suffix and prefix for this word. 

It remains to prove that $x_{0}\ldots x_{d-1}$ is not right-special. If it was, then $x_{0}\ldots x_{d-1}x_{d}=y_{0}\ldots y_{d-1}\ol{y_{d}}$ 
would a $\K$-admissible word, thus $d(x,\K)\le 2^{-(d+1)}\neq 2^{-d}$. 
\end{proof}


\section{Proof of Theorem~\ref{theo-fixedpoint}}\label{sec-prooftheofix}
\subsection{Control of the accidents under iterations of $\CR$}
Next we compute $\CR^{n}V$ and show that accidents do not crucially
perturb the Birkhoff sum involved. This will follow from Corollaries~\ref{coro-dHnK} and \ref{cor-coinc-hn-rho}.

Note that Lemma~\ref{lem-prop-stopbloc} shows that $H$ is one-to-one. The next proposition explains the relation between the attractor $\K$ and its image by $H$. 

\begin{proposition}\label{prop-K-H}
The subshift $\K$ is contained in $H(\K) \cup \s\circ H(\K)$. More precisely, if $[0]\cap \K \subset H(\K)$ and $x\in[1]\cap \K \subset \s\circ H(\K)$.  
\end{proposition}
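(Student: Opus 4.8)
The plan is to exploit the Sturmian description of $\K$ from Section~\ref{sec-H-K-R}, namely the conjugacy $\pi\circ T_\ga = \s\circ\pi$ between $(\SS,T_\ga)$ and $(\K,\s)$, where $0$ codes the arc $\arc{0\ga}$ and $1$ codes $\arc{\ga 0}$. On the circle, the substitution $H$ corresponds to the standard ``rescaling'' map: $H$ induces a map on $\SS$ that conjugates $T_\ga$ to $T_\ga$ restricted/rescaled on one of the two subarcs $\arc{0\ga}$ or $\arc{\ga0}$, because $H$ replaces $0\mapsto 01$, $1\mapsto 0$ and the induced first-return map of $T_\ga$ to a suitable subinterval is again a rotation by the golden mean (continued-fraction self-similarity). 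Concretely, I would first show that $\pi(H(\K))$ and $\pi(\s\circ H(\K))$ are exactly the two complementary arcs whose union is $\SS$ up to the countable orbit of $\ga$, and that the coding of a point of $\SS$ lying in the ``$H$-arc'' starts with $0$ while the coding of a point in the ``$\s H$-arc'' starts with $1$.

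The cleanest combinatorial route, avoiding too much circle geometry, is the following. Every $x\in\K$ is a limit of words appearing in $\rho$, and every sufficiently long word appearing in $\rho$ is a subword of $H^n(0)$ for large $n$; since $H^n(0)$ itself is a concatenation of blocks $H(0)=01$ and $H(1)=0$ (namely $H^n(0)=H(H^{n-1}(0))$ and $H^{n-1}(0)$ is a word over $\{0,1\}$), every word in $\rho$ that starts at a ``block boundary'' of the $H$-decomposition is a prefix of $H(z)$ for some $z\in\K$. So the key step is a \emph{synchronization} or \emph{recognizability} statement for the Fibonacci substitution: there is a unique way to cut any point of $\K$ into $H$-blocks ($01$'s and $0$'s), and the cut points are intrinsically determined. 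Granting recognizability (which is classical for primitive aperiodic substitutions, and here can also be read off from the Sturmian/Rauzy-graph picture already invoked for Proposition~\ref{prop-bispecialfibo}), I would argue: given $x\in\K$, look at its unique $H$-block decomposition. If $x$ begins at a block boundary, then $x=H(z)$ with $z\in\K$ (the sequence of blocks read off gives $z$, and $z\in\K$ because the block sequence is again a point whose language is Fibonacci-admissible), and moreover the first block is either $01$ or $0$, so $x\in[0]\cap H(\K)$. If $x$ begins one symbol \emph{inside} a block, that block must be $01$ (a single $0$-block has no interior), so $x$ begins with the symbol $1$, i.e. $x\in[1]$, and $x=\s(H(z))$ where $z\in\K$ is obtained by prepending the split $01$-block's originating symbol $0$ to the block decomposition of $\s(x)$; hence $x\in[1]\cap \s\circ H(\K)$. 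This proves both $\K\subset H(\K)\cup\s\circ H(\K)$ and the refined statement $[0]\cap\K\subset H(\K)$, $[1]\cap\K\subset\s\circ H(\K)$.

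For the inclusions $[0]\cap\K\subset H(\K)$ I would also give the short direct half: $H(\K)\subset[0]$ trivially since $H(0)$ and $H(1)$ both start with $0$, so it remains to show every $x\in[0]\cap\K$ is actually hit, which is exactly the ``begins at a block boundary'' case above — here one uses that a point of $\K$ starting with $0$ cannot start strictly inside a $01$-block (that interior position carries a $1$), so a $0$-starting point is forced to sit at a block boundary. Symmetrically, $\s\circ H(\K)\subset[1]$ because the only block that, after deleting its first symbol, still has letters is $01$, leaving $1$; and conversely a $1$-starting point of $\K$ must be the interior of a $01$-block, giving $x\in\s H(\K)$. Finally, $H(\K)\cap\s H(\K)$-type overlaps are handled by disjointness of $[0]$ and $[1]$, so the union statement is immediate once the two refined inclusions are in place.

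The main obstacle is making \emph{recognizability} rigorous without a long detour: one must be sure that the $H$-block decomposition of a point of $\K$ exists and is unique, equivalently that consecutive cut points cannot be ``slid.'' I would handle this by citing the standard recognizability theorem for aperiodic primitive substitutions (Mossé's theorem), or, staying self-contained within the paper's framework, by noting that in the Sturmian picture the cut points are precisely the preimages under $\pi$ of a fixed subarc endpoint's orbit, which are well-defined off the countable exceptional set, and then checking the finitely many boundary cases by hand using the explicit initial segment of $\rho$. A secondary, minor point is bookkeeping the ``prepend a $0$'' operation showing the reconstructed $z$ genuinely lies in $\K$ rather than merely in $\S$; this follows since its language is a sub-language of the Fibonacci language, as every finite subword of $z$ maps under $H$ into a subword of $x\in\K$ and $H$ is injective on words (Lemma~\ref{lem-prop-stopbloc}), so no forbidden pattern can appear in $z$.
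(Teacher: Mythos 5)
Your overall route is essentially the paper's: cut $x\in\K$ uniquely into the blocks $H(0)=01$ and $H(1)=0$, desubstitute to get $z$ with $H(z)=x$, and prepend a $0$ (i.e.\ work with $0x$) when $x\in[1]$. Two remarks on the cutting step. First, recognizability \`a la Moss\'e is an unnecessary detour here: since $11$ is forbidden in $\K$, every $1$ must be glued to the preceding $0$, so the cut points are exactly the positions of the $0$'s and existence/uniqueness of the block decomposition is immediate — this is precisely how the paper argues. Second, your side remark that $\s\circ H(\K)\subset[1]$ is false (for $z\in[1]\cap\K$ one has $\s\circ H(z)=H(\s z)\in[0]$), but the proposition only needs the converse-type inclusions, which you do address.

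The genuine gap is the final step, $z\in\K$. You claim the language of $z$ is contained in $\CA_{\K}$ because every factor $w$ of $z$ has $H(w)\in\CA_{\K}$ and $H$ is injective on words. That inference is invalid: injectivity of $H$ does not prevent $H$ of a forbidden word from occurring in $\rho$. Concretely, $H(11)=00$ is a factor of $\rho$ while $11$ is not, so ``$H(w)$ appears in $\rho$'' does not imply ``$w$ appears in $\rho$''. What rescues the conclusion is alignment: the occurrences of $H(w)$ your construction produces are aligned with the block structure, and an occurrence in $\rho=H(\rho)$ that is aligned with the cuts does desubstitute to an occurrence of $w$ in $\rho$ (the bad example $00$ is never an aligned image of $11$, since two consecutive single-$0$ blocks would force $000$ or desubstitute to the forbidden $11$). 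To make this rigorous you could note that every block starts with $0$ and its type ($0$ versus $01$) is determined by the following symbol, so considering $H(w)$ together with the next letter of $x$ pins down the block structure of any occurrence in $\rho$; alternatively, do what the paper does: write $x=\lim_n\s^{k_n}(\rho)$ with $x_0=0$, note that then $\rho_{k_n}=0$ for large $n$, hence $k_n$ is a cut point of $\rho=H(\rho)$ and $\s^{k_n}(\rho)=H(\s^{l_n}(\rho))$ for suitable $l_n$, so that $z=\lim_n\s^{l_n}(\rho)\in\K$. As written, however, the assertion ``no forbidden pattern can appear in $z$'' does not follow from what you prove.
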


\begin{proof}
First note that Lemma~\ref{lem-prop-stopbloc} shows that $H$ is one-to-one. We also recall that the word $11$ is forbidden in $\K$. Hence, each digit $1$
in $x=x_{0}x_{1}x_{2}\ldots\in \K$ is followed and preceded by a digit $0$ (unless the $1$ is in first position). 

Assuming $x_{0}=0$, we can unique split $x$ into blocks of the form 
$0$ and $01$.
In this splitting, we replace 
each single $0$ by $1$ and each pair $01$ by $0$. 
This produces a new word, say $y$, and by construction, $H(y)=x$. 
This operation is denoted by $H^{-1}$. 
It can be used on finite words too, provided that the last digit is $1$. 
If $x_{0}=1$, we repeat the above construction with $0x$, and $x=\s\circ H(y)$. 

It remains to prove that $y\in\K$. 
For every $x\in\K$, there is a sequence $k_{n}\to\8$ 
such that $\s^{k_{n}}(\rho)\to x$. Assume again that $x_0 = 0$.
Then we can find a sequence $l_n \sim k_n/\ga$ such that 
$H \circ \s^{l_n}(\rho)=\s^{k_{n}}(\rho)$.
Therefore $\lim_n \s^{l_n}(\rho) \in \K$, and this limit is indeed the sequence
$y$ that satisfies $H(y) = x$.
Finally, for $x_0 = 1$, we repeat the argument with $0x$.
\end{proof}


\begin{corollary}\label{coro-dHnK}
If $d(x,\K)=d(x,y)$ with $y\in \K$, then $d(H^{n}(x),\K)=d(H^{n}(x),H^{n}(y))$ for $n\ge 0$.
\end{corollary}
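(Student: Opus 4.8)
The plan is to reduce the statement to a single application of Lemma~\ref{lem-prop-stopbloc} together with Proposition~\ref{prop-K-H}, and to induct on $n$. Suppose $d(x,\K) = d(x,y) = 2^{-d}$ with $y \in \K$; this means $x$ and $y$ share a maximal common prefix $w$ of length $d$ (maximal in the sense that no point of $\K$ agrees with $x$ for more than $d$ digits, and in particular $x_d \ne y_d$). By Lemma~\ref{lem-prop-stopbloc}, $H^n(x)$ and $H^n(y)$ coincide for exactly $T_n(w) + F_{n+2} - 2$ digits, where $T_n(w) = |H^n(w)|$ and the first disagreement comes from iterating the block $01$ versus $10$ (one of the two tails $x_d x_{d+1}\dots$, $y_d y_{d+1}\dots$ begins with $0$, the other with $1$, and each of $H(0), H(1)$ begins with $0$). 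So it suffices to show that no point of $\K$ agrees with $H^n(x)$ for more than $T_n(w) + F_{n+2} - 2$ digits, i.e. that $d(H^n(x), \K) = 2^{-(T_n(w)+F_{n+2}-2)}$, the distance being realized by $H^n(y) \in \K$ (note $H^n(y) \in \K$ since $\K$ is $H$-invariant, or more precisely since $H(\K) \subset \K$ by Proposition~\ref{prop-K-H}).

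The key step is therefore the reverse inequality: if some $z \in \K$ agreed with $H^n(x)$ for strictly more than $T_n(w) + F_{n+2} - 2$ digits, I must derive a contradiction with $d(x,\K) = 2^{-d}$. Here I would use that $H$ is one-to-one (the remark preceding Proposition~\ref{prop-K-H}) together with Proposition~\ref{prop-K-H}, which says every point of $\K$ lies in $H(\K) \cup \sigma H(\K)$, so $z = H(z')$ or $z = \sigma H(z')$ for some $z' \in \K$; iterating, $z$ (up to a bounded shift) is $H^n$ of some point of $\K$. Then applying the $H^{-1}$ operation described in the proof of Proposition~\ref{prop-K-H} — reading off the block structure of $H^n(x)$ into blocks $0$ and $01$ — pulls the long agreement between $H^n(x)$ and $z$ back to an agreement between $x$ and a point of $\K$ of length $> d$, contradicting maximality of $w$. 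The bookkeeping is that the ``$+F_{n+2}-2$'' slack is exactly the length of the common prefix of $H^n(01)$ and $H^n(10)$, so it is consumed entirely by the ambiguity inside the first post-$w$ block and cannot be leveraged into a longer pullback. I would set this up as induction on $n$: the case $n=1$ is checked directly from the definition of $H$ (distinguishing $x \in [0]$ and $x \in [1]$ and tracking how the single disagreeing digit propagates), and the inductive step composes one application of the $n=1$ case with the inductive hypothesis, using Lemma~\ref{lem:commute_shift_H} to keep track of how $\sigma$ and $H$ interact.

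The main obstacle I anticipate is the reverse inequality — ensuring that a long coincidence $H^n(x)$ with $\K$ genuinely forces a long coincidence $x$ with $\K$, rather than being an artifact of the block $01$ vs $10$ overlap. Concretely, the subtlety is that $H^n(y)$ and $H^n(x)$ stop agreeing in the \emph{middle} of a block (after the common $H^n(01)/H^n(10)$ prefix), so one cannot naively apply $H^{-1}$ to the full coincidence string; one has to argue that extending the coincidence by even one more digit would force $x_d = y_d$ after pulling back. This is where Lemma~\ref{lem-accident-bispecial} and the bi-special word analysis of Proposition~\ref{prop-bispecialfibo} are the natural tools: the coincidence word $x_b \dots x_{d-1}$ at an accident is bi-special, and bi-special words in $\CA_\K$ have length $F_m - 3$, so the possible coincidence lengths are rigidly constrained and match exactly the $F_{n+2}-2$ appearing in Lemma~\ref{lem-prop-stopbloc}. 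Once that rigidity is in hand the corollary follows, and in fact one gets the sharper statement $d(H^n(x),\K) = 2^{-(T_n(w)+F_{n+2}-2)}$ which will presumably be what is needed downstream in computing $\CR^n V$.
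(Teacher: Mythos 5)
Your overall strategy coincides with the paper's: prove the statement for $n=1$ and iterate, and for $n=1$ argue by contradiction that a point $z\in\K$ agreeing with $H(x)$ strictly longer than $H(y)$ does could be pulled back through the inverse substitution $H^{-1}$ (the $0/01$ block decomposition from the proof of Proposition~\ref{prop-K-H}) to a point of $\K$ agreeing with $x$ beyond the maximal common prefix $w$. The gap is that you never actually carry out this pullback at the block boundary --- precisely the point you yourself flag as the main obstacle --- and the substitute you propose (the accident/bi-special machinery of Lemma~\ref{lem-accident-bispecial} and Proposition~\ref{prop-bispecialfibo}) is not yet an argument: Lemma~\ref{lem-accident-bispecial} concerns accidents along the $\s$-orbit of a single point, not two competing approximants $z$ and $H^n(y)$ in $\K$ of the same point $H^n(x)$. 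To use special-word rigidity here you would still have to show that a longer agreement of $z$ forces the length-$(T_n(w)+F_{n+2}-2)$ prefix of $H^n(x)$ to be right-special in $\CA_\K$ and then exclude that, which is genuine extra work; the numerical coincidence ``bi-special lengths $F_m-2$ versus the slack $F_{n+2}-2$'' does not by itself yield the reverse inequality. Your reduction for general $n$ via ``$z$ is, up to a bounded shift, $H^n$ of a point of $\K$'' also needs care, since $\s$ and $H$ only commute through relations of the type in Lemma~\ref{lem:commute_shift_H}.

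The paper closes exactly this step by a short case analysis at level $n=1$, where the block structure is unambiguous, and then simply inducts. If $x=w0\ldots$ and $y=w1\ldots$ (so $y=w10\ldots$, as $11$ is forbidden in $\K$), then $H(x)=H(w)01\ldots$ and $H(y)=H(w)001\ldots$; a $z\in\K$ agreeing with $H(x)$ longer than $H(y)$ does must begin with $H(w)01$, and since $01$ is a complete block of the decomposition, $H^{-1}(z)\in\K$ begins with $w0$, contradicting $d(x,\K)=d(x,y)$. If $x=w1\ldots$ and $y=w0\ldots$, then $H(x)=H(w)00\ldots$ and $H(y)=H(w)01\ldots$, so such a $z$ begins with $H(w)00$; the first of these two zeros is then necessarily a singleton block, hence $H^{-1}(z)$ begins with $w1$ and agrees with $x$ longer than $y$ does, again a contradiction. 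This gives $d(H(x),\K)=d(H(x),H(y))$, and the corollary follows by applying the one-step statement repeatedly (with $H(y)\in\K$ in the role of $y$), with no need for bi-special words or for handling general $n$ in one stroke. A minor point: $H(\K)\subset\K$ is not what Proposition~\ref{prop-K-H} asserts (it gives the opposite inclusion $\K\subset H(\K)\cup\s H(\K)$); the invariance you want follows instead from $H(\rho)=\rho$ together with the commutation relations between $H$ and $\s$.
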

\begin{proof}
Write $x=wa$ and $y=w\ol a$ where $a$ is an unknown digit and $\ol a$ its opposite. Note that $H^{n}(x)$ starts with $0$ for any $n\ge 1$.
Assume that there is some $z\in \K$ such that $d(H(x),z)<d(H(x),H(y))$.

\paragraph{{\it Case 1:}} $x=w0\ldots$ and $y=w1\ldots$. Necessarily, $y=w10$. Therefore $H(x)=H(w)01\ldots$ and $H(y)=H(w)001\ldots$. By assumption, $z$ coincide with $H(x)$ longer than $H(y)$, which shows that $z$ starts as 
$z=H(w)01\ldots$
Consequently $H^{-1}(z)=w0\dots$ and this contradicts that $d(x,\K)=d(x,y)$. 

\paragraph{{\it Case 2:}} $x=w1b\ldots$ and $y=w0\ldots$. Then $H(x)=H(w)00\ldots$  (since $H(b)$ starts with $0$ regardless what $b$ is)
and $H(y)=H(w)01\ldots$ Again $z$ coincides with $H(x)$ longer than $H(y)$ and thus $z$ starts as $H(w)00$. The $0$ before last position is necessarily a single zero for the $H^{-1}$-procedure and thus $H^{-1}(z)$ coincide with  $x$ for longer than $y$. This is a contradiction.

Consequently for both cases we have shown $d(H(x),\K)=d(H(x),H(y))$. 
The result follows by induction. 
\end{proof}

We recall that by \eqref{eq:RnV}, $\CR^{n}V$ is given by a Birkhoff sum, of
$F_{n^*}$ where $n^* = n+1$ or $n$ as in \eqref{eq:n*}.
To compute $(\CR^{n}V)(x)$, we need an estimate for $d(\s^{j}(H^{n}(x)),\K)$, for $0 \le j\le F_{n+1}-1$ or $0 \le j\le F_{n}-1$. The key point is that no accident can occur for these $j$. This follows from the next lemma. 

\begin{lemma}\label{lem-no-accident}
The sequence $H^n(x)$ has no accident in the first $F_{n^*}$ entries.
\end{lemma}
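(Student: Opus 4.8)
The idea is to transfer the accident-free statement for the Fibonacci fixed point $\rho$ to an arbitrary $x\in\S$ via the commutation results already established. First I would fix $x=x_0x_1\dots\in\S$ and let $y\in\K$ realize $d(x,\K)=d(x,y)=2^{-d}$, so that $x$ and $y$ have maximal common prefix $w$ with $|w|=d$. By Corollary~\ref{coro-dHnK}, $d(H^n(x),\K)=d(H^n(x),H^n(y))$, and by Lemma~\ref{lem-prop-stopbloc} this common prefix of $H^n(x)$ and $H^n(y)$ has length exactly $T_n(w)+F_{n+2}-2$, where $T_n(w)=|H^n(w)|$. Since $H^n(y)\in\K$, detecting accidents of $H^n(x)$ in its first entries is, by Lemma~\ref{lem-accident-bispecial}, the question of whether any prefix $H^n(x)_b\dots H^n(x)_{\ell-1}$ (with $\ell=T_n(w)+F_{n+2}-2$ the coincidence length and $b$ the location of a first accident) is bi-special in $\CA_\K$ — equivalently whether the coincidence prefix $H^n(x)_0\dots H^n(x)_{\ell-1}$ is right-special. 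So the whole statement reduces to a combinatorial fact about the prefix of $H^n(y)$, i.e. a prefix of a point of $\K$ that agrees with $\rho$ on a long initial segment.

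The second step is to pin down how far into $H^n(x)$ we actually need no accident: only the first $F_{n^*}$ entries, with $n^*=n+1$ if $x\in[0]$ and $n^*=n$ if $x\in[1]$. Here I would use Corollary~\ref{cor-coinc-hn-rho}: for $x\in[0]$, $H^n(x)$ agrees with $\rho$ for at least $F_{n+3}-2\ge F_{n+1}$ digits, and for $x\in[1]$ for at least $F_{n+2}-2\ge F_n$ digits (using the Fibonacci recursion and $F_m\ge 2$). Thus on the relevant window $0\le j\le F_{n^*}-1$ the sequence $H^n(x)$ literally coincides with $\rho$. So an accident at some $b<F_{n^*}$ would, by Lemma~\ref{lem-accident-bispecial} applied to $x'=H^n(x)$ and $y'=\rho$, force the prefix $\rho_0\dots\rho_{\ell-1}$ to fail to be right-special, where $\ell$ is the first difference between $H^n(x)$ and $\rho$; and it would produce a bi-special word $\rho_b\dots\rho_{\ell-1}$ sitting strictly inside the first $F_{n^*}$ symbols of $\rho$.

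The third step is to derive a contradiction from Proposition~\ref{prop-bispecialfibo}: the bi-special words of $\CA_\K$ are exactly the prefixes $\rho_0\dots\rho_{F_m-3}$ for $m\ge 3$. If $\rho_b\dots\rho_{\ell-1}$ is bi-special with $b\ge 1$, then in particular it is left-special, and the unique left-special word of its length $\ell-b$ is a \emph{prefix} of $\rho$, namely $\rho_0\dots\rho_{\ell-b-1}$; so $\rho_b\dots\rho_{\ell-1}=\rho_0\dots\rho_{\ell-b-1}$, which says $\rho$ has period $b$ on its first $\ell$ symbols. Since $\rho$ is aperiodic (it is Sturmian), this is only possible if $\ell$ is small relative to $b$ — and the quantitative way to rule it out is that the first difference between $H^n(x)$ and $\rho$ occurs at position $\ell=T_n(w)+F_{n+2}-2\ge F_{n+2}-2$, which already exceeds $F_{n^*}-1$ in both cases ($n^*=n$ or $n+1$), so the first accident location $b<F_{n^*}\le\ell$ would have to correspond to a bi-special word of length $\ell-b>F_{n^*}-F_{n^*}=\dots$; more cleanly, one checks directly that the first $F_{n^*}$ symbols of $\rho$ contain no occurrence of a bi-special word other than as a prefix starting at position $0$, using that the next bi-special length after $F_m-3$ jumps to $F_{m+1}-3$ and the corresponding word is again a prefix. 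I expect the main obstacle to be exactly this last bookkeeping: carefully matching the coincidence length $T_n(w)+F_{n+2}-2$, the window length $F_{n^*}$, and the admissible bi-special lengths $F_m-3$ so that an accident at $b<F_{n^*}$ is numerically impossible — i.e. showing the first accident, if any, happens only at or beyond position $F_{n^*}$, which is precisely where the boundary of the Birkhoff window in \eqref{eq:RnV} sits.
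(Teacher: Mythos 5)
Your setup is the same as the paper's and is fine as far as it goes: coincidence of $H^n(x)$ with $\rho$ (Corollary~\ref{cor-coinc-hn-rho}), accident at $b$ $\Rightarrow$ a bi-special word starting at $b$ (Lemma~\ref{lem-accident-bispecial}), bi-special $\Rightarrow$ equal to the prefix $\rho_0\dots\rho_{F_m-3}$ (Proposition~\ref{prop-bispecialfibo}), hence $\rho$ repeats with period $b$ along a long initial segment. But the proof stops exactly where the real work begins: you never derive a contradiction from this local periodicity. Appealing to ``$\rho$ is aperiodic (it is Sturmian)'' cannot do it, because aperiodicity concerns the infinite word, whereas here you only get periodicity on a finite prefix, and finite prefixes of the Fibonacci word genuinely are periodic with small periods: $010010$ has period $3$, $01001010010$ has period $5$, and in general the prefix of length $F_{n+2}-2$ has least period exactly $F_n$. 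So the statement you need --- no period $b<F_{n^*}$ on a stretch of length roughly $F_{n+2}$ --- is sharp, and it requires a genuine argument, not ``bookkeeping''. Your proposed shortcut, that the first $F_{n^*}$ symbols of $\rho$ contain no occurrence of a bi-special word except as a prefix at position $0$, is false as stated: the bi-special words $0$ and $010$ occur internally everywhere. What must be excluded is an internal occurrence at $0<b<F_{n^*}$ of a \emph{long} bi-special word (one first shows $m\ge n+2$, using the ``not right-special'' clause of Lemma~\ref{lem-accident-bispecial}, or your exact coincidence length $T_n(w)+F_{n+2}-2$ with $T_n(w)\ge F_{n^*}$); your text trails off (``$=\dots$'') precisely at this point.

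The paper closes this gap with a renormalization-of-the-period argument: an accident at $b<F_{n^*}$ forces $\rho_0\dots\rho_{F_{n+2}-1}=BBB'$, where $B$ is the prefix of $\rho$ of length $b$ and $B'$ a prefix of length $\ge |B|/\ga$; since $B$ starts with $0$ and splits into an integer number of blocks $0$ and $01$, one applies $H^{-1}$ to obtain a configuration $CCC'$ of the same type, and iterating shows $\rho$ would have to begin with $00$ or $0101$, contradicting $\rho=0100\dots$. Some quantitative step of this kind (the paper's inverse-substitution iteration, or alternatively a Fine--Wilf-type lower bound on least periods of Fibonacci prefixes) is indispensable; without it your argument is a correct reduction but not a proof.
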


\begin{proof}
We give the proof for $x \in [1]$, so $n^* = n$. The proof for $x \in [0]$ is analogous.
By Corollary~\ref{cor-coinc-hn-rho}, $H^n(x)$ coincides for at least $F_{n+2}-2$
digits with $\rho$.
If an accident happens in the first $F_n$ digits, say at entry $0 \le j < F_n$,
then by Lemma~\ref{lem-accident-bispecial}, a bi-special word starts
at $j$, which by Proposition~\ref{prop-bispecialfibo} is a suffix
of $\rho$ of length $F_m-2$ for some $m$.
Since we have an accident, $j+F_m-2 \ge F_{n+2}-1$, so $m > n+1$.


Hence $\rho_{0}\ldots\rho_{F_{n+2}-1}$ can be written as $BBB'$ where $B$ is the suffix of $\rho$ of length $j$ and $B'$ is a suffix of $\rho$ of length $\ge |B|/\ga$.
Clearly $B$ starts with $0$. We can split it uniquely into blocks $0$ and $01$, and $B$ fits an integer number of such blocks, because if
the final block would overlap with the second appearance of $B$, then $B$ 
would start with $1$, which it does not.

Therefore we can perform an inverse substitution $H^{-1}$, for  each block $B$ and also for $B'$ because we cal globally do $H^{-1}$ for $\rho_{0}\ldots\rho_{F_{n+2}-1}$. We  find $H^{-1}(BBB') = CCC'$ which  has the same characteristics.
Repeating this inverse iteration, we find that $\rho$ starts with $0101$, or with $00$, 
a contradiction.
\end{proof}

Let $N(x,n)$ be the integer such that $2^{-N(x,n)} = d(H^{n}(x),\K)$.
By the previous lemma $d(\s^{j}(H^{n}(x))\K)=2^{-(N(x,n)-j)}$
for every $j<F_{n^{*}}$. For the largest value $j = F_{N^*}$,
we have  $d(\s^{j}(H^{n}(x))\K) = 2^{-(T_{n}+F_{n+2}-2-F_{n^{*}})}$.
Therefore, if $g$ is the $\al$-density function for $V$, then we obtain
\begin{equation}\label{equ-crkV}
(\CR^{n}V)(x)=\sum_{j=0}^{F_{n^*}-1}\frac{g \circ \s^j \circ H^n(x)}{(N(x,n)-j)^{\al}} +o\left(\sum_{j=0}^{F_{n^*}-1}\frac{g \circ \s^j \circ H^n(x)}{(N(x,n)-j)^{\al}}\right).
\end{equation}

\subsection{Proof of Theorem~\ref{theo-fixedpoint}}
\subsubsection{$\wt V$ is a fixed point}
A simple computation shows that $\CR$ fixes $\wt V$ from
\eqref{eq:tildeV}. Assume $x\notin\K$ is such that $\wt\kappa_{0}(x)=n$ and $\wt\kappa_{1}(x)=m$ (see the definition of $\wt\kappa_{a}$ above  the statement of Theorem~\ref{theo-fixedpoint}).
Then, by Lemmas~\ref{lem-lengthHn} and \ref{lem-prop-stopbloc}, 
and the fact that $H(x)$ starts with $0$, we get
\begin{eqnarray*}
\wt\kappa_{0}(H(x)) = n+m+1\ & \qquad &\wt\kappa_{0}(\s \circ H(x))  =  n+m,\\
\wt\kappa_{1}(H(x))  =  n \qquad \qquad & \qquad & \wt\kappa_{1}(\s \circ H(x)) =  n.
\end{eqnarray*}
 \begin{itemize}
\item If $x$ starts with $0$, then $H(x)$ starts with $01$ and 
\begin{eqnarray*}
(\CR\wt V)(x)&=& \wt V(H(x))+\wt V\circ \s(H(x))\\
&=& \log\left(\frac{(n+m+1)+\frac1\ga n+\ga}{(n+m+1)+\frac1\ga n+\ga-1}\right)+\log\left(\frac{\ga(n+m)+ n+\ga^{2}}{\ga(n+m)+ n+\ga^{2}-1}\right)\\
&=& \log\left(\frac{(n+m+1)+\frac1\ga n+\ga}{n+m+\frac1\ga n+\ga}\right)+\log\left(\frac{n+m+\frac1\ga n+\ga}{n+m+\frac1\ga n+\ga-\frac1\ga}\right)\\
&=& \log\left(\frac{n+m+1+\frac1\ga n+\ga}{n+m+\frac1\ga n+\ga-\frac1\ga}\right)\\
&=& \log\left(\frac{n(1+\frac1\ga)+m+\ga+1}{n(1+\frac1\ga)+m+\ga-\frac1\ga}\right)\qquad\qquad  \text{\small since } \ga^{2}=\ga+1\\
&=& \log\left(\frac{\ga n+m+\ga^{2}}{\ga n+m+\ga(\ga-1)}\right)=\log\left(\frac{n+\frac1\ga m+\ga}{n+\frac1\ga m+\ga-1}\right)=\wt V(x).
\end{eqnarray*}
 
\item If $x$ starts with $1$, then $H(x)$ starts with $0$ and 
\begin{eqnarray*}
(\CR\wt V)(x)&=& \wt V(H(x))=\log\left(\frac{(n+m+1)+ \frac1\ga n+\ga}{(n+m+1)+\frac1\ga n+\ga-1}\right)\\
&=&
\log\left(\frac{\ga(n+m+1)+ n+\ga^{2}}{\ga(n+m+1)+ n+\ga^{2}-\ga}\right)\\
&=&
\log\left(\frac{n(\ga+1)+\ga m+\ga+\ga^{2}}{n(\ga+1)+\ga m+\ga^{2}}\right)\\
&=& \log\left(\frac{\ga^{2}n+\ga m+\ga^{3}}{\ga^{2}n+\ga m+\ga^{2}}\right)\\
&=&
\log\left(\frac{\ga n+m+\ga^{2}}{\ga n+m+\ga}\right)
=\log\left(\frac{\ga n+m+\ga^{2}}{\ga n+m+\ga^{2}-1}\right) =\wt V(x).
\end{eqnarray*}
\end{itemize}

\subsubsection{A Toeplitz summation}
Next, we consider $V\in\CX_{1}$. We shall see in the proof that the convergence for these $V$ automatically implies that for $V\in\CX_{\al}$ with $\al\ne1$, the same computations work but with either too light or too heavy denominators. 

In the same spirit, we shall see that $\disp \sum_{j=0}^{F_{n^*}-1}\frac{g \circ \s^j \circ H^n(x)}{N(x,n)-j}$ actually converges, which immediately yields that  $\disp o\left(\sum_{j=0}^{F_{n^*}-1}\frac{g \circ \s^j \circ H^n(x)}{N(x,n)-j}\right)$  converges to $0$. We can thus forget any term of that kind. 

Recall that by Lemmas~\ref{lem-lengthHn} and \ref{lem-prop-stopbloc}
and Corollary~\ref{coro-dHnK} we have
$$
N(x,n) := \log_2 d(H^n(x), \K) = T_n+F_{n+2}-2 \quad \text{ for } \
T_{n}:=F_{n+1}\wt\kappa_{0}(x)+F_{n}\wt\kappa_{1}(x).
$$
We thus have to compute the limit of 
$$
\sum_{j=0}^{F_{n^*}-1}\frac{g \circ \s^j \circ H^n(x)}{F_{n+1}\wt\kappa_{0}(x)+F_{n}\wt\kappa_{1}(x)+F_{n+2}-(j+2)}
$$
as $n^* \to +\8$, where $n^{*} = n+1$ if $x \in [0]$ and $n^* = n$ if $x\in[1]$. Moreover $g$ is a non-negative continuous function, hence uniformly continuous. Furthermore, for $y \in \K$ closest to $x$,
the point $\s^{F_{n^{*}}} \circ H^{n}(x)$ coincides with $\s^{F_{n^{*}}} \circ H^{n}(y)$ for at least $F_{n}-2$ digits.
There exists a sequence  $\eps_{n}\downarrow 0$ such that 
$$\left|g\circ \s^{j}(H^{n}(x))-g\circ \s^{j}(H^{n}(y))\right|\le \eps_{n},$$
for every $j\le F_{n^{*}}-1$.

Finally, Binet's formula $F_{n+1}-\ga F_n = \sqrt{5} \ga^{-(n-1)}$ shows that 
\begin{eqnarray*}
F_{n+1}\wt\kappa_{0}(x)+F_{n}\wt\kappa_{1}(x)\!&+&\!F_{n+2}-(j+2)\\
&=& F_{n+1}(\wt\kappa_{0}(x)+\frac1\ga\wt\kappa_{1}(x)+\ga-\frac{j}{F_{n+1}})(1+\eps'_{n}) \\
&=& F_{n}(\ga\wt\kappa_{0}(x)+\wt\kappa_{1}(x)+\ga^{2}-\frac{j}{F_{n}})(1+\eps''_{n}),
\end{eqnarray*}
where $\eps'_{n}$ and $\eps''_{n}$ tend to $0$ as $n \to +\8$. 

Combining $\eps$, $\eps'$ and $\eps''$ in a single $o(1)$, 
we can rewrite the above equalities to 
\begin{equation}\label{equ1-toeplitz}
 \sum_{j=0}^{F_{n^*}-1}\frac{g \circ \s^j \circ H^n(x)}{F_{n+1}\wt\kappa_{0}(x)+F_{n}\wt\kappa_{1}(x)+F_{n+2}-(j+2)}=
\frac{1+o(1)}{F_{n^{*}}} \sum_{j=0}^{F_{n^{*}}}\frac{g\circ\s^{j}\circ H^{n}(y)}{X_{n}-\frac{j}{F_{n^{*}}}},
\end{equation}
where $X_{n}=\wt\kappa_{0}(x)+\frac1\ga\wt\kappa_{1}(x)+\ga$ if $x \in [0]$ 
and $X_{n}=\ga\wt\kappa_{0}(x)+\wt\kappa_{1}(x)+\ga^{2}$ if $x\in[1]$. 

\subsubsection{Convergence of the weighted sum in \eqref{equ1-toeplitz}}
The reader can verify that we are here considering a Toeplitz summation method, with a regular matrix (see \cite[ Definition 7.5]{kuipers-niederreiter}  and \cite{weber-crass}), up to a renormalization factor, which is the limit of 
$$
\frac1{F_{n^{*}}}\sum_{j=0}^{F_{n^{*}}}\frac1{X_{n}-\frac{j}{F_{n^{*}}}}.
$$
This expression is a Riemann sum, and converges to $\wt V(x)$
as $n^* \to \8$. 

From \cite{weber-crass} we have that the M\"uller (see \cite{muller}) criterion applies, and we get that for $\mu_{\K}$-almost every $z\in\K$, 
\begin{equation}
\label{equ-cv-toepli}
\lim_{n\to+\8}\frac1{F_{n^{*}}}\sum_{j=0}^{F_{n^{*}}}\frac{g\circ \s^{j}(z)}{X_{n}-\frac{j}{F_{n^{*}}}}=\wt V(x)\int g\,d\mu_{\K}.
\end{equation}

Nevertheless, our expression in \eqref{equ1-toeplitz} is different, because the point $z$ we are considering is $H^{n}(y)$ and thus depends on $n$.
A priori, this may generates fluctuations in the convergence, but
we prove here that this is not the case.

The main argument is that $(\K,\s)$ is uniquely ergodic. We claim that this implies that the convergence in \eqref{equ-cv-toepli} is uniform in $z$. 

Indeed, if it is not uniform, we can find $\eps > 0$ and 
a sequence of $z_{n}$ such that for every $n$, 
$\disp |\frac1{F_{n^{*}}}\sum_{j=0}^{F_{n^{*}}}\frac{g\circ \s^{j}(z_{n})}{X_{n}-\frac{j}{F_{n^{*}}}} - \wt V(x)\int g\,d\mu_{\K}| > \eps$ 
for every $n$. 
Then any accumulation point $\mu_{\8}$ of the family of measures 
$$
\mu_{n}:=\frac1{F_{n^{*}}}\sum_{j=0}^{F_{n^{*}}}\frac{1}{X_{n}-\frac{j}{F_{n^{*}}}}\delta_{\s^{j}(z_{n})}
$$
is $\s$-invariant (because $F_{n^{*}}\to+\8$), supported on $\K$, and 
$\int g\,d\mu_{\8}\neq \int g\,d\mu_{\K}$. This would contradict the unique ergodicity for $(\K, \s)$. 

Therefore, the convergence in \eqref{equ-cv-toepli} is uniform in $z$ and this shows that 
$$
\frac1{F_{n^{*}}}\sum_{j=0}^{F_{n^{*}}}\frac{g\circ \s^{j}(H^{n}(y))}{X_{n}-\frac{j}{F_{n^{*}}}} \to \wt V(x) \cdot \int g\,d\mu_{\K}.
$$ 
This finishes the proof of Theorem~\ref{theo-fixedpoint}.

\section{Proof of Theorem~\ref{theo-pt}}\label{sec-proffthpt}

\subsection{The case \boldmath $-\log \frac{n+1}{n}$ \unboldmath}
\label{subsec-logcase}
We first consider the potential $\varphi(x) = -\log \frac{n+1}{n}$ if $d(x, \K) = 2^{-n}$, leaving the general potential in $\CX_1$ for later.

\subsubsection{Strategy, local equilibria}
Fix some cylinder $J$ such that the associated word, say $\omega_{J}$, does not appear in $\rho$ (as \eg 11). We follow the induction method presented in \cite{leplaideur-synth}. Let $\tau$ be the first return time into $J$ (possibly $\tau(x)=+\8$), and consider the family of transfer operators 
\begin{eqnarray*}
\CL_{Z,\be}:\psi&\mapsto& \CL_{Z,\be}(\psi)\\
x&\mapsto& \CL_{Z,\be}(\psi)(x):=\sum_{n=1}^{+\8}\sum_{\stackrel{y\in J\ \tau(y)=n}{ \s^{n}(y)=x}}e^{\be \cdot (S_{n}\varphi)(y)-nZ}\psi(y),
\end{eqnarray*}
which acts on the set of continuous functions $\psi:J\to\R$.  
Following \cite[Proposition 1]{leplaideur-synth}, for each $\be$ there exists $Z_{c}(\be)$ such that $\CL_{Z,\be}$ is well defined for every $Z>Z_{c}(\be)$. 
By \cite[Theorem 1]{leplaideur-synth}, $Z_{c}(\be)\ge 0$ because the pressure of the dotted system 
(which in the terminology of \cite{leplaideur-synth} is the system restricted to the trajectories that avoid $J$)
is larger (or equal) than the pressure of $\K$ which is zero. 

We shall prove
\begin{proposition}
\label{prop-spectral&zc}
There exists $\be_{0}$ such that $\CL_{0,\be}(\BBone_{J})(x)<1$ for every $\be>\be_{0}$ and $x\in J$.
\end{proposition}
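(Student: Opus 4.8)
The plan is to estimate $\CL_{0,\be}(\BBone_J)$ directly, as in the inducing scheme of \cite{leplaideur-synth}. Unravelling the definitions, a point $y$ with $y\in J$, $\tau(y)=n$ and $\s^n(y)=x$ is of the form $y=wx$, where $w=y_0\ldots y_{n-1}$ is a word of length $n$ beginning with $\omega_J$ (when $n\ge|\omega_J|$) and satisfying $\s^k(wx)\notin J$ for $0<k<n$. Hence
$$
\CL_{0,\be}(\BBone_J)(x)=\sum_{n\ge 1}\ \sum_{w\in\CW_n(x)}e^{\be\,S_n\varphi(wx)},
$$
where $\CW_n(x)$ is the finite set of such first-return words. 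The goal is to bound the right-hand side, uniformly in $x\in J$, by a quantity that tends to $0$ as $\be\to+\8$. (The finitely many terms with $n<|\omega_J|$ are each at most $e^{-\be c_J}$ with $c_J:=-\sup_J\varphi>0$, hence negligible; and finiteness of the whole sum shows $Z_c(\be)\le 0$, so with $Z_c(\be)\ge 0$ the critical value sits at $0$.)

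The estimate rests on an energy--entropy competition along each orbit segment $y,\s y,\ldots,\s^{n-1}y$. Writing $m_k:=-\log_2 d(\s^k y,\K)$, the model potential yields the telescoping identity $e^{S_n\varphi(y)}=\prod_{k=0}^{n-1}\frac{m_k}{m_k+1}$, and by Definition~\ref{def-accident} the sequence $(m_k)$ decreases by exactly $1$ at each step that is not an accident, so a maximal accident-free descent from depth $m$ down to depth $m'$ contributes the factor $\frac{m'+1}{m+1}$. Fix a threshold $N_1\ge|\omega_J|$, call an index $k$ \emph{tame} if $m_k\le N_1$, and otherwise let it belong to a \emph{close run}, i.e. a maximal stretch of indices with $m_k>N_1$; note $k=0$ is always tame, since $y\in J$ forces $m_0<|\omega_J|$. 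On tame indices $\varphi\le\log\frac{N_1}{N_1+1}<0$ and there are at most two choices for the digit, so the tame part of the sum is controlled by a geometric series with ratio $\le 2e^{-\be/(N_1+1)}$, which is $<1$ for large $\be$. On a close run of length $t$ and maximal depth $D$, the accident-free descent back to level $N_1$ alone takes at least $D-N_1$ steps, while each step in the run contributes a factor $\le\frac{D}{D+1}$; hence the run costs at least $\max\!\big(\log\tfrac{D+1}{N_1+1},\ \tfrac{t}{D+1}\big)\gtrsim\tfrac12\log t$ in energy. For the number of realizations, one uses that along a close run every window $w_k\ldots w_{k+N_1-1}$ lies in $\CA_\K$, so the run word is a path in the Rauzy graph of $\K$ of order $N_1$; by the description of that graph recalled in Section~\ref{sec-H-K-R} — two ``buckles'' whose lengths are consecutive Fibonacci numbers, the mechanism behind Proposition~\ref{prop-bispecialfibo} — the number of length-$t$ runs of maximal depth $D$ is at most $P(t)\,2^{ct/D}$ for a polynomial $P$ and an absolute constant $c$: polynomially many when the run dives deep ($D\asymp t$), and subexponentially in a way outweighed by the $e^{-\be t/(D+1)}$ factor otherwise.

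Assembling these estimates over the choice of the tame index set $T$ (of size $\ge 1$), the placement of the close runs within it, and their lengths and depths, one obtains a bound of the shape
$$
\CL_{0,\be}(\BBone_J)(x)\ \le\ C\sum_{s\ge1}\big(4\,e^{-\be/(N_1+1)}\big)^{s}\big(1+A(\be)\big)^{s+1},
\qquad
A(\be):=\sum_{t\ge1}\sum_{D}P(t)\,2^{ct/D}\,e^{-\be\max(\frac12\log t,\ \frac{t}{D+1})},
$$
where $A(\be)<\8$ for $\be$ large and $A(\be)\to0$ as $\be\to+\8$. The right-hand side is a convergent geometric-type series whose sum tends to $0$ as $\be\to+\8$; since no constant above depends on $x$, there is $\be_0$ with $\CL_{0,\be}(\BBone_J)(x)<1$ for all $\be>\be_0$ and all $x\in J$.

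The hard part is the bookkeeping of close runs: establishing the bound $P(t)2^{ct/D}$ on their number, and checking that $A(\be)$ is finite and vanishes as $\be\to\8$. This is the combinatorially ``quite involved'' core announced in the outline, and it is precisely where the rigidity of the Fibonacci substitution is used, via the exact description of accidents and (bi-)special words from Section~\ref{sec-H-K-R}. The remaining points are routine: uniformity in $x$ holds because all constants are $x$-independent and because only the last $O(1)$ terms of $S_n\varphi$ feel $x$, and these may be dropped as $\varphi\le0$; and the passage from the model potential $-\log\frac{n+1}{n}$ to a general $V\in\CX_1$ perturbs $S_n\varphi$ only by an additive bounded term plus the $o(n^{-1})$-contribution of the $h$-part, which is absorbed into the error terms.
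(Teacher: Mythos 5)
Your set-up is essentially Step 1 of the paper's argument: the return-word expansion of $\CL_{0,\be}(\BBone_J)$, the telescoping identity $e^{S_n\varphi}=\prod_k m_k/(m_k+1)$, the threshold $N_1$ splitting the orbit into tame (free) times and close runs (excursions), and the geometric control $2e^{-\be/(N_1+1)}$ of the tame part. The gap is in the close runs, which is exactly where the paper spends Steps 2--4. You reduce everything to the claim that the number of close-run words of length $t$ with maximal depth $D$ is at most $P(t)\,2^{ct/D}$, together with the finiteness and vanishing of $A(\be)$, and you explicitly leave both unproved; but that is the entire content of the proposition. Moreover the counting claim looks false as stated in the regime $N_1\ll D\ll t$: the maximal depth can be attained by a single deep dive, while the rest of the run stays at depths just above $N_1$ and carries the entropy of the order-$N_1$ Rauzy-graph approximation, i.e.\ on the order of $2^{c t/N_1}$ words (each accident at depth comparable to $N_1$ allows a bounded number of choices and forces a gap of length only about $N_1/\ga$), a quantity independent of $D$. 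So the exponent $ct/D$ cannot be right, and once the count is corrected your weakened energy bound $\max\bigl(\log\frac{D+1}{N_1+1},\frac{t}{D+1}\bigr)\gtrsim\frac12\log t$ no longer beats the entropy: $t^{-\be/2}$ against $2^{ct/N_1}$ fails for every $\be$. Any repair must track the energy accident by accident (using the full sum $\sum_k 1/(m_k+1)$ and the spacing of accidents), which is precisely the deferred work.

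That finer bookkeeping is what the paper actually supplies. An excursion is decomposed by its accidents; Lemma~\ref{lem-accident-bispecial} and Proposition~\ref{prop-bispecialfibo} force the inter-accident overlaps to be bi-special words of length $F_m-2$; Lemma~\ref{lem:bij}, via the Rauzy graph, gives the quantitative spacing $b_{i+1}(j)\ge\max(F_{n_i}-F_{n_{i+1}},F_{n_i-1})+jF_{n_{i+1}-2}$, whence each accident contributes a factor $E_i$ of order $\frac{\ga^2}{\be-1}\bigl(1+\max(\ga^{n_i-n_{i+1}}-1,\ga^{n_i-n_{i+1}-1})\bigr)^{1-\be}$, small per accident uniformly in the depths; and the sum over all accident configurations $(M;n_1,\dots,n_M)$ is carried out by the inductive matrix estimate (Lemmas~\ref{lem-gamma-n}, \ref{lem-matrix} and Proposition~\ref{prop-calcul-matrix-majo}), the delicate configurations being those with slowly increasing $n_i$, whose individual factors are close to $1$ and are only beaten by the tail factor $\ga^{(P-n_M)(\be-1)}$. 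None of this is captured by your coarse energy-versus-count comparison, so the proposal as it stands is an outline of the easy half with an unproved, and apparently incorrect, combinatorial claim standing in for the hard half.
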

We claim that if Proposition~\ref{prop-spectral&zc} holds, then
\cite[Theorem 4]{leplaideur-synth} proves that $\CP(\be)=0$ for every $\be>\be_{0}$, and $\mu_{\K}$ is the unique equilibrium state for $\be \varphi$. 

To summarize \cite{leplaideur-synth} (and adapt it to our context), the 
pressure function satisfies (see Figure~\ref{fig-graphes-press}\footnote{We will see that $\CL_{0,\be}(\BBone_{J})(x)$ is a constant function on $J$.}),
$$
Z_{c}(\be) \le \CP(\be) \le \max(\log(\CL_{0,\be}(\BBone_{J})),0).
$$

As long as $\CP(\be)>0$, there is a unique equilibrium state and it has full support. In particular this shows that the construction does not depend on the choice of $J$. If Proposition~\ref{prop-spectral&zc} holds, then 
$$Z_{c}(\be)=\CP(\be)=\max(\log(\CL_{0,\be}(\BBone_{J})),0)=0,
\text{ for } \be>\be_{c}$$
and $\mu_{\K}$ is the unique equilibrium state because $\CL_{0,\be}(\BBone_{J})<1$. 

\begin{figure}[htbp]
\begin{center}
\includegraphics[scale=.5]{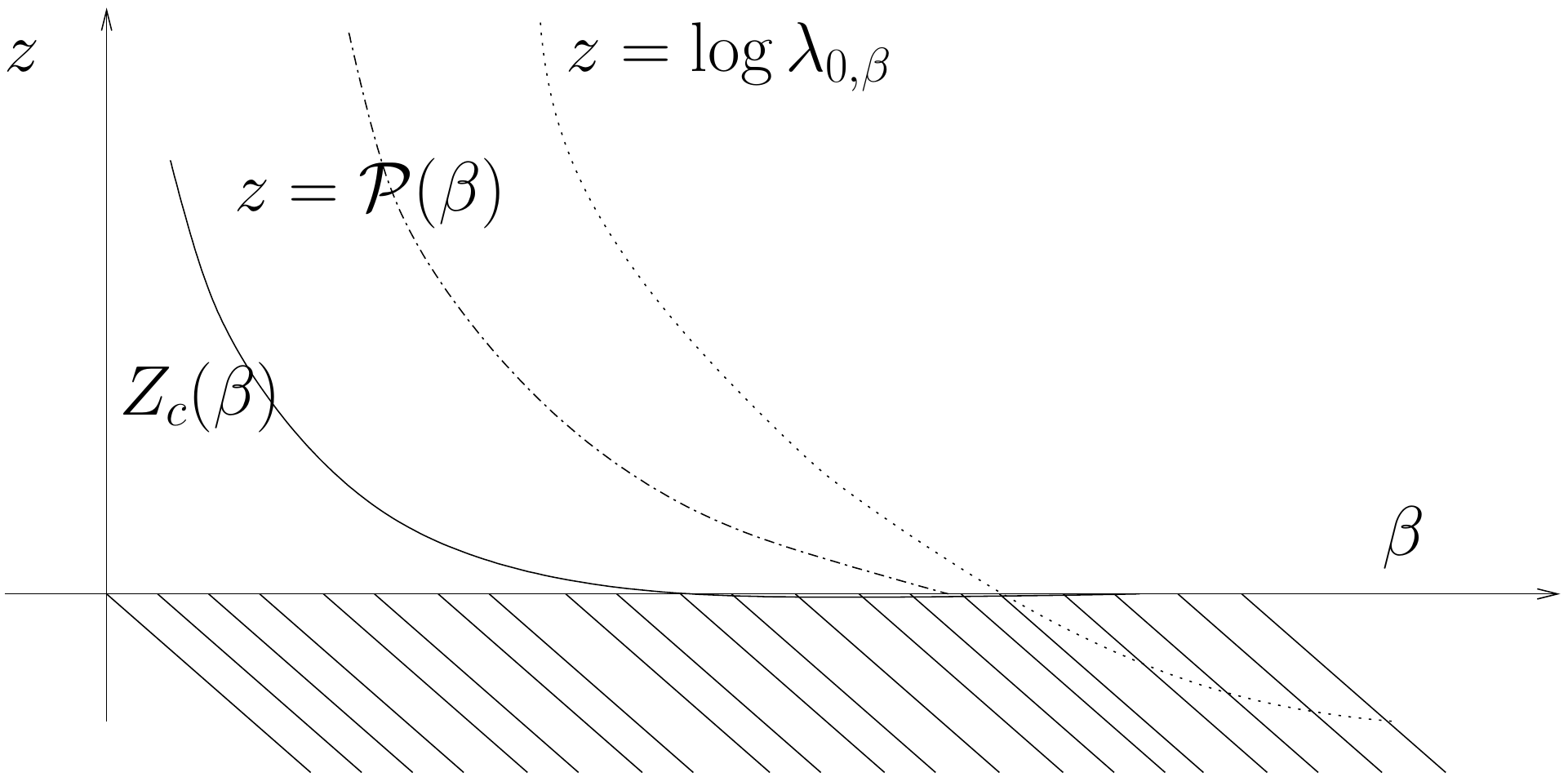}
\caption{The Pressure between $Z_{c}(\be)$ and $\log\l_{0,\be}:=\CL_{0,\be}(\BBone_{J})$}
\label{fig-graphes-press}
\end{center}
\end{figure}

\subsubsection{Proof of Proposition~\ref{prop-spectral&zc}-Step 1}\label{subsubsec-upper=series}
We reduce the problem to the computation of a series depending on $\be$. 
Note that $\varphi(x)$ only depends on the distance from $x$ to $\K$. This shows that if $x, x' \in J$ and $y, y' \in J$ are such that 
$$y=\omega x\ , \quad  y'=\omega x',$$
with $\omega\in \{0,1\}^{n}$, $\tau(y)=\tau(y')=n$, then 
$$(S_{n}\varphi)(y)=(S_{n}\varphi)(y').$$
In other words, $\CL_{Z,\be}(\BBone_{J})$ is a constant function, and then equal to the spectral radius $\l_{Z,\be}$ of $\CL_{Z,\be}$.  

Consequently, to compute $\l_{Z,\be}$, it suffices to compute the sum of all 
$\disp e^{\be \cdot (S_{n}\varphi)(\omega)-nZ}$, where $\omega$ is a word of length $n+|\omega_{J}|$, starting and finishing as $\omega_{J}$. Such a word $\omega$ can also be seen as a path of length $n$ starting from $J$ and returning (for the first time) to $J$ at time $n$. 

We split such a path in several sub-paths. We fix an integer $N$ and say that the path is {\em free} at time  $k$ if $\omega_{k}\ldots \omega_{n-1}\omega_{J}$ is at distance  larger than $2^{-N}$ to $\K$. Otherwise, we say that we have an {\em excursion}. The path is thus split into intervals of free moments and excursions. 
We assume that $N$ is chosen so large that $0$ is a free moment. This also shows that for every $k\le n$, $d(\s^{k}(\omega\omega_{J}),\K)$ is determined by $\omega_{k}\ldots \omega_{n-1}$. 

If $k$ is a free time, $\disp \varphi(\s^{k}(\omega\omega_{J}))\le A_{N}:=-\log\left(1+\frac 1N\right)$. Denote by $k_{0}$ the maximal integer such that 
$k$ is a free time for every $k\le k_{0}$. Then $S_{k_{0}+1}\varphi\le (k_{0}+1)A_{N}$ and there are fewer than $2^{k_{0}+1}$ such prefixes of length $k_0+1$. 

Now, assume that every $j$  for $k_{0}+1\le j\le k_{0}+k_{1}$ is an excursion time, and assume that $k_{1}$ is the maximal integer with this property. To the contribution $(S_{k_{0}+1}\varphi)(\omega\omega_{J})$ we must add the contribution $(S_{k_{1}}\varphi)(\s^{k_{0}+1}(\omega\omega_{J}))$ of the excursion. 
Then we have a new interval of free times, and so on. 
This means that we can compute $\CL_{0,\be}(\BBone_{J})$ by gluing together paths with the same decompositions  of free times and excursion times. If we denote by $C_{E}$ the total contribution of all paths with exactly one excursion (and only starting at the beginning of the excursion), then we have 
\begin{equation}
\label{equ1-upperboundCL0}
\l_{0,\be}=\CL_{0,\be}(\BBone_{J}) \le \sum_{k=1}^{+\8}\left(\sum_{k_{0}=0}^{+\8}e^{(k_{0}+1)(\be A_{N}+\log2)}\right)^{k+1}C_{E}^k.
\end{equation}
The sum in $k$ accounts for $k+1$ intervals of free moment with $k$ intervals of excursions times between them. The sum in $k_{0}$ accounts for the possible length $k_{0}+1$ for an interval of free times. 
These events are maybe not independent but the sum in \eqref{equ1-upperboundCL0} includes all paths, possible or not, and therefore yields an upper bound. 

The integer $N$ is fixed, and we can take $\be$ so large that $\be A_{N}<-\log2$. This shows that the sum in $k_{0}$ in \eqref{equ1-upperboundCL0}  converges and is as close to $0$ as we 
want if $\be$ is taken sufficiently large. 

To prove Proposition~\ref{prop-spectral&zc}, it is thus sufficient to prove that $C_{E}$ can be made as small as we want if $\be$ increases. 

\subsubsection{Proof of Proposition~\ref{prop-spectral&zc}-Step 2}\label{subsubsec-splitCE}
We split excursions according to their number of accidents, see Definition~\ref{def-accident}. 
Let $x$ be a point at a beginning of an excursion. 
 
Let $B_{0}:=0=b_{0}$, $B_1 := b_1>b_{0},\ B_2 := b_1+b_2>b_{1},\ 
B_3 := b_1+b_2+b_3, \dots ,
B_M := b_1+b_2+ \dots + b_M$
be the times of accidents in the excursion.
There is $y_0 \in \K$ such that $x$ shadows $y_0$ at the beginning
of the excursion, say for $d_0$ iterates.
Let $y_i \in \K$, $i = 1, \dots, M$, be the points that $x$ starts to shadow 
at the $i$-th accident, for $d_i$ iterates.

\begin{figure}[htbp]
\begin{center}
\includegraphics[scale=0.5]{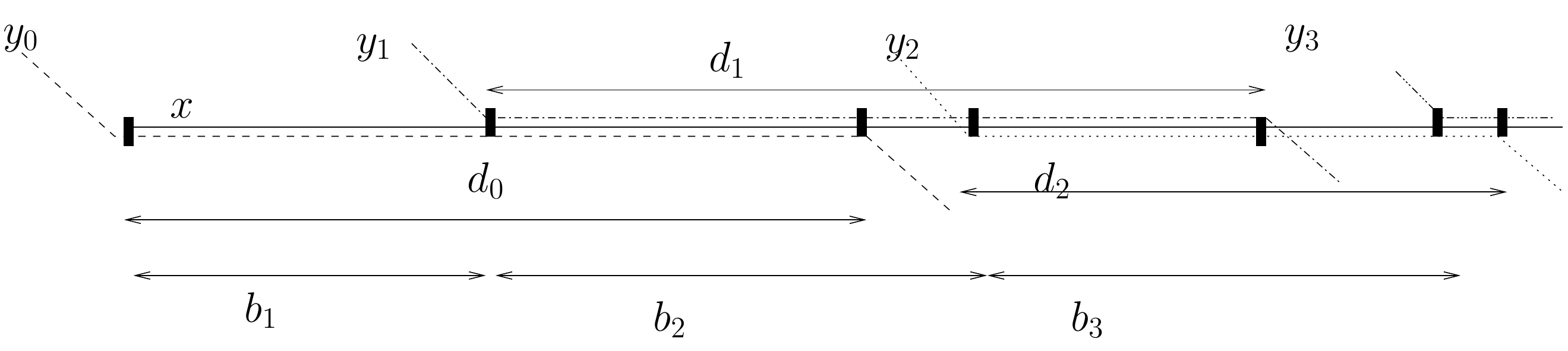}
\caption{Accidents during an excursion.}
\label{fig-excursion}
\end{center}
\end{figure}

Then by Lemma~\ref{lem-accident-bispecial}, $x_{b_{i+1}}\ldots x_{d_{i}}$ is bi-special and by Proposition~\ref{prop-bispecialfibo}, $d_i-b_{i+1} = F_{n_{i+1}}-2$ for some $n_{i+1}$. 

\begin{remark}\label{rem-yispe}
We emphasize that the first $d_i$ entries of $y_i$
do not form a special word. Indeed, it is neither right-special (due to
Lemma~\ref{lem-accident-bispecial}) nor left-special, because otherwise there would be an accident earlier. 
\hfill $\blacksquare$\end{remark}

If there are $M+1$ accidents (counting the first as 0), the ergodic sums  for $\varphi$ are 
\begin{eqnarray*}
(S_{b_{i+1}}\varphi)(\sigma^{B_{i}} (x) ) &=& 
 \sum_{k=0}^{b_{i+1}-1}\varphi \circ \sigma^{B_{i}+k} (x)\\
 &=& 
 \sum_{k=0}^{b_{i+1}-1} -\log \frac{d_{i}+1-k}{d_{i}-k} \\
&=& -\log \frac{d_{i}+1}{d_{i}+1-b_{i+1}} = -\log(1+  \frac{b_{i+1}}{d_i+1-b_{i+1}}),
\end{eqnarray*}
for $0\le i\le M-1$, while the ergodic sum of the tail of the excursion is

\begin{equation}
\label{equ-estiEM+1}
(S_{d_M}\varphi)(\sigma^{B_M} (x)) =  
\sum_{k=0}^{d_M-1} \varphi \circ \sigma^{B_M+k}( x) =
-\log \frac{d_M+1}{N+1}.
\end{equation}

We set $\disp \e_{i} := e^{\be \cdot (S_{b_{i}}\varphi)(\sigma^{B_{i-1}} (x) )}$ for $i=1\ldots M$ and $\disp \e_{M+1} := e^{\be \cdot (S_{d_M}\varphi)(\sigma^{B_M} (x))}$. 
Computing $C_{E}$, we can order excursions according to their number of accidents ($M+1$) and then according to the contribution of each accident. Let $E_{i}$ stand for the total contribution of all possible $\e_{i}$'s 
between accidents $i-1$ and $i$. 
Then
\begin{equation}\label{equ-defCE}
C_{E}=\sum_{M=0}^{+\8}\prod_{i=1}^{M+1}E_{i}.
\end{equation}

\subsubsection{Proof of Proposition~\ref{prop-spectral&zc}-Step 3}\label{subsubsec-computCE}
Let us now find an upper bound for $E_{i}$. 
By definition, $E_{i}$ is the sum over the possible $d_{i-1}$ and $b_{i}$ of $\e_{i}$. 

Recall $d_{i-1}-b_{i}=F_{n_{i}}-2$, so $b_{i}$ and $F_{n_{i}}$
determine $d_{i-1}$. 
The key idea is that $F_{n_{i}}$ and $F_{n_{i+1}}$ determine the possible values of $b_{i}$. 
This implies that $E_{i}$ can be written as an expression over the $F_{n_{i}}$ and $F_{n_{i+1}}$. 

\medskip
$\bullet$ For $2\le i\le M$ each $\e_{i}$ depends on $F_{n_{i}}$ and $b_{i}$. 
Let us show that  for $2\le i\le M$, $b_{i}$ depends on $n_{i}$ and $n_{i-1}$. 
Indeed, the sequence $y_{i} \in \K$ coincides for
$F_{n_{i}}-2$ initial symbols with $\rho$, and from entry $b_{i+1}$ has another
$d_{i} - b_{i+1} = F_{n_{i+1}}-2$ symbols in common with the head of $\rho$,
but differs from $x_{B_{i}+ d_i}$ at entry $d_i$, see Figure~\ref{fig-excursion}.
Thus we need to find all the values of $d_i > F_{n_i}-2$ such that
$\rho_0 \dots \rho_{d_i-1}$ ends the bi-special word $\rho_0 \dots \rho_{F_{n_{i+1}-3}}$ but is itself not bi-special.
The possible starting positions of this appearance
of $\rho_0 \dots \rho_{F_{n_{i+1}-3}}$ are the required numbers $b_{i+1}$.

\begin{lemma}\label{lem:bij}
Let us denote by $b_{i+1}(j)$, $j \ge 1$, the $j$-th value that
 $b_{i+1}$ can assume. Then 
\begin{equation}\label{equ-estibj}
b_{i+1}(j)\ge\max(F_{n_{i}}-F_{n_{i+1}},F_{n_{i}-1})+j F_{n_{i+1}-2}.
\end{equation}
\end{lemma}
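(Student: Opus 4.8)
The plan is to translate the combinatorial description of the admissible values of $b_{i+1}$ into a statement about the positions where the bi-special word $P:=\rho_0\dots\rho_{F_{n_{i+1}}-3}$ occurs as a suffix of a prefix $\rho_0\dots\rho_{d_i-1}$ that is itself not bi-special, subject to the constraint $d_i>F_{n_i}-2$ inherited from the previous accident. First I would record the structural fact, coming from Proposition~\ref{prop-bispecialfibo} and the Sturmian nature of $\K$, that the occurrences of $P$ in $\rho$ are spaced by ``return words'' whose lengths are Fibonacci numbers; concretely, between consecutive starting positions of $P$ the gap is either $F_{n_{i+1}-2}$ or $F_{n_{i+1}-1}$ (the two return words of a bi-special factor in a Sturmian sequence), and after the first occurrence the positions form an arithmetic-like progression with common increment at least $F_{n_{i+1}-2}$. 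This gives the $j F_{n_{i+1}-2}$ term in \eqref{equ-estibj}: the $j$-th admissible value is at least $j$ increments of the smallest possible return length past the first admissible one.

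Next I would pin down the first admissible value, i.e.\ the additive constant $\max(F_{n_i}-F_{n_{i+1}},F_{n_i-1})$. There are two competing lower bounds. On one hand, the constraint $d_i>F_{n_i}-2$ together with $d_i-b_{i+1}=F_{n_{i+1}}-2$ forces $b_{i+1}=d_i-(F_{n_{i+1}}-2)>F_{n_i}-F_{n_{i+1}}$, which is the first term in the max. On the other hand, Remark~\ref{rem-yispe} and Lemma~\ref{lem-accident-bispecial} say that the word $y_i$ reads up to entry $d_i$ cannot be right-special, while it does begin with the bi-special word of length $F_{n_i}-2$; the first position inside $\rho$ after $\rho_0\dots\rho_{F_{n_i}-3}$ at which $P$ can start without the resulting prefix becoming bi-special (or without contradicting that $\rho_0\dots\rho_{F_{n_i}-3}$ was the last accident) is controlled by the shorter return word of the $F_{n_i}$-scale bi-special word, which has length $F_{n_i-1}$. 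Hence $b_{i+1}\ge F_{n_i-1}$ as well, giving the second term of the max. Combining the two constraints with the progression from the first paragraph yields \eqref{equ-estibj}.

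The main obstacle I expect is the bookkeeping in the second paragraph: correctly identifying which scale of return words governs the first admissible position and showing the two lower bounds $F_{n_i}-F_{n_{i+1}}$ and $F_{n_i-1}$ are exactly the right ones, rather than some off-by-one variant. This requires a careful reading of the Rauzy-graph / Arnoux--Rauzy description cited after Proposition~\ref{prop-bispecialfibo}: the key input is that the two ``buckles'' at the bi-special node of order $k=F_m-2$ have lengths $F_{m-1}-1$ and $F_{m-2}-1$, so that the two return words of the bi-special factor of length $F_m-2$ have lengths $F_{m-1}$ and $F_{m-2}$. Once this is in hand, the increment bound ($\ge F_{n_{i+1}-2}$) and the base-case bound ($\ge\max(F_{n_i}-F_{n_{i+1}},F_{n_i-1})$) are each a short deduction, and the lemma follows by induction on $j$. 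I would keep the argument at the level of these return-word lengths rather than manipulating explicit prefixes of $\rho$, since the Sturmian combinatorics is cleaner than the raw symbol-by-symbol computation.
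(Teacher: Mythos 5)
Your plan has the right ingredients (occurrences of the bi-special word $P=\rho_0\dots\rho_{F_{n_{i+1}}-3}$, return lengths $F_{n_{i+1}-2}$ and $F_{n_{i+1}-1}$, the non-special constraint from Lemma~\ref{lem-accident-bispecial} and Remark~\ref{rem-yispe}), but as structured it proves a statement that is one return-gap weaker than \eqref{equ-estibj}. Your two constraints only place the \emph{first admissible} value of $b_{i+1}$ at $\ge\max(F_{n_i}-F_{n_{i+1}},F_{n_i-1})$; combined with ``consecutive occurrences differ by at least $F_{n_{i+1}-2}$'' this gives $b_{i+1}(j)\ge\max(\cdot)+(j-1)F_{n_{i+1}-2}$, whereas the lemma demands $+jF_{n_{i+1}-2}$ already for $j=1$ (note that your phrase ``$j$ increments past the first admissible one'' is self-contradictory at $j=1$). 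The missing idea, which is exactly what the paper's proof supplies, is to locate the \emph{zeroth, excluded} occurrence: the paper computes the exact starting position $b_{i+1}(0)$ of the first appearance of $L_{n_{i+1}}$ compatible with $d_i\ge F_{n_i}-2$ (a three-case formula, $F_{n_i}-F_{n_{i+1}}$, $F_{n_i}-F_{n_{i+1}-1}$ or $F_{n_{i+1}-1}$, according to the sign and parity of $n_i-n_{i+1}$), checks that each of these is $\ge\max(F_{n_i}-F_{n_{i+1}},F_{n_i-1})$, and then shows that for this occurrence the prefix $\rho_0\dots\rho_{d_i-1}$ would be right-special, contradicting Lemma~\ref{lem-accident-bispecial}; hence one must wait for the \emph{next} appearance, which is at least $F_{n_{i+1}-2}$ later. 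That exclusion step is what produces the extra $+F_{n_{i+1}-2}$ at $j=1$, and your plan does not contain it.

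A second, related problem is your justification of the $F_{n_i-1}$ term. By your own (correct) return-word lengths, the shorter return word of the bi-special factor of length $F_{n_i}-2$ has length $F_{n_i-2}$, not $F_{n_i-1}$, so the sentence deriving $b_{i+1}\ge F_{n_i-1}$ from ``the shorter return word of the $F_{n_i}$-scale bi-special word'' is internally inconsistent. Moreover, in the regime where $F_{n_i-1}$ is the binding term of the max (essentially $n_{i+1}\ge n_i-1$), the correct source of the bound is not the return structure of $L_{n_i}$ at all: when $n_{i+1}\ge n_i$ it is the second occurrence of $L_{n_{i+1}}$ itself, at position $F_{n_{i+1}-1}\ge F_{n_i-1}$, and when $n_{i+1}=n_i-1$ it is the parity case $F_{n_i}-F_{n_{i+1}-1}=F_{n_i-1}$. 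So the case analysis on $n_{i+1}$ versus $n_i$ (which you hoped to avoid by staying at the level of return words) cannot be skipped; it is where both the constant and the exclusion of $b_{i+1}(0)$ are actually established.
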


This will allow us to find an upper bound for $E_{i}$ for $1\le i\le M-1$
later in this section. 

\begin{proof}
We abbreviate the bi-special words $L_k = \rho_0 \dots \rho_{F_k-3}$
for $k \ge 4$. 
For the smallest value $d_i \ge F_{n_i}-2$ so that
$\rho_0 \dots \rho_{d_i-1}$ ends in (but is not identical to) a block $L_{n_{i+1}}$,
this block starts at entry:
$$
b_{i+1}(0) =
\begin{cases}
F_{n_i}-F_{n_{i+1}} &\text{ if } n_{i+1} < n_i \text{ and } n_i-n_{i+1} \text{ is even,}\\
F_{n_i}-F_{n_{i+1}-1} &\text{ if } n_{i+1} < n_i  \text{ and } n_i-n_{i+1} \text{ is odd,}\\
F_{n_{i+1}-1} &\text{ if } n_{i+1} \ge n_i.
\end{cases}
$$
However, if $n_{i+1} < n_i$ then $d_i = F_{n_i}-2$ and if 
 $n_{i+1} \ge n_i$ then $d_i = F_{n_{i+1}+1}-2$ in this case, and thus
$\rho_0 \dots \rho_{d_i-1}$ is right-special, contradicting
Lemma~\ref{lem-accident-bispecial}.
Therefore we need to wait for the next appearance of $L_{n_{i+1}}$.
For the Rauzy graph of the Fibonacci shift, the bi-special word $L_k$
is the single node connecting loops of length $F_{k-1}$ and $F_{k-2}$,
see \cite[Section 1]{arnoux-rauzy}. Therefore the gap between two 
appearances of $L_k$ is always $F_{k-2}$ or $F_{k-1}$.
This gives $b_{i+1}(j+1) \ge b_{i+1}(j) + F_{n_{i+1}-2}$ for all $j \ge 0$
and \eqref{equ-estibj} follows.
\end{proof}

$\bullet$ For $i=1$, formula \eqref{equ-estibj} can be applied, if we introduce the quantity $n_{0}$, coinciding with the overlap of the end of the previous 
``fictitious'' word, say $y_{-1}$. The point is that $y_{0}$ is the ``beginning'' of the excursion, thus the first accident. Then, $F_{n_{0}}\le N$ and $F_{n_{1}}>N$, which yields $n_{0}<n_{1}$. Therefore $b_{1}=F_{n_{0}}-2+\disp \frac{j}\ga(F_{n_{1}}-2)$ with $j\ge 0$. 

$\bullet$ The estimation 
$$
E_{M+1} = \sum_{d \ge 1} e^{-\be \log(\frac{F_{N_m}+d}{N+1})}
= \sum_{d \ge 1} \left( \frac{F_{N_m}+d}{N+1} \right)^{-\be}
\le \frac{N+1}{\be-1} \left( \frac{F_{N_m}}{N+1} \right)^{1-\be}
$$ 
follows from by \eqref{equ-estiEM+1}, with $d_M=F_{n_M}+d$ and $d\ge 1$. 

\medskip
Recall that within excursions, all $F_{n_{j}} \ge N+1$
for all $j$, where $N$ can be chosen as large as we want.
By Binet's formula $F_n = \frac{1}{\sqrt{5}}(\ga^{n+1}-(-1/\ga)^{n+1})$, 
we can replace $F_{n}$ by $\disp\frac{\ga^{n+1}}{\sqrt 5}$, and  
we can also treat the quantities $-1$ as negligible compared to $\ga^{n_i}$.  
Therefore, assuming that $\beta > 1$, we obtain
\begin{eqnarray*}
E_{i} &=& \sum_{j \ge 1} 
e^{\disp-\beta  \log\left(1 +  \frac{ \max(F_{n_{i}}-F_{n_{i+1}},F_{n_{i}-1}) + j F_{n_{i+1}-2}}{F_{n_{i+1}}-1}\right)} \\
&\simeq& \sum_{j \ge 1} \left(1+\max(\ga^{n_{i} - n_{i+1}}-1,\ga^{n_{i}-n_{i+1}-1} )+ j/\ga^2\right)^{-\beta}\\
&\le& \frac{\ga^2}{\beta-1} \left(1+\max(\ga^{n_{i} - n_{i+1}}-1,\ga^{n_{i}-n_{i+1}-1} )\right)^{1-\beta}.
\end{eqnarray*} 
for $2\le i\le M$

Let $P\approx \disp \frac{\log\frac{N}{\sqrt5}}{\log\ga}$ be the largest integer $n$ such that $F_{n}\le N$. Then \eqref{equ-defCE} yields 
\begin{align}\label{eq:est0}
\nonumber C_{E}\le & \sum_{M=0}^{+\8}\left(\frac{\ga^2}{\be-1}\right)^{M}\frac{(N+1)}{\be-1} \ \cdot \\
& \sum_{\stackrel{n_{1},\ldots n_{M}> P}{n_{0}\le P}} E_{1}\left(1+\max(\ga^{n_{i} - n_{i+1}}-1,\ga^{n_{i}-n_{i+1}-1} )\right)^{1-\beta}\ga^{(P-n_{M})(\be-1)}.
\end{align}

\subsubsection{Proof of Proposition~\ref{prop-spectral&zc}-Step 4}\label{subsubsec-CEsmall}
We show that $C_{E}\to0$ as $\be\to+\8$.

\begin{proposition}
\label{prop-Cezero}
There exists $A=A(\be)\in (0,1)$ with $\lim_{\be\to+\8} A = 0$ such that 
$$
C_{E}\le 2P\, \frac{N+1}{\be-1}\sum_{n=1}^{+\8}\ga^{-n(\be-1)}\sum_{M=0}^{+\8}A^{M}\sum_{i=0}^{M}\frac{n^{i}}{i!}.
$$
\end{proposition}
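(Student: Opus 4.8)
The plan is to start from the bound \eqref{eq:est0} and first strip off the two ``boundary'' ingredients. The sum in \eqref{eq:est0} runs over $n_0\le P$ and $n_1,\dots,n_M>P$; the only place $n_0$ enters is the leftover factor $E_1=(1+\max(\ga^{n_0-n_1}-1,\ga^{n_0-n_1-1}))^{1-\be}$, which is $\le1$ (indeed $n_0<n_1$), so summing over the at most $P$ admissible values of $n_0$ costs only a factor $\le P$, while the prefactor $\frac{N+1}{\be-1}$ and the anchor term $\ga^{(P-n_M)(\be-1)}$ are already displayed in \eqref{eq:est0}. All the multiplicative $(1+o(1))$ corrections produced on the way --- from Binet's formula $F_n=\frac1{\sqrt5}(\ga^{n+1}-(-1/\ga)^{n+1})$, and from replacing $1+\max(\ga^{m}-1,\ga^{m-1})$ by $\ga^{m}$ for $m\ge0$ --- form a convergent product (all indices involved are $\ge P$), bounded by $2$ once $N$ is large enough; this accounts for the factor $2$ in $2P$.

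Next I would put $n_M=P+n$ with $n\ge1$, so that $\ga^{(P-n_M)(\be-1)}=\ga^{-n(\be-1)}$; this produces the sum $\sum_{n\ge1}\ga^{-n(\be-1)}$, and it remains, for each fixed $M$ and $n$, to bound
$$
\sum_{n_1,\dots,n_{M-1}>P}\ \prod_{i=1}^{M-1}\bigl(1+\max(\ga^{n_i-n_{i+1}}-1,\ga^{n_i-n_{i+1}-1})\bigr)^{1-\be},\qquad n_M:=P+n.
$$
Each factor is $\le1$, and a short case-check (using $\be>1$ and $\ga^{-1}=\ga-1$) shows that for a descending step $n_i>n_{i+1}$ the factor is $\le\ga^{(n_i-n_{i+1})(1-\be)}=q^{\,n_i-n_{i+1}}$ with $q:=\ga^{-(\be-1)}\in(0,1)$, whereas for a non-descending step it is merely $\le1=q^{\,0}$. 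Writing $\ell_i:=n_i-P\ge1$ and $\ell_M:=n$, the displayed sum is therefore
$$
\le\ \sum_{\ell_1,\dots,\ell_{M-1}\ge1}q^{\,\sum_{i=1}^{M-1}\max(\ell_i-\ell_{i+1},\,0)} .
$$

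I would evaluate this last sum by induction on $M$, summing $\ell_1,\ell_2,\dots,\ell_{M-1}$ in this order, towards the anchor $\ell_M=n$. The first summation gives $\sum_{\ell_1\ge1}q^{\max(\ell_1-\ell_2,0)}=\ell_2+\frac{q}{1-q}$, a polynomial of degree $1$ in $\ell_2$; and in general $\sum_{\ell_k\ge1}q^{\max(\ell_k-\ell_{k+1},0)}P(\ell_k)$ for a polynomial $P$ of degree $d$ splits into the finite sum $\sum_{\ell_k\le\ell_{k+1}}P(\ell_k)$ --- a polynomial of degree $d+1$ in $\ell_{k+1}$ whose leading coefficient is $\frac1{d+1}$ times that of $P$ --- plus a convergent tail $\sum_{\ell_k>\ell_{k+1}}q^{\ell_k-\ell_{k+1}}P(\ell_k)$, a polynomial of degree $d$ in $\ell_{k+1}$ whose coefficients are those of $P$ multiplied by factors $O(\tfrac{q}{1-q})$. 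The convergence of every summation (the divergence $\ell_k\to\8$ is always killed by $q^{\ell_k-\ell_{k+1}}$, which is active exactly when $\ell_k>\ell_{k+1}$) and the fact that $q\to0$ as $\be\to+\8$ are what make this go through. After all $M-1$ summations one gets a polynomial in $n$ of degree $M-1$ with leading term $\frac{n^{M-1}}{(M-1)!}$, bounded by $C(\be)^{M}\sum_{i=0}^{M-1}\frac{n^{i}}{i!}$ with $C(\be)\le2$ as soon as $q\le\frac12$. Reinstating the prefactor $\left(\frac{\ga^2}{\be-1}\right)^{M}$ from \eqref{eq:est0}, the per-$M$ contribution is $\le A^{M}\sum_{i=0}^{M}\frac{n^i}{i!}$ with $A=A(\be):=\frac{C(\be)\ga^2}{\be-1}$, which lies in $(0,1)$ for $\be$ large and tends to $0$ as $\be\to+\8$; summing over $M$ and $n$ and multiplying by $2P\,\frac{N+1}{\be-1}$ yields the assertion.

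The main obstacle is the induction of the previous paragraph. Although in the chosen summation order each individual sum converges for free, one must keep track simultaneously of the polynomial degree growing in the running variable and of the constants picked up at each step, and prove that the latter stay uniformly bounded as $\be\to\8$, so that the vanishing prefactor $(\ga^2/(\be-1))^M$ can swallow them and leave an $A^M$ with $A\to0$; the bookkeeping has to be organized carefully because consecutive $n_i$'s are coupled and ``ascending runs'' of the chain carry no decay in the freshly summed variable. Everything else --- the Binet reductions, the elementary per-step bounds on the factors, and the treatment of $E_1$ and $E_{M+1}$ --- is routine once this is set up.
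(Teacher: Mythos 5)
Your overall strategy is the paper's: start from \eqref{eq:est0}, anchor the chain at $n_M$ through the factor $\ga^{(P-n_M)(\be-1)}$, bound the sum over $n_1,\dots,n_{M-1}$ by an expression of the form $K^{M}\sum_{i\le M}(n_M-P)^i/i!$ by summing the chain one variable at a time, and absorb everything else into $A=O(1/\be)$. The genuinely different (and rather clean) ingredient is your pointwise reduction of each factor to $q^{\max(\ell_i-\ell_{i+1},0)}$ with $q=\ga^{-(\be-1)}$ (correct: for a descending step $1+\max(\ga^m-1,\ga^{m-1})\ge\ga^m$), followed by purely discrete summation; the paper instead compares with integrals and organizes the bookkeeping through Lemma~\ref{lem-gamma-n} and the matrix domination Lemma~\ref{lem-matrix}, packaged as Proposition~\ref{prop-calcul-matrix-majo}. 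Note, however, that this chain estimate is precisely Proposition~\ref{prop-calcul-matrix-majo}, proved immediately before the present statement, so the step you yourself call ``the main obstacle'' could simply be quoted; as written, your proposal asserts rather than proves the uniform-in-$M$ control of the per-step constants, and the claimed $C(\be)\le 2$ for $q\le\tfrac12$ is optimistic: the discrete sums $\sum_{\ell\le L}\ell^{i}\le (L+1)^{i+1}/(i+1)$ and the tails $\sum_{m\ge1}q^m(L+m)^i$ produce per-step factors that are bounded but not that small — which is harmless, since all that is needed is boundedness in $\be$ so that $A=C(\be)\ga^2/(\be-1)\to0$. Two smaller points: $E_1$ is not a single factor $\le 1$ but the sum over the admissible first positions $b_1(j)$ of Lemma~\ref{lem:bij}, hence only bounded by $1+O(\ga^2/(\be-1))$ — this merely perturbs the constant in front, and the paper's own accounting of the factor $2P$ is equally loose — and the degenerate cases $M=0,1$, where the chain of summations is empty, deserve a word.
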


Before proving this proposition, we show that is finishes the proof of Proposition~\ref{prop-spectral&zc}. 
The series has only positive terms. Clearly, $\disp \sum_{M=0}^{+\8}A^{M}\sum_{i=0}^{M}\frac{n^{i}}{i!}\le \frac1{1-A}e^{n}$, so the main sum converges if
$\ga^{\be-1}>e$. Thus Proposition~\ref{prop-Cezero} implies that $C_{E}\to 0$ as $\be\to+\8$. 

Therefore, inequality \eqref{equ1-upperboundCL0} shows that if $\be\to+\8$, 
then $\l_{0,\be}\to0$ too, and hence Proposition~\ref{prop-spectral&zc} is proved. 

The rest of this subsection is then devoted to the proof of Proposition~\ref{prop-Cezero}. 
\begin{lemma}
\label{lem-gamma-n}
Let $\eta$ and $y$ be positive real numbers. Then for every $n$, 
$$\int_{y}^{\8}x^{n}e^{-\eta (x-y)}dx=\sum_{j=0}^{n}\frac{n!}{j!}\frac{y^{j}}{\eta^{n+1-j}}.$$
\end{lemma}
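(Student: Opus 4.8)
The plan is to prove this by a direct computation, integrating by parts repeatedly (or equivalently recognising the integral as a repeated derivative of the geometric-type integral). First I would substitute $u = x-y$ to rewrite the left-hand side as $\int_0^\infty (u+y)^n e^{-\eta u}\,du$, which removes the awkward dependence of the lower limit of integration on $y$ while keeping $y$ only in the polynomial factor. Expanding $(u+y)^n = \sum_{k=0}^n \binom{n}{k} u^k y^{n-k}$ by the binomial theorem, the problem reduces to the well-known Gamma-integral identity $\int_0^\infty u^k e^{-\eta u}\,du = k!/\eta^{k+1}$, valid for $\eta>0$ and $k \ge 0$.

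Carrying this out, I would write
\begin{eqnarray*}
\int_y^\infty x^n e^{-\eta(x-y)}\,dx
&=& \int_0^\infty (u+y)^n e^{-\eta u}\,du
= \sum_{k=0}^n \binom{n}{k} y^{n-k} \int_0^\infty u^k e^{-\eta u}\,du\\
&=& \sum_{k=0}^n \binom{n}{k} y^{n-k} \frac{k!}{\eta^{k+1}}
= \sum_{k=0}^n \frac{n!}{(n-k)!} \frac{y^{n-k}}{\eta^{k+1}},
\end{eqnarray*}
and then reindex with $j = n-k$ (so $k = n-j$ and $k+1 = n+1-j$) to obtain exactly $\sum_{j=0}^n \frac{n!}{j!}\frac{y^j}{\eta^{n+1-j}}$, as claimed. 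The identity $\int_0^\infty u^k e^{-\eta u}\,du = k!/\eta^{k+1}$ itself can be taken as standard (it is $\Gamma(k+1)/\eta^{k+1}$ after the scaling $v = \eta u$), or proved by an easy induction on $k$ using integration by parts, the base case $k=0$ being $\int_0^\infty e^{-\eta u}\,du = 1/\eta$.

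There is essentially no obstacle here: the only points requiring a word of justification are the convergence of all the integrals (immediate since $\eta>0$ forces exponential decay that dominates any polynomial) and the interchange of the finite sum with the integral (trivial, as the sum is finite). The result is purely a bookkeeping exercise in the binomial theorem and the elementary Gamma integral, so I would present it compactly along the lines above.
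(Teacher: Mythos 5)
Your proof is correct. It begins exactly as the paper does, with the substitution $u=x-y$ turning the integral into $\int_0^\infty (u+y)^n e^{-\eta u}\,du$, but from there the two arguments diverge slightly: the paper performs a single integration by parts on this integral to obtain the recursion $u_n=\frac{y^n}{\eta}+\frac{n}{\eta}u_{n-1}$ and then concludes by induction on $n$, whereas you expand $(u+y)^n$ by the binomial theorem and invoke the standard Gamma-integral identity $\int_0^\infty u^k e^{-\eta u}\,du=k!/\eta^{k+1}$, finishing with the reindexing $j=n-k$, which I checked gives precisely $\sum_{j=0}^n \frac{n!}{j!}\frac{y^j}{\eta^{n+1-j}}$. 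The two routes are of essentially equal length and rigor; yours makes the closed form appear in one pass and pushes the inductive content into the (standard, or separately inducted) Gamma integral, while the paper's recursion-plus-induction keeps everything self-contained in two lines of integration by parts. Either is perfectly acceptable, and your remarks on convergence and on exchanging the finite sum with the integral cover the only points that need saying.
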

\begin{proof}
Set $u_{n}:=\disp \int_{y}^{\8}x^{n}e^{-\eta (x-y)}dx$. 
Then
\begin{eqnarray*}
u_{n}&=& \int_{0}^{\8}(x+y)^{n}e^{-\eta x}dx\\
&=&\left[\frac{-1}{\eta}(x+y)^{n}e^{-\eta x}\right]_{0}^{\8}+\frac{n}\eta\int_{0}^{\8}(x+y)^{n-1}e^{-\eta x}\\
&=& \frac{y^{n}}\eta+\frac{n}\eta u_{n-1}.
\end{eqnarray*}
The formula follows by induction.
\end{proof}

Let $n$ be some positive integer and $\xi$ and $\zeta$ two positive real numbers. We consider a matrix $D_{n} = (d_{n,i,j})_{i=1, j=1}^{n+1, n}$ with $n+1$ rows and $n$ columns defined by 
$$
d_{n,i,j}:= \begin{cases}
 \frac{(j-1)!}{(i-1)!}\zeta^{j-i+1} & \text{ if } i \le j,\\[1mm]
 \frac\xi{j} & \text{ if }i=j+1,\\[1mm]
 0 &\text{ if }i>j+1.
\end{cases}
$$
or in other words:
$$
D_n=\left(\begin{array}{cc cc ccc}
0!\zeta & 1!\zeta^2 & 2!\zeta^3 & \ldots & (j-1)!\zeta^j & \ldots &   (n-1)!\zeta^{n} \\
\xi & \zeta & \ldots &  &  &  & (n-2)!\zeta^{n-1} \\
0 & \frac\xi2 & \zeta &  &  &  & \vdots \\
0 & 0 & \frac\xi3 & \ddots & \frac{(j-1)!}{(i-1)!}\zeta^{j-i+1} & &  \vdots\\
\vdots &  & 0 & \ddots & \ddots &  & \vdots\\
\vdots &  & & 0 & \frac{\xi}{j} & \zeta & \zeta^2 \\
0 &  &  &  & 0 & \frac{\xi}{n-1} & \zeta \\
0 & 0 & \ldots & \ldots & 0 & 0 & \frac{\xi}{n} 
\end{array}\right).
$$
We call $\w$ non-negative (and write $\w\succeq 0$) if all a entries of $\w$ are non-negative. This defines a partial ordering on vectors by 
$$\w' \succeq \w \iff \w'-\w\succeq 0.$$  

\begin{lemma}
\label{lem-matrix}
Assume $0<\zeta<1$ and 
set $K:=\frac1{1-\zeta}$. Then, for every $n$,
$$
D_n \cdot \left(\begin{array}{c}\disp\frac{K^{n-1}}{0!} \\[1mm]
\disp\frac{K^{n-1}}{1!} \\[1mm]
\disp\frac{K^{n-1}}{2!} \\[1mm]
\vdots \\[1mm]
\disp\frac{K^{n-1}}{(n-1)!}\end{array}\right)
\preceq
\left(\begin{array}{c}\disp\frac{K^{n}}{0!} \\[1mm]
\disp\frac{K^{n}}{1!} \\[1mm]
\disp\frac{K^{n}}{2!} \\[1mm]
\vdots \\[1mm]
\disp\frac{K^{n}}{n!}\end{array}\right).$$
\end{lemma}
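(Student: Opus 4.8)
The plan is to prove the vector inequality row by row: for each $i = 1, \dots, n+1$, I must show that the $i$-th entry of $D_n \cdot (K^{n-1}/0!, \dots, K^{n-1}/(n-1)!)^T$ is at most $K^n/(i-1)!$. I would treat the last row $i = n+1$ separately, since it has only the single nonzero entry $\xi/n$ in column $n$; there the left side is $\frac{\xi}{n}\cdot \frac{K^{n-1}}{(n-1)!} = \frac{\xi K^{n-1}}{n!}$, which is $\le \frac{K^n}{n!}$ provided $\xi \le K$, a condition that should hold in the intended application (and which I will either assume as a hypothesis or derive from $0<\zeta<1$ together with whatever bound on $\xi$ is available; in any case the cleanest route is to note $K = 1/(1-\zeta) > 1$ and that $\xi$ is a fixed constant absorbed into the regime $\beta \to \infty$). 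For a general row $2 \le i \le n$, the left side has one ``subdiagonal'' contribution $\frac{\xi}{i-1}\cdot\frac{K^{n-1}}{(i-2)!}$ coming from column $j = i-1$, plus the ``upper triangular'' contributions $\sum_{j \ge i} \frac{(j-1)!}{(i-1)!}\zeta^{j-i+1}\cdot \frac{K^{n-1}}{(j-1)!} = \frac{K^{n-1}}{(i-1)!}\sum_{j\ge i}\zeta^{j-i+1}$.

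The geometric sum $\sum_{j=i}^{n}\zeta^{j-i+1} = \zeta \cdot \frac{1-\zeta^{n-i+1}}{1-\zeta} < \frac{\zeta}{1-\zeta} = K\zeta = K - 1$, using $K = 1/(1-\zeta)$ so that $K\zeta = K-1$. Hence the upper-triangular part of row $i$ is strictly less than $\frac{K^{n-1}}{(i-1)!}(K-1) = \frac{K^n}{(i-1)!} - \frac{K^{n-1}}{(i-1)!}$. It then remains to absorb the subdiagonal term: we need
$$
\frac{\xi}{i-1}\cdot\frac{K^{n-1}}{(i-2)!} \;\le\; \frac{K^{n-1}}{(i-1)!},
$$
i.e. $\xi \le 1$, or more precisely $\xi \cdot \frac{(i-2)!\,(i-1)}{(i-1)!} = \xi \le 1$; one sees the factor $\frac{1}{i-1}\cdot\frac{1}{(i-2)!} = \frac{1}{(i-1)!}$ already matches the target exactly, so the subdiagonal contribution is $\le \frac{K^{n-1}}{(i-1)!}$ precisely when $\xi \le 1$. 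Adding the two bounds gives row $i$ is $< \frac{K^n}{(i-1)!} - \frac{K^{n-1}}{(i-1)!} + \frac{K^{n-1}}{(i-1)!} = \frac{K^n}{(i-1)!}$, as desired. The first row $i = 1$ has no subdiagonal term, so the upper-triangular estimate alone gives $< \frac{K^n}{0!} - \frac{K^{n-1}}{0!} < K^n$, which is even stronger.

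The only genuine obstacle is bookkeeping the constant $\xi$: the argument above needs $\xi \le 1$ (for the interior rows) and $\xi \le K$ (for the last row), and $K>1$ makes the second weaker than the first. So the lemma as I would state it carries the hypothesis $\xi \le 1$ (or one notes that in the application $\xi$ can be taken $<1$ once $\beta$ is large, since the relevant $\xi$ there is a decaying function of $\beta$); under that hypothesis the proof is the two-line geometric-series computation plus the factorial matching, done uniformly in $n$. I would present it as: fix $n$, check row $i=1$, check rows $2 \le i \le n$ via the split above, check row $n+1$ directly, and conclude the componentwise inequality, hence $D_n \mathbf{v}_{n-1} \preceq \mathbf{v}_n$ in the stated partial order.
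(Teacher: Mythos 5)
Your proof is correct and takes essentially the same route as the paper: both arguments go row by row, using the factorial identity $\frac{1}{i-1}\cdot\frac{1}{(i-2)!}=\frac{1}{(i-1)!}$ for the subdiagonal term and the geometric bound $1+\zeta+\zeta^{2}+\cdots\le \frac{1}{1-\zeta}=K$ for the upper-triangular part, so that row $i$ is at most $\frac{K^{n-1}}{(i-1)!}\cdot K=\frac{K^{n}}{(i-1)!}$. Your observation about the constant $\xi$ is apt: the paper's own computation silently takes $\xi=1$ (which is the value occurring in the application in Proposition~\ref{prop-calcul-matrix-majo}), so the hypothesis $\xi\le 1$ you flag is indeed what makes the statement literally correct for the interior rows.
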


\begin{proof}
This is just a computation. For the first row we get 
$$
\sum_{j=1}^{n}(j-1)!\zeta^{j}.\frac{K^{n-1}}{(j-1)!}\le K^{n-1}.\frac\zeta{1-\zeta}\le K^{n}.$$
For row $i>1$ we get 
$$\frac1{(i-1)}\frac{K^{n-1}}{(i-2)!}+\sum_{j=i}^{n}\frac{(j-1)!}{(i-1)!}\zeta^{j-i+1}\frac{K^{n-1}}{(j-1)!}=\frac{K^{n-1}}{(i-1)!}\left(1+\zeta+\zeta^{2}\ldots\right)\le \frac{K^{n}}{(i-1)!}.$$
\end{proof}

\begin{proposition}
\label{prop-calcul-matrix-majo}
Set $\zeta:=\disp\frac1{(\be-1)\log\ga}$ and $K=\frac{1}{1-\zeta}$. 
Consider $M$ integers $n_{1},\ldots n_{M}$, with $n_{M}> P$. Then, 
for every $M\ge 2$, 
$$
\sum_{n_1, \dots, n_{M-1} > P} \prod_{i=1}^{M-1} \left(1+\max(\ga^{n_{i} - n_{i+1}}-1,\ga^{n_{i}-n_{i+1}-1} ) \right)^{1-\beta}\le K^{M-1}\sum_{i=0}^{M-1}\frac{(n_{M}-P)^{i}}{i!}$$
\end{proposition}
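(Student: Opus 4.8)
The plan is to prove this by induction on $M$, using Lemma~\ref{lem-matrix} to perform the inductive step. The essential idea is that the inner sum over $n_{M-1}$ — for fixed $n_M$ — can be split according to whether $n_{M-1} \le n_M$ or $n_{M-1} > n_M$, and in each regime the factor $\left(1+\max(\ga^{n_{M-1}-n_M}-1, \ga^{n_{M-1}-n_M-1})\right)^{1-\beta}$ is comparable to a geometric term in $|n_{M-1}-n_M|$ with ratio $\zeta$-related data. More precisely, I would first establish the scalar estimate that for fixed $m$,
\begin{equation}\label{eq:scalarstep}
\sum_{n > P}\left(1+\max(\ga^{n-m}-1,\ga^{n-m-1})\right)^{1-\beta}\left(\text{given term in }(m-P)\right) \le K \cdot \left(\text{corresponding term in }(n-P)\right),
\end{equation}
which is exactly what the rows of the matrix $D_n$ in Lemma~\ref{lem-matrix} encode: the sum over $n_{M-1} > n_M$ produces, after the substitution $\ga^{n_{M-1}-n_M} \approx$ an integration variable and an application of Lemma~\ref{lem-gamma-n}, the upper-triangular part of $D_n$ acting on the vector $(K^{M-2}/j!)_j$; the finitely many terms with $P < n_{M-1} \le n_M$ contribute the sub-diagonal entries $\xi/j$. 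The key algebraic fact is that $\zeta = \frac1{(\beta-1)\log\ga}$ is precisely the value of the geometric ratio coming from $\sum_{k\ge 1}\ga^{-k(\beta-1)} \le \sum_{k\ge 1}e^{-k(\beta-1)\log\ga}$, compared against $\int_0^\infty e^{-(\beta-1)(\log\ga)x}\,dx = \zeta$, so that the tail sums are dominated by the corresponding integrals handled in Lemma~\ref{lem-gamma-n}.

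Concretely the induction runs as follows. For the base case $M = 2$ we must show $\sum_{n_1 > P}\left(1+\max(\ga^{n_1-n_2}-1,\ga^{n_1-n_2-1})\right)^{1-\beta} \le K\sum_{i=0}^{1}\frac{(n_2-P)^i}{i!} = K(1 + (n_2 - P))$; this is the first-row-plus-subdiagonal estimate of Lemma~\ref{lem-matrix} with $n = 1$, after bounding the polynomially-many ``small-gap'' terms ($n_1 \le n_2$) by $(n_2-P)+1$ and the geometric ``large-gap'' tail ($n_1 > n_2$) by $\frac{\zeta}{1-\zeta} \le K$. For the inductive step, assume the bound for $M-1$; then
\begin{align*}
\sum_{n_1,\dots,n_{M-1}>P}\prod_{i=1}^{M-1}(\cdots)^{1-\beta}
&= \sum_{n_{M-1}>P}(\cdots)^{1-\beta}\left(\sum_{n_1,\dots,n_{M-2}>P}\prod_{i=1}^{M-2}(\cdots)^{1-\beta}\right)\\
&\le K^{M-2}\sum_{n_{M-1}>P}\left(1+\max(\ga^{n_{M-1}-n_M}-1,\ga^{n_{M-1}-n_M-1})\right)^{1-\beta}\sum_{i=0}^{M-2}\frac{(n_{M-1}-P)^i}{i!},
\end{align*}
and the remaining sum over $n_{M-1}$ is bounded, via Lemma~\ref{lem-matrix} (with $n = M-1$, reading off row by row and noting the vector $(K^{M-2}/i!)_i$ on the right) and Lemma~\ref{lem-gamma-n} to control the geometric tail by the matching integral, by $K\sum_{i=0}^{M-1}\frac{(n_M-P)^i}{i!}$. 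Multiplying the two powers of $K$ gives $K^{M-1}$, closing the induction.

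The main obstacle I anticipate is making the passage from the discrete sums $\sum_{n>P}\ga^{-(n-m)(\beta-1)}(\text{polynomial in }n-P)$ to the continuous integrals of Lemma~\ref{lem-gamma-n} fully rigorous while tracking the constant: one must check that replacing $F_n$ by $\ga^{n+1}/\sqrt5$ and dropping the ``$-1$'' terms (as licensed by the Binet estimates established just before \eqref{eq:est0}) does not spoil the clean ratio $\zeta$, and that the split into "small gap" versus "large gap" aligns exactly with the sub-diagonal versus upper-triangular structure of $D_n$. In other words, the content is entirely in verifying that the combinatorial bookkeeping of \eqref{equ-estibj} feeds precisely into the matrix $D_n$ of Lemma~\ref{lem-matrix}; once that identification is made, Lemmas~\ref{lem-gamma-n} and~\ref{lem-matrix} do the rest mechanically.
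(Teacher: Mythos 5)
Your proposal is correct and follows essentially the same route as the paper: induction on $M$, splitting the innermost sum over $n_{M-1}$ at $n_M$ into a polynomial (small-gap) part and an exponentially decaying (large-gap) tail compared with the integrals of Lemma~\ref{lem-gamma-n}, with the coefficient bookkeeping handled by the matrix $D_n$ and Lemma~\ref{lem-matrix} applied to the vector $(K^{M-2}/i!)_i$. The only cosmetic difference is that the paper phrases the induction with general coefficients $a_j$ before invoking Lemma~\ref{lem-matrix}, whereas you substitute the explicit vector directly; the substance is identical.
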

\begin{proof}
Note that 
\begin{align*}
\sum_{n_1, \dots, n_{M-1} > P} & \prod_{i=1}^M \left(1+\max(\ga^{n_{i} - n_{i+1}}-1,\ga^{n_{i}-n_{i+1}-1} )\right)^{1-\beta}\\
= & \sum_{n_{M-1}=1}^{\8}\left(\ldots\left(\sum_{n_{2}=1}^{\8}\left(\sum_{n_{1}=1}^{\8}\right.\right.\right.
\left(1+\max(\ga^{n_{1} - n_{2}}-1,\ga^{n_{1}-n_{2}-1} ))^{1-\be}\right)\cdot\\
& \left.\left((1+\max(\ga^{n_{2} - n_{3}}-1,\ga^{n_{2}-n_{3}-1} ))^{1-\be}\right)\ldots\right) \\
& \left(1+\max(\ga^{n_{M-1} - n_{M}}-1,\ga^{n_{M-1}-n_{M}-1} \right)^{1-\be}.
\end{align*}
This means that we can proceed by induction. Now 
\begin{align*}
\sum_{n_{1}=P+1}^{\8} & (1+\max(\ga^{n_{1} - n_{2}}-1,\ga^{n_{1}-n_{2}-1} ))^{1-\be}\\
&\le\int_{P}^{n_{2}} (1+\ga^{x-n_{2}-1} ))^{1-\be}dx
+\int_{n_{2}}^{\8} (\ga^{x - n_{2}})^{1-\be}dx\\
&\le n_{2}-P+\int_{n_{2}}^{\8}e^{-(\be-1)\,(x-n_{2})\,\log\ga}\,dx\\
&= n_{2}-P+\int_{n_{2}}^{\8}e^{-\frac{x-n_{2}}{\zeta}}\,dx,
\end{align*}
because $\zeta=\frac1{(\be-1)\log\ga}$. This shows that the result holds for $M=2$.

Assuming that the sum for $M=p$ is of the form 
$\sum_{j=0}^{p-1}a_{j}(n_{p}-P)^{j}$, we compute the sum for $M=p+1$.
\begin{align*}
\sum_{n_{p}=P+1}^{\8} & \sum_{j=0}^{p-1}  a_{j}\frac{(n_{p}-P)^{j}}{(1+\max(\ga^{n_{p}-n_{p+1}}-1,\ga^{n_{p}-n_{p+1}-1} ))^{\be-1}} \\ 
&\le\ \sum_{j}a_{j}\int_{P}^{n_{p+1}}\frac{(x-P)^{j}}{(1+\ga^{x-n_{p+1}-1})^{\be-1}}\,dx+
\sum_{j}a_{j}\int_{n_{p+1}}^{\8}\frac{(x-P)^{j}}{(\ga^{x-n_{p+1}})^{\be-1}}\,dx\\
&\le \sum_{j}\frac{a_{j}(n_{p+1}-P)^{j+1}}{(j+1)}+\int_{n_{p+1}}^{\8}(x-P)^{j}e^{-\frac{x-n_{p+1}}{\zeta}}\,dx.
\end{align*}
Set $\disp \w\cdot\w'=\sum w_{i}w'_{i},$
for vectors $\w=(w_{1},\ldots, w_{p+1})$ and $\w'=(w'_{1},\ldots, w'_{p+1})$. 
Lemma~\ref{lem-gamma-n} yields 
 \begin{align*}
\sum_{n_{p}=P+1}^{\8} & \sum_{j=0}^{p-1} a_{j}\frac{(n_{p}-P)^{j}}{(1+\max(\ga^{n_{p}-n_{p+1}}-1,\ga^{n_{p}-n_{p+1}-1}))^{\be-1}}\\
&\le \sum_{j}\frac{a_{j}}{(j+1)}(n_{p+1}-P)^{j+1} + 
\sum_{i=0}^{j}\frac{j!}{i!}\zeta^{j-i+1}(n_{p+1}-P)^{i}\\
&\le D_{p}\left(\begin{array}{c}a_0 \\a_1 \\ \vdots \\ a_{p-1}
\end{array}\right) \cdot 
\left(\begin{array}{c}1 \\n_{p+1} \\\vdots \\n_{p+1}^{p}\end{array}\right).
\end{align*}
Lemma~\ref{lem-matrix} concludes the proof of the induction. 
\end{proof}

\begin{proof}[Proof of Proposition~\ref{prop-Cezero}]
We have just proven that 
\begin{align*}
\sum_{n_{1},\ldots n_{M}>P} & \left(1+\max(\ga^{n_{i} - n_{i+1}}-1,\ga^{n_{i}-n_{i+1}-1} )\right)^{1-\beta}\ga^{(P-n_{M})(\be-1)} \\
& \le K^{M-1}\sum_{n_{M}=P+1}^{+\8}\sum_{j=0}^{M-1}\frac{(n_{M}-P)^{j}}{j!}\ga^{(n_{M}-P)(\be-1)}.
\end{align*}

It remains to sum over $n_{0}$. Note that in that case, there are only $P$ terms of the form 
$\disp \sum_{j=0}^{+\8}\frac1{\left(1+\ga^{n_{0}-n_{1}-2}+\frac{j}{\ga}\right)^{\be}}$ because $n_{0}\le P<n_{1}$ for each possible $n_{0}$, 
\begin{eqnarray*}
\disp \sum_{j=0}^{+\8}\frac1{\left(1+\ga^{n_{0}-n_{1}-1}+\frac{j}{\ga^2}\right)^{\be}}&=&\frac1{\left(1+\ga^{n_{0}-n_{1}-2}\right)^{\be}}+\disp \sum_{j=1}^{+\8}\frac1{\left(1+\ga^{n_{0}-n_{1}-2}+\frac{j}{\ga^2}\right)^{\be}}\\
&\le & 1+\frac{\ga^2}{\be-1}\frac1{\left(1+\ga^{n_{0}-n_{1}-2}\right)^{\be-1}}\\
&\le & 1+\frac{\be-1}2,
\end{eqnarray*}
for $\be \ge \sqrt{2} \ga$.
 Finally, inequality \eqref{eq:est0} yields 
$$C_{E}\leq 2P\, \frac{N+1}{\be-1}\sum_{M=0}^{+\8}A^{M}\sum_{n=1}^{+\8}\ga^{n(1-\be)}\sum_{j=0}^{M-1}\frac{n^{j}}{j!},$$
with $\disp A:=\frac{\ga}{\be-1}K=\frac{\ga}{\be-1-\frac1{\log\ga}}$. This tends to $0$ as $\be \to +\8$. 
\end{proof}

\subsection{End of the proof of Theorem~\ref{theo-pt}}

\subsubsection{End of the case $-\log\frac{n+1}n$}
Proposition~\ref{prop-spectral&zc} shows that there exists some minimal $\be_{0}$ such that $\l_{0,\be}<1$ for every $\be>\be_{0}$. 
This also shows that $\CP(\be)=0$ for $\be>\be_{0}$. 
Since $\CP(\be)$ is a continuous and convex function, it is constant for 
$\be>\be_{0}$. As $\CP(0)=\log2$, there exists 
a minimal $\be_{c} > 0$ such that $\CP(\be)>0$ for every $0\le \be<\be_{c}$. 
Clearly, $\be_{c}\le \be_{0}$. 

We claim that for $\be<\be_{c}$, there exists a unique equilibrium state and that it has full support. Indeed, there exists at least one equilibrium state, say $\mu_{\be}$, and at least one cylinder, say $J$, has positive $\mu_{\be}$-measure. 
Therefore, we can induce on this cylinder, and the form of potential (see \cite[Theorem 4]{leplaideur-synth}) shows that there exists a unique local equilibrium state. It is a local Gibbs measure and therefore $\mu_{\be}$ is uniquely determined on each cylinder, and unique  and with full support (due to the mixing property). 

We claim that the pressure function $\CP(\be)$ is analytic on $[0,\be_{c}]$. 
Indeed, each cylinder $J$ has positive $\mu_{\be}$-measure and the associated $Z_{c}(\be)$ is the pressure of the dotted system (that is: restricted to the trajectories that avoid $J$). This set of trajectories has a pressure strictly smaller than $\CP(\be)$ because otherwise, several equilibrium states would coexist. 
Therefore $\CP(\be)$ is determined by the implicit equation $\l_{\CP(\be),\be}=1$ and $\CP(\be)>Z_{c}(\be)$ for $\be \in [0,\be_{c}]$. 
The Implicit Function Theorem shows that $\CP(\be)$ is analytic. 
 
For $\be\ge \be_{c}$, the pressure $\CP(\be)=0$ and for cylinders $J$ as above, we have $Z_{c}(\be)\ge 0$. This shows that $Z_{c}(\be)=0$ for every $\be\ge \be_{c}$. Due to the form of the potential, $\l_{0,\be}$ is continuous and decreasing  in $\be$. 

We claim that $\be_{c}=\be_{0}$. 
Indeed, assume by contradiction $\be<\be_{c}$. Then $\l_{0,\be_{c}}>1$, since  otherwise (because $\l_{0,\be}$ being strictly decreasing in $\be$), 
$\l_{0,\be_{c}}\le 1$ would yield that $\l_{0,\be}<1$ for every $\be>\be_{c}$.
This would imply $\be_{c} \ge \be_{0}$ (recall that $\be_{0}$ is minimal with this property). 
Now, for fixed $\be$, $Z\mapsto \l_{Z,\be}$ is continuous and strictly decreasing and goes to $0$ at $Z\to+\8$. Therefore, if $\l_{0,\be_{c}}>1$ then there exists $Z>0$ such that $\l_{Z,\be_{c}}=1$. The local equilibrium state for this $Z$ generates
 some $\s$-invariant probability measure\footnote{Since $Z_{c}(\be_{c})=\CP(\be_{c})=0<Z$, the expectation of the return time is comparable to $\left|\frac{\partial\CL_{Z,\be_{c}}(\BBone_{J})}{\partial Z}\right|$, which converges.} with pressure for $\be \varphi$ equal to $Z$, thus positive, and this is in contradiction with $\CP(\be_{c})=0$. 
This proves that $\be_{c}=\be_{0}$ and finishes the proof of Theorem~\ref{theo-pt} in the case that $V(x) = -\log \frac{n+1}{n}$. 

\subsection{The general case \boldmath $V\in \CX_{1}$ \unboldmath}
For $V \in \CX_1$, there exists $\kappa>0$ such that 
$$-V \le \kappa\varphi.$$
 This shows that the pressure function is constant equal to zero for $\be\ge \be_{0}/\kappa$. Again, the pressure is convex, thus non-increasing and continuous. We can define $\be'_{c}$ such that $\CP(\be)>0$ for$0\le\be\le \be'_{c}$ and $\CP(\be)=0$ for $\be\ge \be'_{c}$. 
 
 The rest of the argument is relatively similar to the previous discussion. We deduce that for $\be<\be'_{c}$, there exists a unique equilibrium state, it has full support and $\CP(\be)$ is analytic on this interval. For $\be\ge \be'_{c}$, it is not clear that $\l_{0,\be}$ decreases in $\be$. However,
we do not really need this argument, because if $\l_{0,\be}>1$, then the decrease of $Z\mapsto \l_{Z,\be}$ (which follows from convexity argument and $\disp\lim_{Z\to+\8}\l_{Z,\be}=0)$, is sufficient to produce a contradiction.

\bibliographystyle{plain}
\bibliography{mabibliohenk2}

Faculty of Mathematics\\
University of Vienna\\
Oskar Morgensternplatz 1, A-1090 Vienna\\ 
Austria\\
\texttt{henk.bruin@univie.ac.at}\\
\texttt{http://www.mat.univie.ac.at/$\sim$bruin}
\\[3mm]
D\'epartement de  Math\'ematiques\\
Universit\'e de Brest\\
6, avenue Victor Le Gorgeu\\
C.S. 93837, France \\
\texttt{Renaud.Leplaideur@univ-brest.fr}\\
\texttt{http://www.math.univ-brest.fr/perso/renaud.leplaideur}

\end{document}